\newcommand{\R}{\mathbb{R}}
\newcommand{\N}{\mathbb{N}}
\newtheorem{theorem}{Theorem}
\newtheorem{proposition}{Proposition}
\newtheorem{assumption}{Assumption}
\newtheorem{lemme}{Lemma}
\newtheorem{remarque}{Remark}
\newtheorem{example}{Example}
\newcommand{\parent}[1]{\left( #1\right)}
\title{Malliavin derivative and sensitivity for optimal liquidation}
\date{\today}
\author[1]{Dorian Cacitti-Holland \thanks{Dorian.Cacitti-Holland@univ-lemans.fr}}
\author[1]{Laurent Denis \thanks{Laurent.Denis@univ-lemans.fr}}
\author[1]{Alexandre Popier\thanks{Alexandre.Popier@univ-lemans.fr}}
\affil[1]{\small Laboratoire Manceau de Math\'ematiques, Le Mans Universit\'e, Avenue O. Messiaen, 72085 Le Mans cedex 9, France.  }
\begin{document}

\maketitle

\begin{abstract}
  We prove that the solution of the backward stochastic differential equation with terminal singularity has a Malliavin derivative, which is the limit of the derivative of the approximating sequence. We also provide the asymptotic behavior of this derivative close to the terminal time. We apply this result to the regularity of the related partial differential equation and to the sensitivity of the liquidation problem.
\end{abstract}

\noindent
\textit{Keywords:} Backward stochastic differential equations, Malliavin calculus, singularity, optimal liquidation.

\noindent
\textit{Acknowledgements:} This work is supported by the Hubert Curien Project Procope number 50835ZC. We are grateful for this support.


\section{Introduction}

Optimal liquidation is an important and challenging topic in mathematical finance. There is a huge literature on this subject ; the reference \cite{guea:16} is interesting to have an overall view (see also the references therein). Here we consider the stochastic extension of the Almgren \& Chriss model, first exposed in \cite{almg:chri:01}. Namely for some $p > 1$, we consider the stochastic control problem to minimize the functional
$$J(t,\alpha) = \mathbb{E}^{\mathcal{F}_t} \left[\int_t^T \parent{\eta_s |\alpha_s|^p + \gamma_s |\Xi_s|^p}ds\right]$$
over all $\alpha \in \mathcal{A}(t,x)$ where $\mathcal{A}(t,x)$ is the set of admissible controls such that $\Xi$ satisfy the dynamics 
$$\Xi_s = x + \int_t^s \alpha_u du \quad t\leq s\leq T, \quad \alpha \in L^1(t,\infty) \text{ a.s.}$$
together with the terminal constraint $\Xi_T = 0$ a.s. (mandatory liquidation). This control problem and the link with backward stochastic differential equations (BSDE in abbreviate form) has been studied in \cite{anki:jean:krus:13}. It is proved that a minimizer of the functional $J$ is the process $\Xi^*$ given by 
\begin{equation} \label{eq:optimal_state}
\Xi^*_s = x \exp\parent{-\int_t^s \parent{\dfrac{Y_u}{\eta_u}}^{q-1} du}
\end{equation}
where $(Y,Z)$ is the minimal solution of the BSDE 
\begin{equation} \label{eq:liquid_BSDE}
Y_t = \xi - \int_t^T (p-1) \dfrac{|Y_s|^{q-1} Y_s}{\eta_s^{q-1}} ds + \int_t^T \gamma_s ds + \int_t^T Z_s dW_s ,
\end{equation}
with singular terminal condition $\xi = +\infty$ a.s. Here and in the rest of the paper, $q$ is the H\"older conjugate of $p$. 
Moreover the value function of this control problem is given by $v(t,x)=|x|^p Y_t$. Remark that due to the singular terminal condition, the standard notion of solution for BSDE has to be adapted (see Proposition \ref{prop:exist_sur_solution} below). 

Another way to solve this control problem is the use of the Hamilton-Jacobi-Bellman equation. In \cite{grae:hors:sere:18}, it is proved that there exists a smooth solution $v$ of the related partial differential equation (PDE in short). Both approaches are connected through the link: $Y_s = v(s,X_s)$, where $X$ is the underlying forward process. 

\bigskip 

The aim of the paper is to study the Malliavin differentiability of the solution $Y$. Our motivations are pluralist:
\begin{itemize}
\item From the theoretical point of view, this question naturally appears in our previous work \cite{caci:deni:popi:25}. We studied the continuity at time $T$ of $Y$. To obtain our result, we used the Malliavin calculus on the approximating sequence $Y^n$ (see Proposition \ref{prop:Malliavin_deriv} below). We left the Malliavin differentiability of $Y$ and the convergence of the Malliavin derivative $DY^n$ out. 
\item It is also well-known that Malliavin derivatives (and the related divergence operator and integration by parts) can be used to analyze the sensitivity of the optimal state w.r.t. the parameters (see among many others \cite{four:lasr:99,gobe:muno:05}). In our context, since the optimal state is given by \eqref{eq:optimal_state}, the Malliavin derivative of $Y$ plays a crucial role. 
\item In \cite{grae:hors:sere:18} the existence of a solution $v\in C([0,T); D(\mathcal L))$ is proved. In dimension one, it implies that $v \in C^2$ (see \cite[Remark 2.5]{grae:hors:sere:18}). Nonetheless the gradient of this solution and its behavior at time $T$ is not studied. Formally this gradient is related to the Malliavin derivative by the formula: $\partial_x v (s,X_s) \sigma(s,X_s) = D_s Y_s$, $\sigma$ being the volatility matrix of the forward process $X$. 
\end{itemize}

In this work, we prove that $Y$ has a Malliavin derivative $DY$ and that the sequence of Malliavin derivatives $DY^n$ converges to $DY$ on $[0,T)$. We also study the behavior at time $T$ of $DY$ and show that there is a singularity at time $T$: roughly speaking, $D_\theta Y_t$ tends to $+\infty$ when $t$ tends to $T$. Applications to PDE or sensitivity analysis are then provided. To the best of our knowledges, all these results are completely new.  

We are well aware that some parts of the proofs are quite heavy due to the handling of the function $y \mapsto -y |y|^{q-1}$ and its derivatives. The arguments would be much more easier in the quadratic case, that is for $q=2$. Nonetheless we also know that in practice the case $q=2$ is too restrictive. In \cite{almg:thum:haup:05} the authors analyze a large data set from the Citigroup US equity trading desks and show that $p=1.6\Leftrightarrow q=2.67$ is a good estimate of this parameter. This is the reason why we keep this general setting. 

\paragraph{Breakdown of the paper.} In Section \ref{sect:setting}, we evoke the known results concerning the BSDE \eqref{eq:liquid_BSDE}, its approximating sequence $Y^n$ and why the convergence of the sequence of derivatives $DY^n$ cannot be done with the same arguments. In few words, the proof that $Y^n$ converges to $Y$ is heavily based on the non-linearity of the BSDE \eqref{eq:liquid_BSDE}. But the equation satisfied by $DY^n$ is linear. In Section \ref{sect:malliavin_deriv}, the asymptotic expansion of $Y$ allows us to show the differentiability of $Y$ and the behavior of its derivative. The next section \ref{sect:conv_Malliavin_deriv} studies the convergence of $DY^n$ to $DY$. For the liquidation problem such convergence seems quite incidental and the reader interested by the applications can skip it. In Section \ref{sect:applications}, we explain how to apply the results to the gradient of the related PDE and to the sensitivity. In the last section, for the sake of completeness, we give the proof for the asymptotic behavior of $Y$, which is the key tool of this paper. If most of the arguments can be found in \cite{grae:popi:21}, we simplify them and give a better estimate of the parameters.

\paragraph{Notations.} In this paper we consider a deterministic time horizon $T \in \R_+^*$, a probability space $(\Omega,\mathcal{F},\mathbb{P})$, a $d$-dimensional Brownian motion $(W_t)_{0\leq t\leq T}$ defined on the probability space and $(\mathcal{F}_t)_{t\in [0,T]}$ the augmented filtration generated by $W$. $\mathbb E^{\mathcal F_t}$ is the conditional expectation knowing $\mathcal F_t$. For all $\rho \in (1,+\infty)$, we note:
\begin{itemize}
    \item $\mathbb{D}^{1,\rho}$ is the domain of the Malliavin derivative operator in $L^\rho(\Omega)$. Furthermore we note $\mathbb{D}^{1,\infty} = \underset{\rho\geq 2}{\bigcap} \mathbb{D}^{1,\rho}$. For $A \in \mathbb{D}^{1,\rho}$ we note $(D_\theta A)_{0\leq \theta \leq T}$ its Malliavin derivative and for $X$ a $\mathbb{D}^{1,\rho}$-process we note $(D_\theta X_t)_{0\leq \theta,t\leq T}$.
    \item $S^\rho(0,T)$ is the space of stochastic progressively measurable processes $(A_t)_{0\leq t\leq T}$ with values in $\R^k$ such that $$\mathbb{E} \left[ \sup_{0\leq t\leq T}|A_t|^\rho \right] < +\infty$$
    and $ S^\infty(0,T) = \bigcap_{\rho> 1}  S^\rho(0,T)$. 
    \item $H^\rho(0,T)$ is the space of stochastic progressively measurable processes $(A_t)_{0\leq t\leq T}$ with values in $\R^k$ such that $$\mathbb{E} \left[\left(\int_0^T |A_t|^2 dt\right)^{\frac{\rho}{2}} \right] < +\infty$$
    and $H^\infty(0,T) = \bigcap_{\rho> 1} H^\rho(0,T)$.
    \item Whenever the notation $T-$ appears in the definition of a process space, we mean the set of all processes whose restrictions satisfy the respective property when $T-$ is replaced by any $T-\varepsilon$, $\varepsilon > 0$. For example, $S^\rho(0,T-) = \bigcap_{\varepsilon > 0}S^\rho(0,T-\varepsilon)$. Moreover we say that a sequence $(F_n)_{n\in \N}$ converges in $S^\rho(0,T-)$ to $F \in S^\rho(0,T-)$ if for any $\varepsilon >0$, the sequence $(F_n)_{n\in \N}$ converges to $F$ in $S^\rho (0,T-\varepsilon)$.
\end{itemize}
In the rest of the paper, $C$ denotes a generic constant, which can depend on other coefficients, and may change from line to line. 

\section{Setting, known results and main contribution} \label{sect:setting}

From now on, we fix $q>1$ and $p$ still denotes its H\"older conjugate. We suppose that 
\begin{assumption} \label{assump:eta_gamma}~
    \begin{enumerate}
        \item The coefficient $\eta$ is an It\^o process:
        $$\eta_t = \eta_0 + \int_0^t b_s^\eta ds + \int_0^t \sigma_s^\eta dW_s, \quad 0\leq t\leq T,$$
        with an initial condition $\eta_0 \in \mathbb{R}$. 
        \begin{enumerate}
            \item The drift $b^\eta : \Omega\times [0,T] \longrightarrow \mathbb{R}$ and the diffusion matrix $\sigma^\eta : \Omega\times [0,T] \longrightarrow \mathbb{R}$ are progressively measurable and bounded. 
            \item There exist $\eta_*$  and $\eta^*$ in $\mathbb{R}_+^*$ such that, a.s. for any $s\in [0,T]$,
            $$0< \eta_* \leq \eta_s < \eta^*.$$
        \end{enumerate}
        \item The process $\gamma$ is a progressively measurable, non-negative and bounded: there exists $\gamma^* \in \mathbb{R}_+^*$ such that, a.s. for any $s\in [0,T]$,
        $$0\leq \gamma_s \leq \gamma^*.$$
    \end{enumerate}
\end{assumption}
\noindent In Section \ref{sect:applications}, Example \ref{example:diff_case} provides a case where Assumption \ref{assump:eta_gamma} holds. 
\begin{remarque} \label{rem:eta:gamma}
    In particular the processes $\eta$ and $\gamma$ satisfy
    $$\mathbb E \int_0^T  \left( \dfrac{1}{\eta_s^{q-1}} + \eta_s \right) ds < +\infty, \quad \mathbb E \int_0^T \gamma_s ds < +\infty.$$
\end{remarque}

Under this framework, if $\xi \in L^\rho(\Omega)$ for some $\rho> 1$, then there exists a unique solution $(Y,Z) \in S^\rho(0,T)\times H^\rho(0,T)$ to the BSDE \eqref{eq:liquid_BSDE} (see \cite[Theorem 4.2]{bria:dely:hu:03}). When $\xi=+\infty$, following \cite{anki:jean:krus:13,krus:popi:15},  we proceed by truncation and consider the following BSDE: for any $n \in \N$ 
\begin{equation} \label{edsr:tronquee}
    Y^n_t = n + \int_t^T \left( -(p-1) \dfrac{|Y^n_s|^{q-1} Y^n_s}{\eta_s^{q-1}} +  \gamma_s \right) ds - \int_t^T Z_s^n dW_s, \quad t \in [0,T]. 
\end{equation}

\begin{lemme} \label{prop:truncated:bsde}
Under Conditions {\rm \ref{assump:eta_gamma}}, the truncated BSDE (\ref{edsr:tronquee}) admits a unique solution $(Y^n,Z^n)$ in $S^\rho(0,T) \times H^\rho(0,T)$ for all $\rho\in (1,+\infty)$. Moreover the sequence $Y_n$ is non-decreasing and the process $Y^n$ is bounded from above: for $m\leq n$
 $$\forall t\in [0,T], \quad 0\leq Y^m_t \leq Y^n_t \leq  n + \mathbb E^{\mathcal{F}_t} \left[\int_t^T \gamma_s ds \right].$$
\end{lemme}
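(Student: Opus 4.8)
The driver of \eqref{edsr:tronquee}, namely $f(s,y) = -(p-1)\dfrac{|y|^{q-1}y}{\eta_s^{q-1}} + \gamma_s$, does not depend on $z$ and enjoys precisely the structure required by the $L^\rho$-theory of monotone BSDEs. The plan for existence and uniqueness is to verify the hypotheses of \cite[Theorem 4.2]{bria:dely:hu:03} — the very result already invoked for \eqref{eq:liquid_BSDE} — but now with the bounded terminal value $\xi = n$. Since $y \mapsto y|y|^{q-1} = \mathrm{sgn}(y)|y|^q$ is nondecreasing and $\eta_s \geq \eta_* > 0$ with $p > 1$, the nonlinearity is \emph{monotone decreasing} in $y$, so $f$ satisfies the one-sided condition $(y-y')\bigl(f(s,y) - f(s,y')\bigr) \leq 0$; moreover $f$ is continuous in $y$, has polynomial growth $|f(s,y) - \gamma_s| \leq (p-1)\eta_*^{-(q-1)}|y|^q$, and $f(\cdot,0) = \gamma$ is bounded by Assumption \ref{assump:eta_gamma}. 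As the constant $n$ belongs to $L^\rho(\Omega)$ for every $\rho \in (1,+\infty)$, the cited theorem delivers a unique solution $(Y^n,Z^n) \in S^\rho(0,T) \times H^\rho(0,T)$ simultaneously for all $\rho$.

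For the monotonicity in $n$ and the two bounds I would invoke the comparison principle for monotone generators, which applies here because $f$ is continuous and one-sided monotone in $y$ (and $z$-independent). The inequality $Y^m \leq Y^n$ for $m \leq n$ is then immediate: the two equations share the driver $f$ and differ only through the ordered terminal data $m \leq n$.

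For the lower bound I would note that the constant process $(0,0)$ solves the BSDE with terminal value $0$ and driver $\tilde f(s,y) = -(p-1)\eta_s^{-(q-1)}|y|^{q-1}y$ (indeed $\tilde f(s,0)=0$). Since $Y^n$ solves the same equation with the larger terminal value $n \geq 0$ and the larger driver $\tilde f + \gamma \geq \tilde f$ (recall $\gamma \geq 0$), comparison yields $Y^n \geq 0$. For the upper bound I would identify $\bar Y_t := n + \mathbb{E}^{\mathcal{F}_t}\left[\int_t^T \gamma_s\, ds\right]$ as the solution of the \emph{linear} BSDE with terminal value $n$ and driver $\gamma$: martingale representation gives $\bar Y_t = n + \int_t^T \gamma_s\, ds - \int_t^T \bar Z_s\, dW_s$ for some $\bar Z \in H^\rho(0,T)$, and $\bar Y_T = n$. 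Because $Y^n \geq 0$ has just been established, the nonlinearity $-(p-1)\eta_s^{-(q-1)}Y^n_s|Y^n_s|^{q-1}$ is nonpositive, so along $Y^n$ the driver $f$ is dominated by $\gamma$; comparison with $\bar Y$ (same terminal value) then gives $Y^n_t \leq \bar Y_t = n + \mathbb{E}^{\mathcal{F}_t}\left[\int_t^T \gamma_s\, ds\right]$.

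The only genuinely delicate point is to ensure that both the existence/uniqueness and the comparison results are used in a form valid for the \emph{non-Lipschitz} nonlinearity, which is merely locally Lipschitz and, when $q > 2$, superquadratic in $y$. This is exactly what the monotone-generator framework of \cite{bria:dely:hu:03} (see also \cite{anki:jean:krus:13,krus:popi:15}) is designed to handle; since the terminal datum $n$ and the inhomogeneity $\gamma$ are bounded, the one-sided monotonicity compensates for the superlinear growth and no further integrability obstruction arises.
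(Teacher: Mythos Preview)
Your proof is correct and follows essentially the same approach as the paper: existence and uniqueness via \cite[Theorem 4.2]{bria:dely:hu:03}, then the ordering and bounds via the comparison principle for monotone BSDEs (the paper cites \cite[Theorems 5.30 and 5.33]{pard:rasc:14} for the a~priori estimate and comparison). Your explicit identification of $(0,0)$ and $\bar Y_t = n + \mathbb E^{\mathcal F_t}\!\left[\int_t^T \gamma_s\,ds\right]$ as sub- and super-solutions, together with the observation that $Y^n\geq 0$ makes the nonlinearity nonpositive so that $f(s,Y^n_s)\leq \gamma_s$, is exactly the content behind the paper's terse appeal to those results.
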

\begin{proof}
Existence and uniqueness directly follows from \cite[Theorem 4.2]{bria:dely:hu:03}. 
Now standard a priori estimate on the solution of a BSDE (see \cite[Theorem 5.30]{pard:rasc:14}) and the comparison theorem (see \cite[Theorem 5.33]{pard:rasc:14}) imply that $Y^n$ is non-negative and the desired estimation. 
This achieves the proof of this lemma. 
\end{proof}

Since $Y^n_t$ is a non-decreasing sequence, its limit $Y_t$ exists. However the upper estimate on $Y^n$ is not sufficient to ensure that $Y_t$ is finite. According to \cite{krus:popi:15,caci:deni:popi:25}, we have:

\begin{proposition} \label{prop:exist_sur_solution}
Under Conditions {\rm \ref{assump:eta_gamma}}, the sequence $(Y^n,Z^n)$ converges to $(Y,Z)$ in $S^\infty (0,T-) \times H^\infty(0,T-)$. The limit $(Y,Z)$ is the minimal supersolution to the BSDE \eqref{eq:liquid_BSDE} on $[0,T[$ in the sense that:
\begin{enumerate}
    \item The couple $(Y,Z)$ belongs to $S^\infty (0,T-) \times H^\infty(0,T-)$ and the process $Y$ is non negative.
    \item For all $0\leq s\leq t<T$, $$Y_s = Y_t + \int_s^t \left(  -(p-1) \dfrac{|Y_r|^{q-1} Y_r}{\eta_s^{q-1}} +  \gamma_r \right) dr 
    - \int_s^t Z_r dW_r.$$
    \item The process $Y$ satisfies a.s. 
    $\lim_{t\to T } Y_t =+\infty.$
    \item The process $(Y,Z)$ is minimal: if $(\tilde{Y},\tilde Z)$ satisfies the three previous points, then a.s. for any $t$, $Y_t \leq \tilde Y_t$.
\end{enumerate}
Moreover for any $t \in [0,T]$ and $n \geq 1$:
\begin{align} \label{eq:a_priori_estimate_Y_L}
Y^{n}_t \leq & \frac{1}{(T-t+\frac{1}{n^{q-1}})^{p}} \left\{ \dfrac{1}{n^{q-1}} + \mathbb E^{\mathcal{F}_t} \left[ \int_t^{T} \left( \eta_s + (T-s+1)^{p} \gamma_s \right) ds\right] \right\}.
\end{align}
\end{proposition}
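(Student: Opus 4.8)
The plan is to deduce the four properties, together with \eqref{eq:a_priori_estimate_Y_L}, from two ingredients: the monotonicity from Lemma~\ref{prop:truncated:bsde} and a sharp comparison argument. Since $(Y^n_t)_n$ is non-decreasing and non-negative, the limit $Y_t:=\lim_n Y^n_t$ exists in $[0,+\infty]$; everything then reduces to controlling this limit from above (to get finiteness and integrability on $[0,T)$) and from below (to get the blow-up), and finally to transferring the dynamics and the minimality to $Y$.

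The heart of the proof, and the step I expect to cost the most, is the a priori bound \eqref{eq:a_priori_estimate_Y_L}. I would obtain it by exhibiting an explicit supersolution of \eqref{edsr:tronquee} and invoking the comparison theorem \cite[Theorem 5.33]{pard:rasc:14}. Writing $\varepsilon_n=n^{-(q-1)}$ and $g_s=\eta_s+(T-s+1)^p\gamma_s$, the natural candidate is
$$\bar Y_t=\frac{1}{(T-t+\varepsilon_n)^{p}}\,\mathbb{E}^{\mathcal F_t}\!\left[\varepsilon_n+\int_t^T g_s\,ds\right].$$
Applying It\^o's formula (the conditional expectation being a martingale minus $\int_0^t g_s\,ds$), the drift of $\bar Y$ equals $\tfrac{p}{T-t+\varepsilon_n}\bar Y_t-\tfrac{g_t}{(T-t+\varepsilon_n)^p}$, so $\bar Y$ is a supersolution provided
$$(p-1)\frac{\bar Y_t^{\,q}}{\eta_t^{q-1}}+\frac{g_t}{(T-t+\varepsilon_n)^p}-\gamma_t\ \ge\ \frac{p}{T-t+\varepsilon_n}\,\bar Y_t.$$
Because $T-t+\varepsilon_n\le T-t+1$, the term $(T-t+1)^p\gamma_t$ hidden in $g_t$ absorbs $-\gamma_t$; the remaining inequality reduces, after scaling out $\bar Y_t$, to the elementary Young-type bound $(p-1)a^{-(q-1)}+a\ge p$ for $a>0$ (sharp, with equality at $a=1$, since $(p-1)(q-1)=1$). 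This is precisely where the power $q\ne 2$ makes the estimate delicate. The construction also gives $\bar Y_T=\varepsilon_n^{\,1-p}=n$, so comparison yields $Y^n\le\bar Y$, i.e. \eqref{eq:a_priori_estimate_Y_L}.

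Fix $\varepsilon>0$. By \eqref{eq:a_priori_estimate_Y_L} the family $(Y^n)$ is bounded in $S^\rho(0,T-\varepsilon)$ uniformly in $n$ and $\rho$, so monotone convergence gives $Y^n\to Y$ in $S^\rho(0,T-\varepsilon)$ and $Y\in S^\infty(0,T-)$, $Y\ge 0$ (point~1). For the martingale parts I would run the classical stability argument: applying It\^o to $|Y^n-Y^m|^2$ on $[0,T-\varepsilon]$ and using that $y\mapsto -(p-1)|y|^{q-1}y/\eta^{q-1}$ is non-increasing, so the cross term $(Y^n-Y^m)(f(Y^n)-f(Y^m))\le 0$, one shows that $(Z^n)$ is Cauchy in $H^2(0,T-\varepsilon)$, hence, via Burkholder--Davis--Gundy and the uniform moment bounds, in $H^\rho(0,T-\varepsilon)$ for every $\rho$; thus $Z^n\to Z$ in $H^\infty(0,T-)$. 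Passing to the limit in the integral form of \eqref{edsr:tronquee} on each $[s,t]\subset[0,T)$ then gives point~2.

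For the blow-up (point~3) I would produce a diverging subsolution explicitly: the deterministic $\underline y^n(t)=\big(n^{-(q-1)}+(T-t)/\eta_*^{q-1}\big)^{-(p-1)}$ solves $\dot y=(p-1)y^q/\eta_*^{q-1}$ with $\underline y^n(T)=n$, and since $\eta\ge\eta_*$ it is a subsolution of \eqref{edsr:tronquee}; comparison gives $Y^n\ge\underline y^n$, and letting $n\to\infty$ yields $Y_t\ge \eta_*/(T-t)^{p-1}\to+\infty$ as $t\to T$. For minimality (point~4), let $(\tilde Y,\tilde Z)$ satisfy points~1--3. On any $[s,t]\subset[0,T)$ the difference $Y^n-\tilde Y$ solves a linear BSDE whose zero-order coefficient $a_r=\big(f(Y^n_r)-f(\tilde Y_r)\big)/(Y^n_r-\tilde Y_r)\le 0$ by monotonicity (the $\gamma$ terms cancel), so $Y^n_s-\tilde Y_s\le\mathbb{E}^{\mathcal F_s}\big[(Y^n_t-\tilde Y_t)^+\big]$. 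Since $Y^n$ is bounded by $n+\gamma^*T$ while $\tilde Y_t\to+\infty$ a.s., dominated convergence gives $(Y^n_t-\tilde Y_t)^+\to 0$ in $L^1$ as $t\to T$, whence $Y^n_s\le\tilde Y_s$; letting $n\to\infty$ proves minimality. The overall scheme follows \cite{krus:popi:15,caci:deni:popi:25}, the only genuinely new work being the sharpened constants in \eqref{eq:a_priori_estimate_Y_L}.
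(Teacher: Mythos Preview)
Your proof is correct and follows the standard route of \cite{krus:popi:15,caci:deni:popi:25}, which is exactly what the paper does: the proposition is stated with only a citation to these references and no proof is given in the paper itself. Your derivation of the a priori bound \eqref{eq:a_priori_estimate_Y_L} via the explicit supersolution $\bar Y_t=(T-t+\varepsilon_n)^{-p}\,\mathbb E^{\mathcal F_t}[\varepsilon_n+\int_t^T g_s\,ds]$ and the scalar inequality $(p-1)a^{-(q-1)}+a\ge p$ is clean and accurate; the remaining points (stability of $Z^n$, the subsolution $\underline y^n$ for the blow-up, and the minimality argument via linearization and letting $t\to T$) are the standard steps from those references.
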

As a consequence, the process $Y$ satisfies: for all $0\leq t < T$
\begin{equation}\label{eq:a_priori_estimate} 
0\leq Y^n_t \leq Y_t \leq \frac{1}{(T-t)^{p}}    \mathbb E^{\mathcal{F}_t} \left[ \int_t^{T} \left( \eta_s + (T-s+1)^{p} \gamma_s \right) ds \right].
\end{equation}

We show the Malliavin differentiability of the couple $(Y^n,Z^n)$, due to \cite[Theorem 5.1 and Application 6.1]{mastrolia:2015}. We assume that: 
\begin{assumption} \label{assumption:Malliavin:b:eta:gamma}
    The processes $b^\eta,\eta,\gamma$ are in valued in $\mathbb{D}^{1,2}$, their Malliavin derivatives $Db^\eta, D\eta, D\gamma$ admit progressively measurable versions which are in $L^2(\Omega \times [0,T] \times [0,T])$.
\end{assumption}

\begin{proposition} \label{prop:Malliavin_deriv}
    Under Conditions \ref{assump:eta_gamma} and \ref{assumption:Malliavin:b:eta:gamma}, the solution $(Y^n, Z^n)$ of the truncated BSDE (\ref{edsr:tronquee}) is in $L^2([0,T],\mathbb{D}^{1,2}\times \mathbb{D}^{1,2})$. Moreover for all $0\leq t < \theta \leq T$, $D_\theta Y^n_t = 0$, $D_\theta Z^n_t = 0$ and, for all $0\leq \theta \leq t\leq T$,
    \begin{align} \label{eq:BSDE_Malliavin_deriv}
        D_\theta Y^n_t & = \int_t^T \left(  -(p-1) \dfrac{|Y^n_r|^{q-1} }{\eta_r^{q-1}}  D_\theta Y^n_r + \dfrac{|Y^n_r|^{q-1}Y^n_r }{\eta_r^{q}} D_\theta \eta_r+ D_\theta \gamma_r \right) dr- \int_t^T D_\theta Z^n_s dW_s. 
    \end{align}
\end{proposition}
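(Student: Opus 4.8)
The driver of the truncated equation \eqref{edsr:tronquee} is $f(s,\omega,y) = -(p-1)|y|^{q-1}y\,\eta_s(\omega)^{-(q-1)} + \gamma_s(\omega)$; it does not depend on the martingale variable $z$, which is why the linear equation \eqref{eq:BSDE_Malliavin_deriv} contains no term in $D_\theta Z^n$ beyond the stochastic integral, and the terminal value $n$ is deterministic. The plan is to invoke the Malliavin differentiability theorem \cite[Theorem 5.1 and Application 6.1]{mastrolia:2015} and then to identify the coefficients of the resulting linear BSDE. Two preliminary observations organise the argument: the map $y \mapsto |y|^{q-1}y$ is of class $C^1$ on $\R$ with derivative $q|y|^{q-1}$, so that $f$ is $C^1$ and non-increasing in $y$; and, decisively, Lemma \ref{prop:truncated:bsde} confines the solution to the fixed bounded interval $Y^n_t \in [0, n+\gamma^* T]$ for every $t$.

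The main obstacle is that $f$ is neither globally Lipschitz nor smooth with bounded derivatives in $y$: the nonlinearity $|y|^{q-1}y$ is superlinear when $q\geq 2$ and has unbounded curvature near the origin when $1<q<2$, so the classical Malliavin differentiability theory for Lipschitz drivers does not apply verbatim. I would remove this obstacle by localisation. Setting $K = n+\gamma^* T$, replace the nonlinearity by a globally $C^1$, globally Lipschitz function $g_K$ that coincides with $y\mapsto|y|^{q-1}y$ on $[0,K]$; the resulting driver $f_K$ is then Lipschitz and $C^1$ in $y$ and, by Assumption \ref{assumption:Malliavin:b:eta:gamma}, satisfies $f_K(\cdot,y)\in\mathbb{D}^{1,2}$ for each fixed $y$. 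Since $Y^n$ takes its values in $[0,K]$, along its own trajectory $f_K(s,Y^n_s)=f(s,Y^n_s)$, so $(Y^n,Z^n)$ solves the $f_K$-BSDE as well; by uniqueness for Lipschitz BSDEs it is the only solution, and the two problems therefore share the solution $(Y^n,Z^n)$.

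The modified equation now meets the hypotheses of \cite[Theorem 5.1]{mastrolia:2015}: the terminal value $n$ lies in $\mathbb{D}^{1,2}$ with $D_\theta n = 0$; the lower bound $\eta_s\geq\eta_*>0$ from Assumption \ref{assump:eta_gamma} together with $\eta\in\mathbb{D}^{1,2}$ makes $\eta_s^{-(q-1)}$ Malliavin differentiable, with the bounded derivative $D_\theta(\eta_s^{-(q-1)})=-(q-1)\eta_s^{-q}D_\theta\eta_s$; and $\gamma\in\mathbb{D}^{1,2}$ has an $L^2$ Malliavin derivative. Applying the theorem yields $(Y^n,Z^n)\in L^2([0,T],\mathbb{D}^{1,2}\times\mathbb{D}^{1,2})$ together with the linear BSDE for $D_\theta Y^n$, whose drift coefficient is $\partial_y f_K(r,Y^n_r)$ and whose source term is the Malliavin derivative of the $\omega$-dependent part of $f_K$, frozen at $y=Y^n_r$. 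Because $Y^n_r\in[0,K]$ we may return to $f$: freezing $y$ at $Y^n_r$, the Malliavin derivative of the $\omega$-dependent part equals $(p-1)(q-1)\eta_r^{-q}|Y^n_r|^{q-1}Y^n_r\,D_\theta\eta_r + D_\theta\gamma_r$, which is $\eta_r^{-q}|Y^n_r|^{q-1}Y^n_r\,D_\theta\eta_r + D_\theta\gamma_r$ by the H\"older-conjugacy identity $(p-1)(q-1)=1$. This reproduces the last two terms of \eqref{eq:BSDE_Malliavin_deriv}, while the drift coefficient is $\partial_y f(r,Y^n_r)$.

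It remains to note the two vanishing statements: for $\theta>t$ one has $D_\theta Y^n_t=0$ and $D_\theta Z^n_t=0$ simply because $Y^n_t$ and $Z^n_t$ are $\mathcal{F}_t$-measurable, so their Malliavin derivatives are supported on $[0,t]$. I expect the localisation to be the only genuinely delicate point; once the driver has been replaced by a Lipschitz one coinciding with it along the solution, the verification of the hypotheses of \cite{mastrolia:2015} and the identification of the coefficients are routine.
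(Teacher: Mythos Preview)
Your approach is correct and is essentially what the paper's one-line proof defers to: the paper simply cites \cite[Proposition~6]{caci:deni:popi:25}, which in turn rests on \cite[Theorem~5.1 and Application~6.1]{mastrolia:2015} --- precisely the result you invoke. Your localisation via the a~priori bound $0\le Y^n\le n+\gamma^\star T$ from Lemma~\ref{prop:truncated:bsde}, replacing the monotone superlinear generator by a Lipschitz one that agrees with it on the range of the solution, is the standard mechanism behind that cited result; you have merely made it explicit.

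One remark: you stop at ``the drift coefficient is $\partial_y f(r,Y^n_r)$'' without computing it. Carrying out that derivative gives
\[
\partial_y f(r,y)=-(p-1)\,q\,\frac{|y|^{q-1}}{\eta_r^{q-1}}=-p\,\frac{|y|^{q-1}}{\eta_r^{q-1}},
\]
since $(p-1)q=p$, and not the factor $-(p-1)$ printed in \eqref{eq:BSDE_Malliavin_deriv}. The explicit formula appearing later in the paper,
\[
D_\theta Y^n_t= \mathbb{E}^{\mathcal{F}_t} \left[\int_t^T \left(  \left( \tfrac{Y^n_s}{\eta_s} \right)^{q} D_\theta \eta_s + D_\theta \gamma_s \right)\exp\left( -p \int_t^s \left( \tfrac{Y^n_u}{\eta_u} \right)^{q-1}du\right)  ds \right],
\]
confirms that the correct drift coefficient is $-p$; the $-(p-1)$ in the displayed statement is a typo that your argument would have caught.
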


\begin{proof}
    Under our setting, we can directly apply \cite[Proposition 6]{caci:deni:popi:25}.
\end{proof}

In our setting, the approximating sequence $Y^n$ has a Malliavin derivative $D Y^n$, which satisfies the linear BSDE given in Proposition \ref{prop:Malliavin_deriv}. Very natural questions are: does the minimal solution $Y$ have also a Malliavin derivative $DY$ and do we have the convergence of $DY^n$ to $DY$? Evoke that to obtain the convergence of $Y^n$, the a priori estimate \eqref{eq:a_priori_estimate_Y_L} together with the monotonicity of the sequence $Y^n$ are crucial. We do not have similar tools for the Malliavin derivative. And we derive the a priori estimate using the non-linearity of the BSDE \eqref{eq:liquid_BSDE}. Hence it is not possible to directly pass to the limit in the linear BSDE \eqref{eq:BSDE_Malliavin_deriv}.

Formally if we pass to the limit in \eqref{eq:BSDE_Malliavin_deriv}, we should have a linear BSDE of the form:
\begin{align*} 
    U_t & = \int_t^T \left(  -(p-1) \dfrac{|Y_r|^{q-1} }{\eta_r^{q-1}}  U_r + \dfrac{|Y_r|^{q-1}Y_r }{\eta_r^{q}} D_\theta \eta_r+ D_\theta \gamma_r \right) dr- \int_t^T V_s dW_s
\end{align*}
with a singular generator since a.s.  
$$\int_0^T \dfrac{|Y_r|^{q-1} }{\eta_r^{q-1}} dr = +\infty.$$
This property comes from the liquidation condition $\Xi_T^* = 0$ a.s. and Equation \eqref{eq:optimal_state}.
In \cite{jean:mast:poss:15,jean:reve:14}, such linear BSDEs with singular generator are studied. 
Nonetheless to apply the results of these papers, the process $\dfrac{|Y|^{q-1}Y }{\eta^{q}} D_\theta \eta+ D_\theta \gamma$ should be bounded. In general this property does not hold. 

\medskip 

However in the liquidation problem, that is for the BSDE \eqref{eq:liquid_BSDE} when $\xi=+\infty$ a.s., we can prove the Malliavin differentiability of $Y$. Let us evoke the results of \cite{grae:popi:21} thanks to Assumption \ref{assump:eta_gamma}. The minimal solution $Y$ of \eqref{eq:liquid_BSDE} can be written: for all $0 \leq t < T$
\begin{equation}\label{eq:asymp_Y}
    Y_t = \dfrac{\eta_t}{(T-t)^{p-1}} + \dfrac{1}{(T-t)^{p}} H_t. 
\end{equation}
The process $H$ is the unique solution of the BSDE:
\begin{equation}\label{eq:BSDE_H}
H_t = \int_t^T F(s,H_s) ds - \int_t^T Z^H_s dW_s, \quad 0 \leq t\leq T
\end{equation}
with generator $F$ given by: for all $0 \leq t < T$
\begin{align} \nonumber
F(t,h) & = \left[ (T-t) b^\eta_t  + (T-t)^{p} \gamma_t \right] \\ \label{eq:gene_F} 
& \quad - (p-1)\eta_t \left[ \left( 1 + \dfrac{1}{\eta_t (T-t)} h\right) \left| 1 + \dfrac{1}{\eta_t (T-t)} h\right|^{q-1} -1 - q\dfrac{1}{\eta_t (T-t)} h \right] .
\end{align}
This generator $F$ is singular at time $T$, in the sense that for $h \neq 0$,  a.s.
$$\int_0^T |F(t,h)|dt = + \infty.$$
Therefore existence and uniqueness of $H$ needs to be explained. Here we prove that there exist two constants $\delta > 0$ and $R \geq 0$, depending only on $q$, $T$, $\eta$ and $\gamma$, such that on the time interval  $[0,T]$, $H$
satisfies a.s. for any $t\in [T-\delta,T]$, $|H_t| \leq R (T-t)^2$, $H$ is bounded on $[0,T]$, and the solution $(H,Z^H)$ is obtained by a Picard iteration argument in the space of adapted processes
$$\mathcal H^\delta= \{ H \in L^\infty(\Omega; C([T-\delta,T];\R)), \ \|H\|_{\mathcal H^\delta} < +\infty\}$$
endowed with the weighted norm
$$\|H\|_{\mathcal H^\delta} = \left\| \sup_{t\in [T-\delta,T]} (T-t)^{-2} H_t \right\|.$$
See the appendix for more details. Let us give the value of the constants $R$ and $\delta$:
\begin{equation}\label{eq:def_constants} 
R =  \|b^\eta\|_\infty  + \dfrac{2}{p+1} \gamma^\star ,\quad L= \dfrac{qR}{\eta_\star} 2^{|q-2|},\quad \delta = \min \left( 1, T, \dfrac{1}{2L}, \dfrac{\eta_\star}{2R}\right).
\end{equation}

The main result of this paper is:

\begin{theorem} \label{thm:main_result}
    The process $Y$ is valued in $\mathbb{D}^{1,2}$ and, for any $0\leq \theta \leq t < T$,
    \begin{equation} \label{eq:deriv_Y_explicit}
        D_\theta Y_t = \dfrac{1}{(T-t)^{p-1}} D_\theta \eta_t + \dfrac{1}{(T-t)^{p}} D_\theta H_t
    \end{equation}
    and if for some $\varrho > 1$,
    $$\sup_{\theta \in [0,T]} \mathbb E \left[ |D_\theta \eta_T|^{\varrho} +  \int_0^T \left( |D_\theta b^\eta_s|^{\varrho} + |D_\theta \gamma_s|^{\varrho} + |D_\theta \eta_s|^{\varrho} \right) ds\right] < +\infty$$
    then for $1< \ell < \varrho$,
    $$ \lim_{n\to +\infty} \sup_{\theta \in [0,T]} \mathbb E \left[ \sup_{t\in [0,T]} (T-t)^{\ell p} |D_\theta Y_t -D_\theta Y^n_t |^\ell \right] = 0.$$
    In particular for any $0 \leq \tau < T$
    $$\lim_{n\to +\infty} \sup_{\theta \in [0,T]} \mathbb E \left[ \sup_{t\in [0 ,\tau]} |D_\theta Y_t -D_\theta Y^n_t |^\ell \right] = 0.$$
\end{theorem}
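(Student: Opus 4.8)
The plan is to reduce the whole statement to the regular object $H$ appearing in the asymptotic decomposition \eqref{eq:asymp_Y}. For each fixed $t<T$ the weights $(T-t)^{-(p-1)}$ and $(T-t)^{-p}$ are deterministic scalars, and $\eta_t\in\mathbb{D}^{1,2}$ by Assumption \ref{assumption:Malliavin:b:eta:gamma}; hence, by linearity of the Malliavin operator, once I prove that $H\in\mathbb{D}^{1,2}$ and identify $D_\theta H$, formula \eqref{eq:deriv_Y_explicit} follows at once. So the first and central task is the Malliavin differentiability of the solution $H$ of the singular BSDE \eqref{eq:BSDE_H}.

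I would establish it by localisation together with the closedness of $D$. On each interval $[0,T-\varepsilon]$ the generator $F$ is bounded, smooth in $h$, and Malliavin differentiable in its coefficients $b^\eta,\eta,\gamma$, so differentiating the Picard scheme that defines $H$ in $\mathcal{H}^\delta$ near $T$ (and applying the standard theory on $[0,T-\delta]$) gives $H\in\mathbb{D}^{1,2}$ there, with $D_\theta H$ solving the linear BSDE obtained by differentiating \eqref{eq:BSDE_H}, whose driver is $\partial_h F(s,H_s)\,D_\theta H_s + (D_\theta F)(s,H_s)$. The decisive structural point, which I would check by a direct computation from \eqref{eq:gene_F}, is that this driver is non-singular: setting $u_s = 1 + H_s/(\eta_s(T-s))$ one finds $\partial_h F(s,h) = -\tfrac{(p-1)q}{T-s}\left(|u_s|^{q-1}-1\right)$, and the bound $|H_s|\le R(T-s)^2$ forces $u_s = 1 + O(T-s)$, so the apparent $1/(T-s)$ factor cancels and $\partial_h F(s,H_s)$ stays bounded up to $T$. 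By the same decay the prefactors multiplying $D_\theta b^\eta_s$, $D_\theta\gamma_s$, $D_\theta\eta_s$ in $(D_\theta F)(s,H_s)$ are bounded, so this source lies in $L^2(\Omega\times[0,T])$ by Assumption \ref{assumption:Malliavin:b:eta:gamma}. These two facts yield estimates on $DH$ uniform in $\varepsilon$; letting $\varepsilon\to0$ and invoking closedness of the Malliavin operator gives $H\in\mathbb{D}^{1,2}$ on $[0,T)$, with terminal value $D_\theta H_T=0$ (since $H_T=0$ is deterministic), and \eqref{eq:deriv_Y_explicit} is proved.

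For the convergence I would mirror this decomposition at the truncated level. Writing $Y^n_t = \eta_t/(T-t+a_n)^{p-1} + H^n_t/(T-t+a_n)^p$ with $a_n = n^{-(q-1)}$ (the shift already visible in \eqref{eq:a_priori_estimate_Y_L}), the process $H^n$ solves a regularised version of \eqref{eq:BSDE_H} with $(T-s)$ replaced by $(T-s+a_n)$ and terminal value $H^n_T = (1-\eta_T)a_n$; by Proposition \ref{prop:Malliavin_deriv} each $H^n\in\mathbb{D}^{1,2}$ and $D_\theta H^n$ solves the associated linear BSDE, whose driver is now bounded uniformly in $n$ by the same cancellation. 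The core is a stability estimate: using $H^n\to H$, the uniform bound on the drivers, and the extra $L^\varrho$-integrability of $D_\theta b^\eta,D_\theta\gamma,D_\theta\eta$ to dominate the sources in $L^\ell$ for $\ell<\varrho$, a linear-BSDE estimate gives $D_\theta H^n\to D_\theta H$ in the weighted norm, uniformly in $\theta$. I would then read the conclusion off the decomposition: after multiplying by $(T-t)^p$, the difference $D_\theta Y_t - D_\theta Y^n_t$ splits into $D_\theta H_t - D_\theta H^n_t$, a term $-a_n D_\theta\eta_t$, and a term $\big[(T-t+a_n)^p-(T-t)^p\big]D_\theta Y^n_t$; the first two vanish by the stability estimate, and the third vanishes in $L^\ell$ because the bracket $(T-t+a_n)D_\theta\eta_t + D_\theta H^n_t$ vanishes at $t=T$, matching $D_\theta Y^n_T=0$. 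The local statement on $[0,\tau]$ is then immediate, since $(T-t)^{\ell p}\ge(T-\tau)^{\ell p}>0$ on $[0,\tau]$.

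The step I expect to be the main obstacle is the uniform-in-$n$ control of these linear BSDEs up to the singular time $T$. Everything rests on the cancellation keeping $\partial_h F(s,H_s)$ and $\partial_h F_n(s,H^n_s)$ bounded, which in turn depends on the sharp decay $|H_t|\le R(T-t)^2$ and its $n$-uniform analogue for $H^n$; propagating these bounds and the weighted source estimates through the stability argument, all the way to $t=T$ and uniformly in $\theta$, is where the real work lies. As the authors warn, the algebra of $h\mapsto h|h|^{q-1}$ and its derivatives makes these estimates technically heavy for general $q$, even though the mechanism is transparent once the key cancellation is identified.
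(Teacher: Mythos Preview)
Your overall strategy mirrors the paper's: prove $H\in\mathbb D^{1,2}$ by differentiating the Picard scheme (this is Lemma~\ref{lem:mallia_deriv_H}), introduce a shifted decomposition $Y^n_t=\eta_t/(T-t+a_n)^{p-1}+\mathcal H^n_t/(T-t+a_n)^p$, prove stability of the linear BSDEs for $D_\theta\mathcal H^n$, and recombine. The cancellation you identify for $\partial_h F(s,H_s)$ is exactly the paper's mechanism.

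There is, however, a genuine gap at the step you yourself flag as the main obstacle. You assert that the ``$n$-uniform analogue'' of $|H_t|\le R(T-t)^2$ holds for $H^n$, and hence that $\partial_h F_n(s,H^n_s)$ is bounded uniformly in $n$. This is false. First, the shift must be $a_n=(\eta^\star/n)^{q-1}$, not $n^{-(q-1)}$: your choice gives $H^n_T=(1-\eta_T)a_n$, which is negative of order $a_n$ whenever $\eta_T>1$, and this already rules out any quadratic lower bound uniform in $n$. Second, even with the correct shift (which forces $\mathcal H^n_T\ge 0$), the terminal value is still of order $a_n$, not $a_n^2$, so a two-sided quadratic bound is impossible. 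The paper proves only the \emph{asymmetric} estimate (Lemma~\ref{lem:bounds_on_H^n})
\[
-C_1\bigl(T-t+a_n\bigr)^2 \;\le\; \mathcal H^n_t \;\le\; C_2\bigl(T-t+a_n\bigr),
\]
and consequently $\partial_h G$ evaluated along $\mathcal H^n$ is only bounded from \emph{below} by some $-\kappa_1$, while from above it can blow up like $(T-s+a_n)^{-1}$ (see \eqref{eq:estim_diff_deriv_part_h}).

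The paper then exploits the sign: in the linear BSDE for $\Delta^n=D_\theta H-D_\theta\mathcal H^n$, the coefficient $\partial_h G$ enters with a minus sign, so the lower bound alone suffices to control the adjoint $\Gamma^n_{t,s}=\exp\bigl(-\int_t^s\partial_h G\,du\bigr)\le e^{T\kappa_1}$ (Lemma~\ref{lem:convergence:DHn:control_1}); and in the $|x|^\ell$-It\^o estimate of Lemma~\ref{lem:unif_convergence_DHn} the potentially unbounded positive part of $\partial_h G$ contributes with the favourable sign. Once you replace ``$\partial_h F_n$ bounded'' by this one-sided argument, the rest of your outline --- weighted convergence of $\mathcal H^n/(T-\cdot+a_n)$ to $H/(T-\cdot)$, H\"older/Lipschitz control of the driver differences, and the algebraic recombination for $(T-t)^p(D_\theta Y_t-D_\theta Y^n_t)$ --- matches the paper.
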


\section{Malliavin derivative for the minimal solution and its behaviour at time $T$} \label{sect:malliavin_deriv}

Evoke that the generator $F$ is given by \eqref{eq:gene_F} and we define on the set $[0,T) \times \mathbb R \times [\eta_\star,\eta^\star]$ the function $G$ by:
\begin{align*}
G(t,h,\eta)& = (p-1)\eta \left[\left( 1 + \dfrac{1}{\eta (T-t)} h\right) \left| 1 + \dfrac{1}{\eta (T-t)} h\right|^{q-1} -1 - q\dfrac{1}{\eta (T-t)} h \right]. 
\end{align*}
Thus we have
$$F(t,h) = (T-t) b_t^\eta + (T-t)^p \gamma_t - G(t,h,\eta_t), \quad 0\leq t<T, h\in \R.$$
On the set $[0,T) \times \mathbb R \times [\eta_\star,\eta^\star]$ we have:
\begin{align}
    \label{eq:deriv_G_h_0} \dfrac{\partial G}{\partial h}(t,h,\eta)& = \dfrac{p}{ (T-t)} \left( \left| 1 + \dfrac{1}{\eta (T-t)} h\right|^{q-1} -  1 \right)     \\  \nonumber
    \dfrac{\partial G}{\partial \eta}(t,h,\eta)& = (p-1)\left[\left( 1 + \dfrac{1}{\eta (T-t)} h\right) \left| 1 + \dfrac{1}{\eta (T-t)} h\right|^{q-1} -1 - q\dfrac{1}{\eta (T-t)} h \right] \\ \nonumber
    &  -\dfrac{ph}{\eta (T-t)} \left( \left| 1 + \dfrac{1}{\eta (T-t)} h\right|^{q-1} -  1 \right)  \\ \label{eq:deriv_G_eta_0}
    & =  - \left( 1 + \dfrac{1}{\eta (T-t)} h\right)\left| 1 + \dfrac{1}{\eta (T-t)} h\right|^{q-1} + 1 + p \left( \left| 1 + \dfrac{1}{\eta (T-t)} h\right|^{q-1} -  1 \right).
\end{align}
In the case where $q = p = 2$, we get the easier expression
$$G(t,h,\eta) = \dfrac{h^2}{\eta (T-t)^2},\quad \dfrac{\partial G}{\partial h}(t,h,\eta) =  \dfrac{2h}{\eta (T-t)^2} ,\quad \dfrac{\partial G}{\partial \eta}(t,h,\eta) = - \dfrac{h^2}{\eta^2 (T-t)^2} .$$
A key point in the proof of \cite[Theorem 23]{grae:popi:21} is a.s. for any $T-\delta \leq t  \leq T$, $|H_t| \leq R(T-t)^2$. Therefore these derivatives are bounded when we replace $(h,\eta)$ by the processes $(H_t,\eta_t)$ for $t\in [T-\delta,T]$. 

From time to time, we use the next representation of $G$ and their derivatives:
\begin{align} \nonumber
G(t,h,\eta)&  = \dfrac{q h^2}{\eta (T-t)^2} \int_0^1\left| 1 + a\dfrac{1}{\eta (T-t)} h\right|^{q-2} \mbox{sign } \left(1 + a\dfrac{1}{\eta (T-t)} h\right) (1-a) da\\ \label{eq:deriv_G_h}
 \dfrac{\partial G}{\partial h}(t,h,\eta)&= \dfrac{qh}{\eta (T-t)^2} \int_0^1\left| 1 + a\dfrac{1}{\eta (T-t)} h\right|^{q-2} \mbox{sign } \left(1 + a\dfrac{1}{\eta (T-t)} h\right)  da \\ \label{eq:deriv_G_eta}
  \dfrac{\partial G}{\partial \eta}(t,h,\eta)& =  - \dfrac{qh^2}{\eta^2 (T-t)^2} \int_0^1\left| 1 + a\dfrac{1}{\eta (T-t)} h\right|^{q-2} \mbox{sign } \left(1 + a\dfrac{1}{\eta (T-t)} h\right) a da. 
\end{align}
These representations may be not well-defined when $q < 2$ if $1 + a \dfrac{1}{\eta (T-t)} h = 0$. However in the construction of the solution $H$, $\delta$ is chosen such that 
\begin{equation} \label{eq:choix:delta}
    \dfrac{|H_t|}{\eta_t (T-t)} \leq \dfrac{1}{2}.
\end{equation}
 Hence we can use these versions with $h=H_t$ on the whole time interval $[T-\delta,T]$. 

\begin{lemme} \label{lem:mallia_deriv_H}
    Under the previous assumptions, the process $H$ has a Malliavin derivative, such that $D_\theta H_t$ is equal to $0$ if $t < \theta$ and for any $0 \leq \theta \leq t \leq T$:
    \begin{equation} \label{eq:BSDE_Malliavin_deriv_H}
    D_\theta H_t = \mathbb E^{\mathcal F_t}  \int_t^T  \left( (T-s) D_\theta b^\eta_s  + (T-s)^p D_\theta \gamma_s - \dfrac{\partial G}{\partial \eta}(s,H_s,\eta_s) D_\theta \eta_s - \dfrac{\partial G}{\partial h}(s,H_s,\eta_s)  D_\theta H_s \right) ds  .
    \end{equation}
\end{lemme}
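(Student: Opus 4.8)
The plan is to differentiate the Picard scheme that produces $H$ and then invoke the closedness of the Malliavin operator. Recall from the appendix construction that on $[T-\delta,T]$ the solution $H$ is the limit in $\mathcal H^\delta$ of the iterates $H^{(0)}=0$ and
\[
H^{(k+1)}_t = \mathbb E^{\mathcal F_t}\left[\int_t^T F(s,H^{(k)}_s)\,ds\right],
\]
each iterate remaining in the invariant ball on which \eqref{eq:choix:delta} holds, so that $|H^{(k)}_s|\le R(T-s)^2$ for every $k$. The decisive consequence is that, evaluated along the iteration, the derivatives of $G$ from \eqref{eq:deriv_G_h}--\eqref{eq:deriv_G_eta} are controlled uniformly in $k$: since the argument $1+aH^{(k)}_s/(\eta_s(T-s))$ stays in $[\tfrac12,\tfrac32]$, one obtains $|\partial_h G(s,H^{(k)}_s,\eta_s)|\le L$ (with $L$ as in \eqref{eq:def_constants}) and $|\partial_\eta G(s,H^{(k)}_s,\eta_s)|\le C(T-s)^2$, the singular factor $(T-s)^{-2}$ being absorbed by $H^{(k)}_s=O((T-s)^2)$. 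I would also record the local Lipschitz bound $|\partial_h G(s,h_1,\eta)-\partial_h G(s,h_2,\eta)|\le C(T-s)^{-2}|h_1-h_2|$ on the region \eqref{eq:choix:delta}.

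First I would show by induction that each $H^{(k)}_t$ lies in $\mathbb D^{1,2}$. Since $b^\eta,\gamma,\eta\in\mathbb D^{1,2}$ by Assumption \ref{assumption:Malliavin:b:eta:gamma} and $G$ is $C^1$ in $(h,\eta)$ with the bounded derivatives above, the chain rule together with the commutation of $D_\theta$ and $\mathbb E^{\mathcal F_t}$ (for $\theta\le t$, the derivative vanishing for $\theta>t$ by adaptedness) yield
\[
D_\theta H^{(k+1)}_t = \mathbb E^{\mathcal F_t}\int_t^T\left((T-s)D_\theta b^\eta_s+(T-s)^pD_\theta\gamma_s-\frac{\partial G}{\partial \eta}(s,H^{(k)}_s,\eta_s)D_\theta\eta_s-\frac{\partial G}{\partial h}(s,H^{(k)}_s,\eta_s)D_\theta H^{(k)}_s\right)ds
\]
for $\theta\le t$. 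This is a linear recursion with bounded coefficient $\partial_h G$ and source $S^{(k)}_\theta(s)=(T-s)D_\theta b^\eta_s+(T-s)^pD_\theta\gamma_s-\partial_\eta G\,D_\theta\eta_s$, which lies in $L^2(\Omega\times[0,T])$ uniformly in $k$ and $\theta$ because $\partial_\eta G=O((T-s)^2)$ and $D\eta,Db^\eta,D\gamma\in L^2$.

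Next I would pass to the limit. Let $U^\theta$ be the unique solution of the linear BSDE \eqref{eq:BSDE_Malliavin_deriv_H}; it exists in the usual $S^2\times H^2$ sense because $\partial_h G(s,H_s,\eta_s)$ is bounded by $L$ and the source is square integrable. Subtracting its equation from the recursion, the error $e^{(k)}_\theta:=D_\theta H^{(k)}-U^\theta$ obeys a linear equation driven by $-\partial_h G(s,H^{(k)}_s,\eta_s)\,e^{(k)}_\theta(s)$ plus the coefficient increments $[\partial_h G(s,H_s,\eta_s)-\partial_h G(s,H^{(k)}_s,\eta_s)]U^\theta_s$ and the analogous $\partial_\eta G$ term. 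By the local Lipschitz bound and $|H^{(k)}_s-H_s|\le\|H^{(k)}-H\|_{\mathcal H^\delta}(T-s)^2$ these increments are dominated by $C\|H^{(k)}-H\|_{\mathcal H^\delta}\big(|U^\theta_s|+|D_\theta\eta_s|\big)$, which tends to $0$ since $H^{(k)}\to H$ in $\mathcal H^\delta$; meanwhile the contraction factor is controlled by $L\delta\le\tfrac12$ from \eqref{eq:def_constants}. Hence $D_\theta H^{(k)}\to U^\theta$ in $L^2(\Omega\times[0,T])$ uniformly in $\theta$. Combined with $H^{(k)}_t\to H_t$ in $L^2(\Omega)$ and the closedness of $D\colon L^2(\Omega)\to L^2(\Omega;L^2([0,T]))$, this gives $H_t\in\mathbb D^{1,2}$ with $D_\theta H_t=U^\theta_t$, i.e. \eqref{eq:BSDE_Malliavin_deriv_H} on $[T-\delta,T]$.

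Finally I would extend to $[0,T]$. On $[0,T-\delta]$ the generator is non-singular: $H$ is bounded there and $h\mapsto G(t,h,\eta_t)$ has bounded derivative since $T-t\ge\delta$, so on this interval $H$ solves a standard Lipschitz-type BSDE with terminal value $H_{T-\delta}\in\mathbb D^{1,2}$, and both the differentiability and \eqref{eq:BSDE_Malliavin_deriv_H} follow from the classical results used for Proposition \ref{prop:Malliavin_deriv}. The main obstacle is exactly the singular regime $[T-\delta,T]$: one must first confirm that the a priori bound $|H^{(k)}_t|\le R(T-t)^2$ propagates along the iteration, and then exploit it through the weighted estimates above to tame the $(T-s)^{-2}$ blow-up of $\partial_h G$ and $\partial_\eta G$. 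This bookkeeping with the weights $(T-s)^2$ is the technically heavy part; once the coefficients are seen to be bounded, the equation for $DH$ is linear and handled classically.
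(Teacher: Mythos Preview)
Your proposal is correct and follows essentially the same route as the paper: both differentiate the Picard iterates, use the invariant bound $|H^{(k)}_s|\le R(T-s)^2$ together with \eqref{eq:choix:delta} to obtain uniform-in-$k$ boundedness of $\partial_h G$ and $\partial_\eta G$, pass to the limit in the resulting linear recursion via standard BSDE stability estimates and the closedness of $D$, and then treat $[0,T-\delta]$ by the classical Lipschitz theory. Your write-up is in fact a bit more explicit than the paper's (you spell out the error decomposition and the Lipschitz estimate on $\partial_h G$), but the underlying argument is the same.
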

\begin{proof}
    It is an adaptation of the proof of \cite[Proposition 5.4]{elka:peng:quen:97}. The solution $(H,Z^H)$ is the limit in $\mathcal H^\delta$ of the sequence $(H^k,Z^{H,k})$ unique solution in $\mathcal H^\delta$ of 
    $$H^k_t = \int_t^T F(s,H^{k-1}_s) ds - \int_t^T Z^{H,k}_s dW_s$$
    with $(H^0,Z^{H,0}) = (0,0)$. Moreover for any $k$ and $t \in [T-\delta,T]$, $|H^k_t| \leq R (T-t)^2$ and $|H^k_t|\leq C$. 
    By recursion we prove that $H^k$ has a Malliavin derivative $D H^k$ such that for $\theta \leq t \leq T$ and $t \geq T-\delta$:
\begin{align*}
D_\theta H^k_t & = \mathbb E^{\mathcal F_t}  \int_t^T  \left( (T-s) D_\theta b^\eta_s  + (T-s)^p D_\theta \gamma_s - \dfrac{\partial G}{\partial \eta}(s,H^{k-1}_s,\eta_s) D_\theta \eta_s - \dfrac{\partial G}{\partial h}(s,H^{k-1}_s,\eta_s)  D_\theta H^{k-1}_s \right)ds. 
\end{align*}
The processes $\dfrac{\partial G}{\partial \eta}(s,H^k_s,\eta_s)$ and $\dfrac{\partial G}{\partial h}(s,H^k_s,\eta_s)$ are bounded, uniformly w.r.t. $k$. Indeed using the representations \eqref{eq:deriv_G_h} and \eqref{eq:deriv_G_eta}  we obtain that for $T-\delta \leq s \leq T$
\begin{align*}
    &\left| \dfrac{\partial G}{\partial h}(s,H^k_s,\eta_s) \right| \leq  \dfrac{q|H_s|^k}{\eta_s (T-s)^2} \int_0^1\left| 1 + a\dfrac{1}{\eta_s (T-s)} H_s^k\right|^{q-2}  da\\
    & \quad
     \leq  \dfrac{qR}{\eta_\star} \int_0^1\left| 1 + a\dfrac{1}{\eta_s (T-s)} H_s^k\right|^{q-2} da
     \leq  \dfrac{qR}{\eta_\star} \int_0^1\left( 1 + a\dfrac{1}{\eta_s (T-s)} |H_s^k|\right)^{q-2} da \\ &\quad  \leq  \dfrac{qR}{\eta_\star} \int_0^1\left( 1 + \dfrac{a}{2}\right)^{q-2} da
\leq  \dfrac{q R}{\eta_\star} 2^{|q-2|},
 \end{align*}
 where we used that $|H^k_t| \leq R (T-t)^2$ and that Equation \eqref{eq:choix:delta} also holds for $H^k$ from the choice of $\delta$ given by \eqref{eq:def_constants}. Similarly
\begin{align*}
& \left| \dfrac{\partial G}{\partial \eta}(s,H^k_s,\eta_s) \right|  \leq  \dfrac{q(H^k_s)^2}{\eta_s^2 (T-s)^2} \int_0^1\left| 1 + a\dfrac{1}{\eta_s (T-s)} H_s^k\right|^{q-2} a da
    \\ &\quad  \leq  \dfrac{qR^2 (T-s)^2}{\eta_\star^2} \int_0^1\left( 1 + \dfrac{a}{2}\right)^{q-2} a da \leq  \dfrac{qR^2 (T-s)^2}{\eta_\star^2} 2^{|q-2|}.
\end{align*}
Hence using classical a priori estimates for BSDE (see \cite[Section 5.3.1]{pard:rasc:14}), we prove that the sequence $D_\theta H^k$ converges to the solution of \eqref{eq:BSDE_Malliavin_deriv_H} on $[T-\delta, T]$. Moreover we can also use classical arguments on $[0,T-\delta]$ (\cite{elka:peng:quen:97} or \cite{mastrolia:2015}) thanks to the expression of $H$ on $[0,T-\delta]$ given by \eqref{eq:asymp_Y} (see end of the proof of Proposition \ref{prop:existence_H} for the properties of $F$ on $[0,T-\delta]$). 
\end{proof}

Note that for $q=2$, the BSDE \eqref{eq:BSDE_Malliavin_deriv_H} becomes
    \begin{equation*} 
    D_\theta H_t =\mathbb E^{\mathcal F_t} \int_t^T  \left[ (T-s) D_\theta b^\eta_s  + (T-s)^2 D_\theta \gamma_s + \dfrac{H_s^2}{\eta_s^2 (T-s)^2} D_\theta \eta_s - \dfrac{2 H_s}{\eta_s(T-s)^2} D_\theta H_s \right]ds.
    \end{equation*}
\begin{lemme}
There exists $C$ such that the Malliavin derivative of $H$ satisfies: a.s. for any $0\leq \theta \leq t \leq T$
\begin{equation}\label{eq:estim_Mallia_deriv_H}  
  |D_\theta H_t| \leq C (T-t) \mathbb E^{\mathcal{F}_t}\left[\int_t^T  \left( | D_\theta b^\eta_s|  +  |D_\theta \gamma_s| + | D_\theta \eta_s | \right)  ds \right].
  \end{equation}
\end{lemme}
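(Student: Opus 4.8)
The plan is to read \eqref{eq:BSDE_Malliavin_deriv_H} as a \emph{linear} BSDE for the unknown process $u_t := D_\theta H_t$, whose driver is affine in $u$. Writing $\Lambda_s := \dfrac{\partial G}{\partial h}(s,H_s,\eta_s)$ and
$$\Phi_s := (T-s) D_\theta b^\eta_s + (T-s)^p D_\theta \gamma_s - \dfrac{\partial G}{\partial \eta}(s,H_s,\eta_s) D_\theta \eta_s,$$
the equation reads $u_t = \mathbb{E}^{\mathcal{F}_t}\int_t^T(\Phi_s - \Lambda_s u_s)\,ds$. Setting $\psi_s := |D_\theta b^\eta_s| + |D_\theta \gamma_s| + |D_\theta \eta_s|$, the target estimate \eqref{eq:estim_Mallia_deriv_H} amounts to $|u_t| \leq C(T-t)\,\mathbb{E}^{\mathcal{F}_t}\int_t^T \psi_s\,ds$. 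First I would record two structural bounds on the coefficients, valid a.s. on all of $[0,T]$: namely $|\Lambda_s| \leq C$ and $|\Phi_s| \leq C(T-s)\psi_s$.

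On the interval $[T-\delta,T]$ both bounds are essentially already contained in the proof of Lemma \ref{lem:mallia_deriv_H}: there $|\Lambda_s| \leq \dfrac{qR}{\eta_\star}2^{|q-2|} = L$, and $\left|\dfrac{\partial G}{\partial \eta}(s,H_s,\eta_s)\right| \leq \dfrac{qR^2}{\eta_\star^2}2^{|q-2|}(T-s)^2$. Since $T-s \leq \delta \leq 1$ there, one has $(T-s)^p \leq (T-s)$ and $(T-s)^2 \leq (T-s)$, whence $|\Phi_s| \leq C(T-s)\psi_s$. On the complementary interval $[0,T-\delta]$ I would use that $H$ is bounded, that $\eta_\star \leq \eta_s \leq \eta^\star$ and that $T-s \geq \delta$, so that $\dfrac{|H_s|}{\eta_s(T-s)}$ is bounded; the representations \eqref{eq:deriv_G_h}--\eqref{eq:deriv_G_eta} then show that both $\dfrac{\partial G}{\partial h}$ and $\dfrac{\partial G}{\partial \eta}$ evaluated at $(s,H_s,\eta_s)$ are bounded by a constant $C_\delta$. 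Using once more $T-s \geq \delta$ to write $C_\delta \leq (C_\delta/\delta)(T-s)$, together with $(T-s)^p \leq T^{p-1}(T-s)$, the same two bounds follow on $[0,T-\delta]$, and hence on all of $[0,T]$.

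Then I would solve the affine equation explicitly. Introducing the adapted, bounded integrating factor $\Gamma_{t,s} := \exp\bigl(-\int_t^s \Lambda_r\,dr\bigr)$, the classical linear BSDE formula (see \cite{elka:peng:quen:97}) gives $u_t = \mathbb{E}^{\mathcal{F}_t}\int_t^T \Gamma_{t,s}\Phi_s\,ds$. Because $|\Lambda_r| \leq C$, we have $|\Gamma_{t,s}| \leq e^{C(s-t)} \leq e^{CT} =: C'$, so that
$$|u_t| \leq C'\,\mathbb{E}^{\mathcal{F}_t}\int_t^T |\Phi_s|\,ds \leq C\,\mathbb{E}^{\mathcal{F}_t}\int_t^T (T-s)\psi_s\,ds.$$
Finally, bounding $(T-s) \leq (T-t)$ for $s \geq t$ and pulling this factor out of the integral yields exactly \eqref{eq:estim_Mallia_deriv_H}.

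The only genuinely delicate point is the uniform-in-$s$ control of the linear coefficient $\Lambda_s$ up to the terminal time $T$: a priori $\dfrac{\partial G}{\partial h}$ carries a factor $(T-s)^{-1}$ and could blow up, and it is precisely the a.s. bound $|H_s| \leq R(T-s)^2$ together with the choice \eqref{eq:choix:delta} of $\delta$ that cancels this singularity and keeps $\Lambda_s$ bounded. Everything else is a routine integrating-factor computation; I expect no further obstacle, since the $(T-s)$ gain needed to produce the prefactor $(T-t)$ comes for free from the source term $\Phi_s$ rather than from the linear part.
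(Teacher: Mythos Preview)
Your proof is correct and follows essentially the same route as the paper: write $D_\theta H$ via the explicit integrating-factor formula \eqref{eq:deriv_H_explicit} for the linear BSDE \eqref{eq:BSDE_Malliavin_deriv_H}, bound $\Gamma_{t,s}$ using the uniform control on $\partial_h G(s,H_s,\eta_s)$, and extract the factor $(T-t)$ from the source term. The paper organizes the estimate by splitting into the two regimes $t\geq T-\delta$ and $t\leq T-\delta$ (and in the latter inserts $(T-t)$ via $1\leq (T-t)/\delta$), whereas you establish the global bounds $|\Lambda_s|\leq C$ and $|\Phi_s|\leq C(T-s)\psi_s$ on all of $[0,T]$ at once; this is a cosmetic difference. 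One minor remark: on $[0,T-\delta]$ it is cleaner to invoke the closed formulas \eqref{eq:deriv_G_h_0}--\eqref{eq:deriv_G_eta_0} rather than the integral representations \eqref{eq:deriv_G_h}--\eqref{eq:deriv_G_eta}, since for $q<2$ the latter require the extra observation that $1+\frac{H_s}{\eta_s(T-s)}\geq 0$ (which does hold, by $Y\geq 0$ and \eqref{eq:asymp_Y}) to control the integrand.
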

\begin{proof}
    Evoke that $|H_t| \leq R(T-t)^2$ and remark that, by solution of the linear BSDE \eqref{eq:BSDE_Malliavin_deriv_H},
  \begin{equation}  \label{eq:deriv_H_explicit}
  D_\theta H_t = \mathbb E^{\mathcal{F}_t} \left [\int_t^T  \left[ (T-s) D_\theta b^\eta_s  + (T-s)^{p} D_\theta \gamma_s -\dfrac{\partial G}{\partial \eta}(s,H_s,\eta_s) D_\theta \eta_s \right]  \Gamma_{t,s}  ds \right] 
  \end{equation}  
  where 
  \begin{equation} \label{eq:def_Gamma}
  \Gamma_{t,s} = \exp \left( -\int_t^s \dfrac{\partial G}{\partial h}(u,H_u,\eta_u)  du \right).
  \end{equation}
We already know that on $[T-\delta,T]$:
$$
 \left|  \dfrac{\partial G}{\partial h}(s,H_s,\eta_s) \right| \leq  \dfrac{q R}{\eta_\star} 2^{|q-2|} , \quad \left|   \dfrac{\partial G}{\partial \eta}(s,H_s,\eta_s) \right| \leq    \dfrac{qR^2}{\eta_\star^2}2^{|q-2|} (T-s)^2 . 
$$
If $ 0\leq \theta \leq T$ and $(T-\delta)\vee \theta \leq t \leq T$, the result directly follows from \eqref{eq:deriv_H_explicit}. If $0\leq \theta \leq t \leq T-\delta$, 
$$D_\theta H_t = \mathbb E^{\mathcal{F}_t} \left [  D_\theta H_{T-\delta} \Gamma_{t,T-\delta} \right] + \mathbb E^{\mathcal{F}_t} \left [  \int_t^{T-\delta}  \left[ (T-s) D_\theta b^\eta_s  + (T-s)^{p} D_\theta \gamma_s -\dfrac{\partial G}{\partial \eta}(s,H_s,\eta_s) D_\theta \eta_s \right]  \Gamma_{t,s}  ds \right]$$
and $\left|\dfrac{\partial G}{\partial h}(\cdot,H,\eta) \right|$ and $\left|\dfrac{\partial G}{\partial \eta}(\cdot,H,\eta) \right|$ are bounded processes on $[0,T-\delta]$. Hence there exists a constant $C$ such that 
\begin{align*}  
& | D_\theta H_t | \leq C \mathbb E^{\mathcal{F}_t} \left [  |D_\theta H_{T-\delta}| \right] + \mathbb E^{\mathcal{F}_t} \left[ \int_t^{T-\delta}  \left( |D_\theta b^\eta_s|  + |D_\theta \gamma_s| +| D_\theta \eta_s| \right) ds \right]  \\
 &\quad \leq C \mathbb E^{\mathcal{F}_t} \left [ C \delta  \int_{T-\delta}^T  \left( | D_\theta b^\eta_s|  +  |D_\theta \gamma_s| + | D_\theta \eta_s | \right)  ds +  \int_t^{T-\delta}  \left( |D_\theta b^\eta_s|  + |D_\theta \gamma_s| +| D_\theta \eta_s| \right) ds \right]  \\
 &\quad \leq C\mathbb E^{\mathcal{F}_t} \left[ \int_t^{T} \left( |D_\theta b^\eta_s|  + |D_\theta \gamma_s| +| D_\theta \eta_s| \right) ds \right]  = \dfrac{C}{\delta} (T-t)\mathbb E^{\mathcal{F}_t} \left [   \int_t^{T}  \left( |D_\theta b^\eta_s|  + |D_\theta \gamma_s| +| D_\theta \eta_s| \right) ds \right]. 
  \end{align*}  
\end{proof}

From this estimate, we have
$$\mathbb E \int_0^T \int_0^T |D_\theta H_t|^2 d\theta dt < +\infty$$
and from Equation \eqref{eq:asymp_Y}, we deduce the beginning of Theorem \ref{thm:main_result} with Equation \eqref{eq:deriv_Y_explicit}. 
From Equation \eqref{eq:estim_Mallia_deriv_H} and Assumption \ref{assumption:Malliavin:b:eta:gamma}, we deduce that:
\begin{equation*}
  |D_\theta H_t| \leq C (T-t)^{3/2} \mathbb E^{\mathcal{F}_t} \left[ \left( \int_t^T  \left( | D_\theta b^\eta_s|^2  +  |D_\theta \gamma_s|^2 + | D_\theta \eta_s |^2 \right)  ds \right)^{1/2} \right].
  \end{equation*}
Thus close to $T$, the leading term in $D_\theta Y_t $ is $\dfrac{D_\theta \eta_t}{(T-t)^{p-1}}$, and if $\lim_{t\to T} D_\theta \eta_t = D_\theta \eta_T$ a.s. (for example if $\eta_t = \eta(t,X_t)$ with $X$ solution of SDE \eqref{eq:sde}, see Section \ref{ssect:gradient}) then, on the set $\{ D_\theta \eta_T \neq 0\}$,
$$\lim_{t\to T} |D_\theta Y_t | = +\infty.$$
Therefore there is a singularity at time $T$ for the Malliavin derivative, even in this quite simple case where $\xi = +\infty$ a.s. And we expect that a similar singular behavior holds in the general setting studied in \cite{krus:popi:15,caci:deni:popi:25} when $\xi$ can be equal to $+\infty$ with a positive probability. 

\begin{remarque}
    However if $\eta$ is deterministic, then $D_\theta \eta = 0$ and $D_\theta Y_t =\dfrac{1}{(T-t)^{p}} D_\theta H_t$
    and from \eqref{eq:deriv_H_explicit}
    $$  |D_\theta H_t | \leq \mathbb E^{\mathcal{F}_t} \left [\int_t^T (T-s)^{p} |D_\theta \gamma_s | \Gamma_{t,s}  ds \right] \leq C (T-t)^p  \mathbb E^{\mathcal{F}_t} \left [\int_t^T  |D_\theta \gamma_s |  ds \right] .$$
    Then a.s. 
    $$\lim_{t\to T} |D_\theta Y_t | = 0.$$
    In this case we do not have singularity of the derivative.
\end{remarque}

\section{Convergence of the Malliavin derivatives} \label{sect:conv_Malliavin_deriv}

Evoke that $Y^n_t$ converges a.s. to $Y_t$ given by \eqref{eq:asymp_Y}. Moreover $Y^n$ and $Y$ belong to $\mathbb D^{1,2}$ (Proposition \ref{prop:Malliavin_deriv} and first part of Theorem \ref{thm:main_result}). 
The goal of this section is to that the sequence $D Y^n$ converges to $DY$.  

Let us define the process $\mathcal H^n$ by:
$$\mathcal{H}^n_t = \left(T-t+ \left(\frac{\eta^\star}{n}\right)^{q-1}\right)^{p} Y_t^n - \left(T-t+ \left(\frac{\eta^\star}{n}\right)^{q-1}\right)\eta_t, \quad 0\leq t\leq T,$$
to get
\begin{equation} \label{eq:definition:Hn}
Y^n_t = \dfrac{1}{\left(T-t+ \left(\frac{\eta^\star}{n}\right)^{q-1}\right)^{p-1}} \eta_t + \dfrac{1}{\left(T-t+ \left(\frac{\eta^\star}{n}\right)^{q-1}\right)^{p}} \mathcal H^n_t.
\end{equation}
Thus 
$$\mathcal{H}^n_T = \left( \left(\frac{\eta^\star}{n}\right)^{q-1}\right)^{p} Y_T^n - \left(\frac{\eta^\star}{n}\right)^{q-1}\eta_T = \left(\frac{\eta^\star}{n}\right)^{q} Y_T^n - \left(\frac{\eta^\star}{n}\right)^{q-1}\eta_T=\left(\frac{\eta^\star}{n}\right)^{q} n - \left(\frac{\eta^\star}{n}\right)^{q} \frac{n}{\eta^\star} \eta_T.$$
The process $\mathcal H^n$ is the solution of the BSDE 
\begin{equation} \label{eq:BSDE_H_n}
\mathcal H^n_t =  (\eta^\star)^q\left( 1 - \dfrac{\eta_T}{\eta^\star}\right) \dfrac{1}{n^{q-1}} + \int_t^T F \left( s- \left(\dfrac{\eta^\star}{n}\right)^{q-1} , \mathcal H^n_s \right) ds - \int_t^T \mathcal Z^n_s dW_s.
\end{equation}
Note that the terminal value is non-negative:
$$\mathcal H^n_T = (\eta^\star)^q\left( 1 - \dfrac{\eta_T}{\eta^\star}\right) \dfrac{1}{n^{q-1}} \geq 0.$$

Let us start with an estimate on $\mathcal H^n$. With an abuse of notations, we still denote the constant $\delta$ in the next statement. Indeed we can always take the minimum between the constant $\delta$ coming from the existence of the process $H$ and the constant $\delta$ coming from this lemma.
\begin{lemme} \label{lem:bounds_on_H^n}
    There exist $\delta > 0$, $n_0 \in \mathbb N$ and two positive constants $C_1$ and $C_2$ s.t. for any $n\geq n_0$ and $T-\delta \leq t \leq T$:
$$-C_1 \left( T-t + \left( \frac{\eta^\star}{n}\right)^{q-1}\right)^2 \leq \mathcal H^n_t \leq C_2 \left( T-t + \left( \frac{\eta^\star}{n}\right)^{q-1}\right).  $$
Moreover there exists $C$ such that for any $n$, on $[0,T-\delta]$, $|\mathcal H^n_t| \leq C$. 
\end{lemme}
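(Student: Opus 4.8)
The plan is to prove the two-sided estimate on the terminal slab $[T-\delta,T]$ by a barrier (comparison) argument, and to treat $[0,T-\delta]$ by a direct substitution. Write $c_n=(\eta^\star/n)^{q-1}$ and $\phi_n(t)=T-t+c_n$, so that for each fixed $n$ the BSDE \eqref{eq:BSDE_H_n} is a genuine non-singular BSDE (its denominators $\phi_n(s)\ge c_n>0$ stay away from $0$) with driver $\phi_n(s)b^\eta_s+\phi_n(s)^p\gamma_s-G(s-c_n,\mathcal H^n_s,\eta_s)$. Two facts about $G$ drive the argument. First, the bracket in \eqref{eq:gene_F} is the second–order Taylor remainder $g(u)=(1+u)|1+u|^{q-1}-1-qu$ of a smooth function at $u=0$, and one checks that $g\ge0$ on $[-1,\infty)$; since \eqref{eq:definition:Hn} gives $Y^n=\tfrac{\eta}{\phi_n^{p-1}}\big(1+\tfrac{\mathcal H^n}{\eta\phi_n}\big)$, the nonnegativity $Y^n\ge0$ from Lemma \ref{prop:truncated:bsde} forces $u=\mathcal H^n/(\eta\phi_n)\ge-1$, hence $G(s-c_n,\mathcal H^n_s,\eta_s)\ge0$ along the solution. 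Second, on $|u|\le1/2$ the representation \eqref{eq:deriv_G_h} yields the quadratic control $0\le G(t,h,\eta)\le \tfrac{q\,2^{|q-2|}}{2\eta_\star}\,h^2/(T-t)^2$.

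The upper bound is then immediate and needs no truncation: since $G\ge0$ along $\mathcal H^n$, the generator of \eqref{eq:BSDE_H_n} is dominated by the affine driver $\phi_n(s)b^\eta_s+\phi_n(s)^p\gamma_s$, so the comparison theorem (\cite{pard:rasc:14}) gives
\[
\mathcal H^n_t\ \le\ \mathbb E^{\mathcal F_t}\Big[\mathcal H^n_T+\int_t^T\big(\phi_n(s)b^\eta_s+\phi_n(s)^p\gamma_s\big)\,ds\Big].
\]
On $[T-\delta,T]$ one has $\phi_n(s)\le\phi_n(t)\le\delta+c_{n_0}$, whence $\int_t^T\phi_n(s)^k\,ds\le C\,\phi_n(t)$ for $k\ge1$, while $0\le\mathcal H^n_T\le\eta^\star c_n\le\eta^\star\phi_n(t)$ by the terminal formula; this produces the constant $C_2$.

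The lower bound is the heart of the lemma and the main obstacle. The naive route—bounding $G(s-c_n,\mathcal H^n_s,\eta_s)$ through the upper estimate $\mathcal H^n_s\le C_2\phi_n(s)$—only controls $G$ by a constant and integrates to an $O(\phi_n(t))$ term, missing the asserted order $\phi_n(t)^2$. The correct device is to use the explicit subsolution $\underline{\mathcal H}_t=-C_1\phi_n(t)^2$ and to evaluate $G$ \emph{at the barrier value} $h=-C_1\phi_n^2$, where it is genuinely of order $\phi_n^2$. Since $\tfrac{d}{dt}\phi_n^2=-2\phi_n$, $\underline{\mathcal H}$ solves the deterministic BSDE with generator $-2C_1\phi_n(s)$ and terminal value $-C_1c_n^2\le0\le\mathcal H^n_T$. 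At $h=-C_1\phi_n^2$ one has $u=-C_1\phi_n/\eta_s$, so $|u|\le1/2$ once $\delta,n_0$ are small, and the quadratic control gives $0\le G(s-c_n,-C_1\phi_n^2,\eta_s)\le\tfrac{q2^{|q-2|}}{2\eta_\star}C_1^2\phi_n(s)^2$. Together with $\phi_n b^\eta+\phi_n^p\gamma\ge-\|b^\eta\|_\infty\phi_n$, the subsolution inequality
\[
-2C_1\phi_n(s)\ \le\ \phi_n(s)b^\eta_s+\phi_n(s)^p\gamma_s-G\big(s-c_n,-C_1\phi_n(s)^2,\eta_s\big)
\]
reduces to $2C_1\ge\|b^\eta\|_\infty+\tfrac{q2^{|q-2|}}{2\eta_\star}C_1^2\phi_n(s)$, which holds for $C_1=R$ as in \eqref{eq:def_constants} as soon as $\phi_n(s)\le 2/L$; this is guaranteed by the choice of $\delta$ and a large enough $n_0$. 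Comparison then yields $\underline{\mathcal H}\le\mathcal H^n$.

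One technical caveat remains: the map $h\mapsto F(s-c_n,h)$ is only locally Lipschitz (for $q>2$ its derivative $-\partial_h G$ grows like $|h|^{q-1}$), so the comparison theorem does not apply on all of $\R$. I would remove this by truncating $h$ to the band $[-C_1\phi_n(s)^2,C_2\phi_n(s)]$, on which $\partial_h G$ is bounded for each fixed $n$ and the generator becomes globally Lipschitz; the barriers $\underline{\mathcal H}$ and $\bar{\mathcal H}=C_2\phi_n$ are a sub- and supersolution of the truncated equation (the latter because $G\ge0$ and the driver is $\le C_2$ for $\delta,n_0$ small), comparison squeezes the truncated solution into the band where the truncation is inactive, and uniqueness identifies it with $\mathcal H^n$. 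This gives both inequalities on $[T-\delta,T]$, with $C_1,C_2$ uniform in $n\ge n_0$. Finally, on $[0,T-\delta]$ the estimate is elementary: there $T-t\ge\delta$, so \eqref{eq:a_priori_estimate} together with $0\le Y^n_t\le Y_t$ bounds $Y^n$ by $C\delta^{-p}$ uniformly in $n$, while $\phi_n(t)$ and $\eta_t$ are bounded; substituting into $\mathcal H^n_t=\phi_n(t)^pY^n_t-\phi_n(t)\eta_t$ from \eqref{eq:definition:Hn} gives $|\mathcal H^n_t|\le C$ for all $n$.
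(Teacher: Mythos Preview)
Your argument is correct. For the lower bound you do exactly what the paper does: take the barrier $U_t=-C_1\phi_n(t)^2$, check the subsolution inequality $-2C_1\phi_n\le F(\cdot-c_n,U)$ by estimating $G$ at the barrier via the quadratic Taylor representation, and close with comparison; your extra discussion of a truncation to make the generator globally Lipschitz is a point the paper simply invokes as ``comparison principle for BSDEs'' without elaboration. The $[0,T-\delta]$ bound is identical to the paper's.

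The upper bound is where you diverge. The paper builds an explicit supersolution $V_t=C_2\phi_n(t)$ and verifies $F(t-c_n,V_t)\le C_2$, which forces $C_2$ to satisfy a nonlinear inequality solved through the inverse of $\psi(x)=(1+x)^q-1-x$. You instead exploit that $Y^n\ge0$ implies $\mathcal H^n_s/(\eta_s\phi_n(s))\ge-1$, on which range the Taylor remainder $g(u)=(1+u)^q-1-qu$ is nonnegative, hence $G(\cdot-c_n,\mathcal H^n,\eta)\ge0$ \emph{along the solution}; taking conditional expectation in \eqref{eq:BSDE_H_n} then gives the bound directly with a completely explicit $C_2$. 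This is shorter and avoids the implicit constant via $\psi^{-1}$, at the price of importing the sign information $Y^n\ge0$ from Lemma~\ref{prop:truncated:bsde}. (Minor quibble: this step is just conditional expectation, not the comparison theorem; and the integral representation you use for $G$ is the unnumbered display above \eqref{eq:deriv_G_h}, not \eqref{eq:deriv_G_h} itself.) Both routes yield the same order in $\phi_n$; yours is more economical, the paper's is self-contained in that it does not feed back information about $Y^n$.
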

\begin{proof}
    We already know that 
    $$0\leq \mathcal H^n_T = (\eta^\star)^q\left( 1 - \dfrac{\eta_T}{\eta^\star}\right) \dfrac{1}{n^{q-1}} \leq\dfrac{(\eta^\star)^q}{n^{q-1}} .$$
    Hence the estimate holds at time $T$ for any $C_1 \geq 0$ and $C_2 \geq \eta^\star$.

    Now if $V_t = C_2 \left( T-t + \left( \frac{\eta^\star}{n}\right)^{q-1}\right)$, then $-dV_t = C_2 dt$ and 
    \begin{align*}
    F\left( t- \left( \frac{\eta^\star}{n}\right)^{q-1} , V_t  \right) & = \left[ \left(T-t+\left( \frac{\eta^\star}{n}\right)^{q-1} \right) b^\eta_t  + \left(T-t+\left( \frac{\eta^\star}{n}\right)^{q-1} \right)^{p} \gamma_t \right]\\
    & \qquad - (p-1) \eta_t \left[ \left( 1 + \dfrac{C_2}{\eta_t } \right)^q - 1 - \dfrac{qC_2}{\eta_t} \right].
       \end{align*}
Hence $F \left( t- \left(\dfrac{\eta^\star}{n}\right)^{q-1} , V_t  \right) \leq C_2$ if for any $t$ and $n$:
$$(p-1) \eta_t \left[ \left( 1 + \dfrac{C_2}{\eta_t } \right)^q - 1 - \dfrac{qC_2}{\eta_t} \right]+C_2 \geq \left(T-t+\left( \frac{\eta^\star}{n}\right)^{q-1} \right) b^\eta_t  + \left(T-t+\left( \frac{\eta^\star}{n}\right)^{q-1} \right)^{p} \gamma_t.$$
In particular it holds if 
\begin{align*}
& (p-1) \left[ \left( 1 + \dfrac{C_2}{\eta_t } \right)^q - 1 - \dfrac{qC_2}{\eta_t} \right]+\dfrac{C_2}{\eta_t}  =(p-1) \left[ \left( 1 + \dfrac{C_2}{\eta_t } \right)^q - 1 - \dfrac{C_2}{\eta_t} \right] \\
&\qquad  \geq \dfrac{1}{\eta_\star} (T+(\eta^\star)^{q-1}) \left[  \|b^\eta\|_\infty  + (T+(\eta^\star)^{q-1})^{p} \gamma^\star \right],
\end{align*}
which is equivalent to
$$
\left( 1 + \dfrac{C_2}{\eta_t } \right)^q - 1 - \dfrac{C_2}{\eta_t}  \geq \dfrac{q-1}{\eta_\star} (T+(\eta^\star)^{q-1}) \left[  \|b^\eta\|_\infty  + (T+(\eta^\star)^{q-1})^{p} \gamma^\star \right].$$
The function $\psi : x \mapsto (1+x)^q - 1 - x$ is continuous and increasing on $[0,+\infty[$, with $\psi(0)=0$ and $\psi(\infty) = \infty$. Hence
it is sufficient to take 
$$C_2 \geq \eta^\star \left[\psi^{-1}\left( \dfrac{q-1}{\eta_\star} (T+(\eta^\star)^{q-1}) \left[  \|b^\eta\|_\infty  + (T+(\eta^\star)^{q-1})^{p} \gamma^\star \right]\right)  \vee 1 \right],$$
to obtain, by the comparison result for BSDE, that a.s. for any $t$, $\mathcal H^n_t \leq V_t$. Remark that for $q= 2$, $C_2$ can be explicitly obtained by solving a second degree equation.

To get the lower estimate, we similarly proceed, by defining $U_t = -C_1 \left( T-t + \left(\frac{\eta^\star}{n}\right)^{q-1}\right)^2$. This quantity satisfies $-dU_t = -2 C_1 \left( T-t + \left(\frac{\eta^\star}{n}\right)^{q-1}\right) dt$ and 
    \begin{align} \nonumber
     F \left( t- \left(\dfrac{\eta^\star}{n}\right)^q , U_t  \right) 
     & = \left[ \left(T-t+\left( \frac{\eta^\star}{n}\right)^{q-1} \right) b^\eta_t  + \left(T-t+\left( \frac{\eta^\star}{n}\right)^{q-1} \right)^{p} \gamma_t \right] \\ \label{eq:comp_gene_control_H_n}
    & \qquad -(p-1) \eta_t \left[  \left( 1 - \chi_t \right)\left| 1 - \chi_t \right|^{q-1}  - 1 + q \chi_t \right] \geq -2 \eta_t \chi_t
       \end{align}
where 
$\chi_t = \dfrac{C_1}{\eta_t} \left( T-t + \left(\frac{\eta^\star}{n}\right)^{q-1}\right) \geq 0$. 
Note that for some $\delta > 0$ and $n_0 \in \mathbb N$ such that 
\begin{equation} \label{eq:first_cond_control_H_n}
C_1 \left( \delta + \left(\frac{\eta^\star}{n_0}\right)^{q-1}\right)\leq \dfrac{\eta_\star}{2},
\end{equation}
the quantity $\chi_t$ satisfies: $\chi_t \leq 1/2$. Moreover let us recall that if $1-\chi_t > 0$, then 
$$\left( 1 - \chi_t \right)\left| 1 - \chi_t \right|^{q-1}  - 1 + q \chi_t = q(q-1)\chi_t^2 \int_0^1 \left| 1 - a \chi_t \right|^{q-2} (1-a) da.$$
Then the previous inequality \eqref{eq:comp_gene_control_H_n} holds if 
$$ q\eta_t \chi_t^2 \int_0^1 \left| 1 - a \chi_t \right|^{q-2} (1-a) da  - 2 \eta_t \chi_t \leq  - \left(T-t+\left( \frac{\eta^\star}{n}\right)^{q-1} \right) \left[  \|b^\eta\|_\infty  + (T+(\eta^\star)^q)^{p-1} \gamma^\star  \right],$$
which can be written as:
$$ (C_1)^2 \dfrac{\zeta_t}{\eta_t} \left(T-t+\left( \frac{\eta^\star}{n}\right)^{q-1} \right)  - 2 C_1 + K \leq  0 ,$$
with 
$$ \zeta_t = q \int_0^1 \left| 1 - a \chi_t \right|^{q-2} (1-a) da, \quad K= \left[  \|b^\eta\|_\infty  + (T+(\eta^\star)^q)^{p-1} \gamma^\star  \right].$$
Note that for $q=2$, $\zeta_t = 1$, which simplifies the discussion here after. First under \eqref{eq:first_cond_control_H_n}, for any $T-\delta \leq t \leq T$ and $n \geq n_0$
$$\zeta_\star = \dfrac{q}{2} \left( \dfrac{1}{2^{q-2}} \wedge 1 \right) \leq \zeta_t= q \int_0^1 \left| 1 - a \chi_t \right|^{q-2} (1-a) da \leq \dfrac{q}{2} \left( \dfrac{1}{2^{q-2}} \vee 1 \right) = \zeta^\star.$$
Now if 
\begin{equation} \label{eq:second_cond_control_H_n}
 \delta + \left(\frac{\eta^\star}{n_0}\right)^{q-1} \leq \dfrac{\eta_\star}{K \zeta^\star},
\end{equation}
then for any $T-\delta \leq t \leq T$ and $n \geq n_0$, $\dfrac{\zeta_t}{\eta_t} K \left(T-t+\left( \frac{\eta^\star}{n}\right)^{q-1} \right)\leq 1$. 
Thus the desired estimate holds if 
\begin{equation} \label{eq:third_cond_control_H_n}
\dfrac{\eta_t}{\zeta_t\left(T-t+\left( \frac{\eta^\star}{n}\right)^{q-1} \right)} \left( 1 - \sqrt{1-\dfrac{\zeta_t}{\eta_t} K\left(T-t+\left( \frac{\eta^\star}{n}\right)^{q-1} \right)} \right) \leq C_1 
\end{equation}
and 
\begin{equation} \label{eq:fourth_cond_control_H_n}
C_1 \leq \dfrac{\eta_t}{\zeta_t\left(T-t+\left( \frac{\eta^\star}{n}\right)^{q-1} \right)} \left( 1 + \sqrt{1-\dfrac{\zeta_t}{\eta_t} K\left(T-t+\left( \frac{\eta^\star}{n}\right)^{q-1} \right)} \right).
\end{equation}
For the upper bound, remark that for $T-\delta \leq t \leq T$ and $n \geq n_0$
\begin{align*}
   & \dfrac{\eta_t}{\zeta_t\left(T-t+\left( \frac{\eta^\star}{n}\right)^{q-1} \right)} \left( 1 + \sqrt{1-\dfrac{\zeta_t}{\eta_t} K\left(T-t+\left( \frac{\eta^\star}{n}\right)^{q-1} \right)} \right) \\
   & \qquad \geq \dfrac{\eta_\star}{\zeta^\star \left(\delta+\left( \frac{\eta^\star}{n_0}\right)^{q-1} \right)} \left( 1 + \sqrt{1-\dfrac{\zeta^\star}{\eta_\star} K\left(\delta+\left( \frac{\eta^\star}{n_0}\right)^{q-1} \right)} \right). 
\end{align*}
And the lower bound satisfies:
\begin{align*}
    & \dfrac{\eta_t}{\zeta_t\left(T-t+\left( \frac{\eta^\star}{n}\right)^{q-1} \right)} \left( 1 - \sqrt{1-\dfrac{\zeta_t}{\eta_t} K\left(T-t+\left( \frac{\eta^\star}{n}\right)^{q-1} \right)} \right) \\
    & \quad =  \dfrac{K}{1 + \sqrt{1-\dfrac{\zeta_t}{\eta_t} K\left(T-t+\left( \frac{\eta^\star}{n}\right)^{q-1} \right)}} \leq \dfrac{K}{1 +\sqrt{1-\dfrac{\eta^\star}{\eta_\star} \left(\delta+\left( \frac{\eta^\star}{n_0}\right)^{q-1} \right)}}.
\end{align*}
Therefore there exists $\delta > 0$, $n_0\in \mathbb N$ and $C_1 > 0 $ such that Conditions \eqref{eq:first_cond_control_H_n},\eqref{eq:second_cond_control_H_n}, \eqref{eq:third_cond_control_H_n} and \eqref{eq:fourth_cond_control_H_n} hold. Then by comparison principle for BSDEs, for any $T-\delta \leq t \leq T$ and $n \geq n_0$
$$\mathcal H^n _t \geq -C_1 \left(T-t + \left( \frac{\eta^\star}{n} \right)^{q-1}\right)^2.$$

The last statement of the lemma comes from \eqref{eq:a_priori_estimate}: on $[0,T-\delta]$
\begin{align*}
0\leq Y^n_t \leq Y_t &  \leq \frac{1}{\delta^p} \left(  \frac{1}{\eta_\star} + (T+1)^{p} \gamma^\star \right) 
\end{align*}
and from the very definition \eqref{eq:definition:Hn} of $\mathcal H^n$: 
 \begin{equation*} 
|\mathcal H^n_t| \leq \left(T+ \left(\frac{\eta^\star}{n}\right)^{q-1}\right)^{p} Y^n_t + \eta^\star .
\end{equation*}
This achieves the proof of the lemma. 
\end{proof}

Since $Y^n_t$ converges a.s. to $Y_t$ for any $t$, we already know that $\mathcal H^n_t$ converges to $H_t$. The next result is a convergence result with weights. 
\begin{lemme} \label{lem:convergence_H_n}
We have a.s. 
$$\lim_{n\to +\infty} \int_0^T \left| \dfrac{H_t}{T-t} - \dfrac{\mathcal H^n_t}{\left(T-t+ \frac{\eta^\star}{n}\right)}  \right| dt = 0$$
and for any $\varrho > 0$
  $$\lim_{n\to +\infty} \mathbb E \int_0^T \left| \dfrac{H_t}{T-t} - \dfrac{\mathcal H^n_t}{\left(T-t+ \frac{\eta^\star}{n}\right)}  \right|^\varrho dt = 0.$$
\end{lemme}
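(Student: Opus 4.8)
The plan is to deduce both assertions from a single dominated--convergence argument, after isolating two ingredients: pointwise convergence of the integrand, and a \emph{deterministic}, $n$-uniform bound on it. The decomposition \eqref{eq:definition:Hn} writes $Y^n$ in terms of the denominator $T-t+(\eta^\star/n)^{q-1}$ (the same quantity appearing in Lemma \ref{lem:bounds_on_H^n}), which is the one I would keep throughout.

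First I would record the pointwise convergence. For each fixed $t\in[0,T)$ the regularising term $(\eta^\star/n)^{q-1}$ tends to $0$, and, as observed just before the statement, $\mathcal H^n_t\to H_t$ a.s.; this in turn follows from the monotone a.s.\ convergence $Y^n_t\uparrow Y_t$ together with the continuous dependence of $\mathcal H^n$ and $H$ on $Y^n$, $Y$, $\eta$ via \eqref{eq:definition:Hn} and \eqref{eq:asymp_Y}. Hence, for every $t\in[0,T)$,
\[
\frac{\mathcal H^n_t}{T-t+(\eta^\star/n)^{q-1}}\longrightarrow \frac{H_t}{T-t}\quad\text{a.s.}
\]
Since the exceptional null set may depend on $t$, I would invoke Fubini on $\Omega\times[0,T]$ to upgrade this to: for a.s.\ $\omega$, the convergence holds for Lebesgue-a.e.\ $t\in[0,T)$; the endpoint $t=T$ is negligible.

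The heart of the proof is the uniform bound, and this is exactly where Lemma \ref{lem:bounds_on_H^n} is designed to help. On $[T-\delta,T]$ and for $n\ge n_0$, dividing the two-sided estimate of that lemma by the denominator gives
\[
-C_1\Big(T-t+(\eta^\star/n)^{q-1}\Big)\le \frac{\mathcal H^n_t}{T-t+(\eta^\star/n)^{q-1}}\le C_2,
\]
which is bounded by a deterministic constant because $T-t\le\delta$ and $(\eta^\star/n)^{q-1}\le(\eta^\star/n_0)^{q-1}$; the quadratic lower bound is precisely what cancels one power of the denominator. Similarly $|H_t|\le R(T-t)^2$ yields $|H_t|/(T-t)\le R\delta$. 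On $[0,T-\delta]$ both numerators are dominated by the deterministic constant $C$ of Lemma \ref{lem:bounds_on_H^n} (for $H$, by letting $n\to\infty$ in $|\mathcal H^n_t|\le C$), while the denominators are $\ge\delta$, so the integrand is again bounded. Patching the two regions, the integrand is dominated by one deterministic constant $M$, uniformly in $n\ge n_0$ and $t\in[0,T]$.

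With a.e.\ pointwise convergence and the constant dominating function $M^\varrho$ (integrable on the finite measure spaces $[0,T]$ and $\Omega\times[0,T]$) in hand, dominated convergence gives the a.s.\ statement by applying it in $t$ for fixed $\omega$; the expectation statement then follows either from a second application in $(\omega,t)$ or, equivalently, from bounded convergence in $\omega$ using that $\int_0^T|\cdots|^\varrho\,dt\le M^\varrho T$. The finitely many indices $n<n_0$ do not affect the limit. The only delicate point is the behaviour near $T$, where numerator and denominator both vanish; the asymmetric estimates of Lemma \ref{lem:bounds_on_H^n}---linear from above, quadratic from below---are exactly calibrated so that the quotient stays bounded, and everything else is routine.
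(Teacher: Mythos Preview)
Your argument is correct and yields the same conclusion via dominated convergence, but the way you obtain the uniform bound differs from the paper. You stay at the level of $H$ and $\mathcal H^n$ and invoke Lemma~\ref{lem:bounds_on_H^n}: on $[T-\delta,T]$ the two-sided estimate there, once divided by the regularised denominator, gives a deterministic bound (the quadratic lower bound kills exactly one power), and on $[0,T-\delta]$ the uniform boundedness of $H$ and $\mathcal H^n$ does the job. The paper instead uses the algebraic identity
\[
\frac{H_t}{T-t}-\frac{\mathcal H^n_t}{T-t+(\eta^\star/n)^{q-1}}
=(T-t)^{p-1}Y_t-\bigl(T-t+(\eta^\star/n)^{q-1}\bigr)^{p-1}Y^n_t,
\]
which comes straight from \eqref{eq:asymp_Y} and \eqref{eq:definition:Hn}, and then bounds each term via the a~priori estimates \eqref{eq:a_priori_estimate} and \eqref{eq:a_priori_estimate_Y_L} on $Y$ and $Y^n$. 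Your route is arguably more natural here since it exploits the lemma just established; the paper's route is more self-contained in that it bypasses Lemma~\ref{lem:bounds_on_H^n} entirely and works only with the original a~priori bounds on the $Y$-processes. Either way the dominating function is a deterministic constant, so the finish is identical.
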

\begin{proof}
For any $0 \leq t < T$ and any $n$, we have 
    \begin{eqnarray*}
        \dfrac{H_t}{T-t} - \dfrac{\mathcal H^n_t}{\left(T-t+ \left( \frac{\eta^\star}{n} \right)^{q-1}\right)} & = & 
        (T-t)^{p-1}Y_t - \left(T-t+\left( \frac{\eta^\star}{n} \right)^{q-1}\right)^{p-1} Y^n_t
    \end{eqnarray*}
    and $Y$ and $Y^n$ are non-negative processes. Using \eqref{eq:a_priori_estimate}, we have
\begin{align*}
0\leq (T-t)^{p-1}Y_t &  \leq \frac{1}{T-t}  \mathbb E^{\mathcal{F}_t} \left( \ \int_t^{T} \left( \frac{1}{\eta_s}  + (T-s+1)^{p} \gamma_s \right) ds \right) \leq \frac{1}{\eta_\star} + (T+1)^{p} \gamma^\star .
\end{align*}
    Now from the a priori estimate \eqref{eq:a_priori_estimate_Y_L}:
\begin{align*}
Y^n_t & \leq \frac{1}{(T-t+\frac{1}{n^{q-1}})^{p}} \left\{ \dfrac{1}{n^{q-1}} + \mathbb E^{\mathcal{F}_t} \left( \ \int_t^{T} \left( \left(  \frac{p-1}{\eta_s} \right)^{p-1} + (T-s+1)^{p} \gamma_s \right) ds \right)  \right\} \\
& \leq \frac{1}{(T-t+\frac{1}{n^{q-1}})^{p}} \left\{ \dfrac{1}{n^{q-1}} + (T-t)  \left( \left(  \frac{p-1}{\eta_\star} \right)^{p-1} + (T+1)^{p} \gamma^\star \right) \right\} . 
\end{align*}
Moreover 
\begin{align*}
\frac{\left(T-t+\left( \dfrac{\eta^\star}{n} \right)^{q-1}\right)^{p-1}}{n^{q-1}\left(T-t+\dfrac{1}{n^{q-1}}\right)^{p}} & =\left(\frac{T-t+\left( \dfrac{\eta^\star}{n} \right)^{q-1}}{T-t+\dfrac{1}{n^{q-1}}} \right)^{p-1} \frac{1}{n^{q-1}\left(T-t+\dfrac{1}{n^{q-1}}\right)} \\
& =\left( 1 + \frac{\left( \eta^\star \right)^{q-1} - 1}{n^{q-1}(T-t)+1} \right)^{p-1} \frac{1}{\left(n^{q-1}(T-t)+1\right)} \leq \eta^\star.
\end{align*}
And
\begin{align*}
\frac{\left(T-t+\left( \dfrac{\eta^\star}{n} \right)^{q-1}\right)^{p-1}}{\left(T-t+\dfrac{1}{n^{q-1}}\right)^{p}} (T-t)& =\left(\frac{T-t+\left( \dfrac{\eta^\star}{n} \right)^{q-1}}{T-t+\dfrac{1}{n^{q-1}}} \right)^{p-1} \frac{T-t}{\left(T-t+\dfrac{1}{n^{q-1}}\right)} \\
& =\left( 1 + \frac{\left( \eta^\star \right)^{q-1} - 1}{n^{q-1}(T-t)+1} \right)^{p-1} \frac{n^{q-1}(T-t)}{\left(n^{q-1}(T-t)+1\right)} \leq \eta^\star.
\end{align*}
Therefore 
$$\left(T-t+\left( \frac{\eta^\star}{n} \right)^{q-1}\right)^{p-1} Y^n_t\leq \eta^\star \left[ 1+ \left( \left(  \frac{p-1}{\eta_\star} \right)^{p-1} + (T+1)^{p} \gamma^\star \right) \right].$$
Since $Y^n$ converges to $Y$ pointwise, the result is a consequence of the dominated convergence theorem.
\end{proof}

Now let us study the Malliavin derivatives. We already know that $Y^n$ is Malliavin differentiable and the solution of the linear BSDE \eqref{eq:BSDE_Malliavin_deriv} is  for $0\leq \theta \leq t \leq T$:
$$D_\theta Y^n_t= \mathbb E^{\mathcal{F}_t} \left[\int_t^T \left(  \left( \dfrac{Y^n_s}{\eta_s} \right)^q D_\theta \eta_s + D_\theta \gamma_s \right)\exp\left( -p \int_t^s \left( \dfrac{Y^n_u}{\eta_u} \right)^{q-1}du\right)  ds \right].$$
\begin{proposition} \label{propo:DYn:Deta:DHn}
    The processes $(\mathcal{H}^n)_{n\in \mathbb{N}}$ are Malliavin differentiable and their Malliavin derivatives satisfy, for any $0\leq \theta \leq t \leq T$:
    $$D_\theta Y^n_t= \dfrac{1}{\left(T-t+ \left(\frac{\eta^\star}{n}\right)^{q-1}\right)^{p-1}} D_\theta \eta_t + \dfrac{1}{\left(T-t+ \left(\frac{\eta^\star}{n}\right)^{q-1}\right)^{p}}D_\theta \mathcal H^n_t.$$
\end{proposition}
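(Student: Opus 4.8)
The plan is to observe that $\mathcal{H}^n$ is nothing but a linear combination, with deterministic time-dependent coefficients, of two processes whose Malliavin differentiability is already known; the result then follows from the linearity of the Malliavin derivative together with the fact that deterministic factors are annihilated by $D_\theta$.

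First I would introduce the shorthand $w_n(t) = T-t+\left(\frac{\eta^\star}{n}\right)^{q-1}$ and record that, for each fixed $n$, the map $t \mapsto w_n(t)$ is deterministic, continuous, and bounded on $[0,T]$, with $w_n(t) \ge \left(\frac{\eta^\star}{n}\right)^{q-1} > 0$; in particular $w_n$ and all its powers are bounded above and bounded away from $0$ on the whole interval $[0,T]$, the terminal value being $w_n(T) = (\eta^\star/n)^{q-1} > 0$. By Proposition \ref{prop:Malliavin_deriv} the process $Y^n$ belongs to $L^2([0,T],\mathbb{D}^{1,2})$, while Assumption \ref{assumption:Malliavin:b:eta:gamma} guarantees that $\eta \in \mathbb{D}^{1,2}$ with a progressively measurable derivative in $L^2(\Omega\times[0,T]\times[0,T])$. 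Since $\mathcal{H}^n_t = w_n(t)^p Y^n_t - w_n(t)\,\eta_t$ and multiplication by the bounded deterministic factors $w_n(t)^p$ and $w_n(t)$ preserves membership in $\mathbb{D}^{1,2}$ together with the required integrability in $t$, the process $\mathcal{H}^n$ is itself in $L^2([0,T],\mathbb{D}^{1,2})$, which gives the first assertion.

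Next I would compute the derivative. Because $w_n(t)$ is deterministic we have $D_\theta w_n(t) = 0$, so the product and sum rules collapse to pure linearity: for $0 \le \theta \le t \le T$,
$$D_\theta \mathcal{H}^n_t = w_n(t)^p\, D_\theta Y^n_t - w_n(t)\, D_\theta \eta_t.$$
Equivalently — and this is precisely the identity in the statement — applying $D_\theta$ directly to the defining relation \eqref{eq:definition:Hn}, namely $Y^n_t = w_n(t)^{-(p-1)}\eta_t + w_n(t)^{-p}\mathcal{H}^n_t$, and again using $D_\theta w_n(t) = 0$, yields
$$D_\theta Y^n_t = \frac{1}{w_n(t)^{p-1}} D_\theta \eta_t + \frac{1}{w_n(t)^{p}} D_\theta \mathcal{H}^n_t,$$
which is the claimed formula. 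The two displayed identities are algebraically equivalent, as one checks at once by substituting the first into the second.

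I do not expect any genuine obstacle here: the whole content is that $\mathcal{H}^n$ is obtained from $Y^n$ and $\eta$ by algebraic operations with deterministic coefficients, and the Malliavin derivative is linear and kills such coefficients. The only point deserving a word of care is integrability, i.e.\ that every manipulation stays within the relevant Malliavin--Sobolev space; this is immediate since $w_n$ is bounded above and below away from zero on all of $[0,T]$ for each fixed $n$, so that no singularity arises from the weights at the terminal time.
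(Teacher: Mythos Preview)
Your proof is correct and follows exactly the same approach as the paper, which simply notes that the result is due to the Malliavin differentiability of $Y^n$ and $\eta$ together with the defining relation \eqref{eq:definition:Hn}. You have merely made explicit the details (deterministic weights bounded away from zero, linearity of $D_\theta$) that the paper leaves implicit in its one-line proof.
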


\begin{proof}
    It is due to the fact that the processes $Y^n$ and $\eta$ are Malliavin differentiable and \eqref{eq:definition:Hn}.
\end{proof}

Evoke that $D_\theta H$ is the solution of the BSDE \eqref{eq:BSDE_Malliavin_deriv_H}:
    $$D_\theta H_t = \int_t^T  \left[ (T-s) D_\theta b^\eta_s  + (T-s)^{p} D_\theta \gamma_s - \dfrac{\partial G}{\partial \eta}(s,H_s,\eta_s) D_\theta \eta_s - \dfrac{\partial G}{\partial h}(s,H_s,\eta_s)  D_\theta H_s \right]ds - \int_t^T D_\theta Z^H_s dW_s.$$
The Malliavin derivative $D_\theta \mathcal H^n$ is the solution of:
    \begin{align*}
    D_\theta \mathcal H^n_t & =- \left(   \dfrac{\eta^\star}{n}\right)^{q-1}  D_\theta\eta_T + \int_t^T  \left[ \left(T-s+\left( \frac{\eta^\star}{n} \right)^{q-1}\right) D_\theta b^\eta_s  + \left(T-s+\left( \frac{\eta^\star}{n} \right)^{q-1}\right)^{p} D_\theta \gamma_s \right] ds \\
    & - \int_t^T  \left[\dfrac{\partial G}{\partial \eta}\left(s-\left( \frac{\eta^\star}{n} \right)^{q-1},\mathcal H^n_s,\eta_s\right) D_\theta \eta_s + \dfrac{\partial G}{\partial h}\left(s-\left( \frac{\eta^\star}{n} \right)^{q-1},\mathcal H^n_s,\eta_s\right)  D_\theta \mathcal H^n_s \right]ds - \int_t^T D_\theta \mathcal Z^n_s dW_s.
    \end{align*}
    
\subsection{Convergence of $D_\theta \mathcal H^n$}
  
Our goal is to prove that $D_\theta \mathcal H^n$ converges to $D_\theta H$. The difference $D_\theta H- D_\theta \mathcal H^n$ satisfies the next equation:
\begin{align} \nonumber
 \Delta^n_t =   D_\theta H_t - D_\theta \mathcal H^n_t & = \left(   \dfrac{\eta^\star}{n}\right)^{q-1}  D_\theta\eta_T -\left(   \dfrac{\eta^\star}{n}\right)^{q-1} \int_t^T  \left[ D_\theta b^\eta_s + p \left( \int_0^1 \left(T-s+ a\left(   \dfrac{\eta^\star}{n}\right)^{q-1} \right) da \right) D_\theta \gamma_s \right] ds \\  \nonumber
    & - \int_t^T \left[ \dfrac{\partial G}{\partial \eta}(s,H_s,\eta_s) - \dfrac{\partial G}{\partial \eta}\left(s-\left( \frac{\eta^\star}{n} \right)^{q-1},\mathcal H^n_s,\eta_s\right) \right] D_\theta \eta_s ds - \int_t^T (D_\theta Z^H_s - D_\theta \mathcal Z^n_s) dW_s\\  \nonumber 
    & -  \int_t^T \left[ \dfrac{\partial G}{\partial h}(s,H_s,\eta_s)  - \dfrac{\partial G}{\partial h}\left(s-\left( \frac{\eta^\star}{n} \right)^{q-1},\mathcal H^n_s,\eta_s\right) \right] D_\theta H_s ds \\ \label{eq:diff_mallia_BSDE}
    & -  \int_t^T  \dfrac{\partial G}{\partial h}\left(s-\left( \frac{\eta^\star}{n} \right)^{q-1},\mathcal H^n_s,\eta_s\right)  \left[ D_\theta H_s - D_\theta \mathcal H^n_s \right] ds.
\end{align}
Our first statement is:
\begin{lemme} \label{lem:convergence:DHn:control_1}
  There exists a constant $\kappa_1>0$ such that a.s. for any $n \geq n_0$ and $s \in [0,T]$
  $$ \dfrac{\partial G}{\partial h}\left(s-\left( \frac{\eta^\star}{n} \right)^{q-1},\mathcal H^n_s,\eta_s\right)  \geq - \kappa_1.$$
\end{lemme}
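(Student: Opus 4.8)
The plan is to start from the explicit formula \eqref{eq:deriv_G_h_0} for $\partial G/\partial h$, which after the time shift $t = s - (\eta^\star/n)^{q-1}$ (turning $T-t$ into $\tau_n(s) := T - s + (\eta^\star/n)^{q-1}$) reduces the claim to a lower bound on
$$\frac{p}{\tau_n(s)}\left(\left|1 + \frac{\mathcal H^n_s}{\eta_s \tau_n(s)}\right|^{q-1} - 1\right).$$
Writing $x_n(s) = \mathcal H^n_s/(\eta_s\tau_n(s))$, I would first dispose of the easy case: when $\mathcal H^n_s \geq 0$ one has $x_n(s) \geq 0$, so $|1+x_n(s)|^{q-1} \geq 1$ and the expression is non-negative. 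Thus everything reduces to controlling the region where $\mathcal H^n_s < 0$.

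The core of the argument is on $[T-\delta, T]$. There I would invoke the lower bound $\mathcal H^n_s \geq -C_1\tau_n(s)^2$ of Lemma \ref{lem:bounds_on_H^n}, which gives $x_n(s) \geq -C_1\tau_n(s)/\eta_\star$; by the choice of $\delta$ and $n_0$ (condition \eqref{eq:first_cond_control_H_n}) this is $\geq -1/2$, so that $1 + x_n(s) \geq 1/2$ and $|x_n(s)| \leq C_1\tau_n(s)/\eta_\star$. Then I would expand, exactly as in the integral representations used above,
$$\left|1 + x_n(s)\right|^{q-1} - 1 = (q-1)\,x_n(s)\int_0^1 \left(1 + a\,x_n(s)\right)^{q-2}\,da,$$
bound the integrand by $2^{|q-2|}$ (legitimate since $1 + a\,x_n(s) \in [1/2,1]$ for $a\in[0,1]$), and conclude $\big||1+x_n(s)|^{q-1}-1\big| \leq (q-1)\,2^{|q-2|}C_1\tau_n(s)/\eta_\star$. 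Dividing by $\tau_n(s)$ and using the identity $p(q-1)=q$ then yields the bound $-qC_1 2^{|q-2|}/\eta_\star$ on this interval.

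On $[0,T-\delta]$ the estimate is immediate: there $\tau_n(s)\geq\delta$ and $|1+x_n(s)|^{q-1}\geq 0$, so the quantity is $\geq -p/\tau_n(s) \geq -p/\delta$ for every $n$. Taking $\kappa_1 = \max\big(qC_1 2^{|q-2|}/\eta_\star,\ p/\delta\big)$ finishes the proof.

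The hard part is the behaviour near $T$: the prefactor $1/\tau_n(s)$ blows up there, and the lemma holds only because the \emph{quadratic}-in-$\tau_n$ lower bound on $\mathcal H^n$ forces $x_n(s)$ to vanish at rate $\tau_n(s)$, so that the bracket $|1+x_n(s)|^{q-1}-1$ is $O(\tau_n(s))$ and exactly cancels the singularity. The positivity $1+x_n(s)\geq 1/2$, itself a consequence of the calibration \eqref{eq:first_cond_control_H_n} of $\delta$ and $n_0$, is what makes the integral representation safe in the regime $q<2$, where $|1+x|^{q-2}$ would otherwise be singular.
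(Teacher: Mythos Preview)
Your proof is correct and follows essentially the same approach as the paper: both split into $[T-\delta,T]$ (where the quadratic lower bound on $\mathcal H^n$ from Lemma~\ref{lem:bounds_on_H^n} makes $x_n(s)$ of order $\tau_n(s)$ and keeps $1+ax_n(s)\geq 1/2$, allowing the integral representation to cancel the $1/\tau_n(s)$ singularity) and $[0,T-\delta]$ (where $\tau_n(s)\geq\delta$ gives a crude bound). Your treatment on $[0,T-\delta]$ is actually a little cleaner than the paper's---you use only $|1+x_n(s)|^{q-1}\geq 0$ rather than invoking the uniform bound on $|\mathcal H^n|$---but the argument is otherwise the same.
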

\begin{proof}
Evoke that from  \eqref{eq:deriv_G_h}
$$\dfrac{\partial G}{\partial h}(t,h,\eta) = \dfrac{qh}{\eta (T-t)^2} \int_0^1\left| 1 + a\dfrac{1}{\eta (T-t)} h\right|^{q-2} \mbox{sign } \left(1 + a\dfrac{1}{\eta (T-t)} h\right)  da.$$
Lemma \ref{lem:bounds_on_H^n} implies that for any $T-\delta \leq s \leq T$ and $n \geq n_0$:
\begin{equation}\label{eq:estim_diff_deriv_part_h}
-\dfrac{C_1 \zeta^\star}{\eta_\star}\leq -\dfrac{C_1 \zeta^\star}{\eta_s}\leq \dfrac{\partial G}{\partial h}\left(s-\left( \frac{\eta^\star}{n} \right)^{q-1},\mathcal H^n_s,\eta_s\right) \leq  \dfrac{C_2 \zeta^\star}{\eta_s \left(T-s+\left( \frac{\eta^\star}{n} \right)^{q-1}\right)}.
\end{equation}
Now from \eqref{eq:deriv_G_h_0} and since $\mathcal H^n$ is bounded by $C$, if $s \leq T-\delta$
 \begin{align*}
\left|   \dfrac{\partial G}{\partial h}\left(s-\left( \frac{\eta^\star}{n} \right)^{q-1},\mathcal H^n_s,\eta_s\right) \right| & = \dfrac{p}{ (T-s+\left( \frac{\eta^\star}{n} \right)^{q-1})} \left| \left| 1 + \dfrac{1}{\eta_s (T-s+\left( \frac{\eta^\star}{n} \right)^{q-1})} \mathcal H^n_s\right|^{q-1} -  1 \right| \\
& \leq \dfrac{p}{\delta}  \left( \left( 1 + \dfrac{C}{\eta_\star (T-\delta} \right)^{q-1} +  1 \right). 
\end{align*}
\end{proof}

The second result concerns the control of the next difference terms. 
\begin{lemme}  \label{lem:convergence:DHn:control_2}
There exist two processes $\Upsilon^n$ and $\widetilde \Upsilon^n$ such that 
\begin{equation}  \label{eq:estim_diff_deriv_eta_0}
\left| \dfrac{\partial G}{\partial \eta}(s,H_s,\eta_s) - \dfrac{\partial G}{\partial \eta}\left(s-\left( \frac{\eta^\star}{n} \right)^{q-1},\mathcal H^n_s,\eta_s\right)  \right| \leq \Upsilon^n_s \left| \dfrac{H_s}{(T-s)} - \dfrac{\mathcal H^n_s}{(T-s+(\eta^\star/n)^{q-1})}\right|^{(q-1)\wedge 1}
\end{equation}
and
\begin{align} \nonumber 
& \left|  \dfrac{\partial G}{\partial h}(s,H_s,\eta_s) - \dfrac{\partial G}{\partial h}\left(s-\left( \frac{\eta^\star}{n} \right)^{q-1},\mathcal H^n_s,\eta_s\right)  \right|\\ \label{eq:estim_diff_deriv_h_0}
& \quad  \leq \dfrac{\widetilde \Upsilon^n_s}{(T-s)} \left[ \left| \dfrac{H_s}{(T-s)} - \dfrac{\mathcal H^n_s}{(T-s+(\eta^\star/n)^{q-1})}\right|^{(q-1) \wedge 1} +\left( \frac{\eta^\star}{n} \right)^{q-1} \dfrac{p}{(T-s+(\eta^\star/n)^{q-1})}  \right]. 
\end{align}
And $\Upsilon^n$ and $\widetilde \Upsilon^n$  are bounded uniformly w.r.t $s\in [0,T]$ and $n\geq n_0$. 
\end{lemme}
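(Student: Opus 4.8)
The plan is to exploit the fact that, by the closed forms \eqref{eq:deriv_G_h_0} and \eqref{eq:deriv_G_eta_0}, both partial derivatives depend on $(t,h,\eta)$ only through the single reduced variable $z=\frac{h}{\eta(T-t)}$, with for $\partial G/\partial h$ an additional explicit prefactor $1/(T-t)$. Writing $z_s=\frac{H_s}{\eta_s(T-s)}$, $z^n_s=\frac{\mathcal H^n_s}{\eta_s\,(T-s+(\eta^\star/n)^{q-1})}$ and $D_s=\frac{H_s}{T-s}-\frac{\mathcal H^n_s}{T-s+(\eta^\star/n)^{q-1}}$, one has $z_s-z^n_s=D_s/\eta_s$, so that (since $\eta_s\geq\eta_\star$) the quantity driving both differences is exactly the one shown to vanish in Lemma \ref{lem:convergence_H_n}. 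The whole proof then reduces to a modulus-of-continuity estimate for the scalar maps $x\mapsto|x|^{q-1}$ and $x\mapsto x|x|^{q-1}$, combined with the uniform bounds of Lemma \ref{lem:bounds_on_H^n}.

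First I would record that on any fixed compact set both $x\mapsto|x|^{q-1}$ and $x\mapsto x|x|^{q-1}=|x|^q\,\mathrm{sign}(x)$ are H\"older continuous of order $(q-1)\wedge1$: for $q\geq2$ they are Lipschitz on bounded sets, whereas for $1<q<2$ the cusp of $|x|^{q-1}$ at the origin only yields the exponent $q-1$. Then I would check that $z_s$ and $z^n_s$, hence $1+z_s$ and $1+z^n_s$, stay in a fixed compact set uniformly in $s\in[0,T]$ and $n\geq n_0$: on $[T-\delta,T]$ the smallness \eqref{eq:choix:delta} gives $|z_s|\leq\frac{1}{2}$, while Lemma \ref{lem:bounds_on_H^n} together with the choice \eqref{eq:def_constants} gives $-\frac{1}{2}\leq z^n_s\leq C_2/\eta_\star$; on $[0,T-\delta]$ the boundedness of $H$ and $\mathcal H^n$ together with $T-s\geq\delta$ bound both. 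This makes every H\"older constant appearing below uniform in $(s,n)$.

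For $\partial G/\partial\eta$, the form \eqref{eq:deriv_G_eta_0} reads $\frac{\partial G}{\partial\eta}(t,h,\eta)=\Psi(z)$ with $\Psi$ built only from $|x|^{q-1}$ and $x|x|^{q-1}$ and carrying no explicit time factor; the difference is thus $\Psi(z_s)-\Psi(z^n_s)$, and the H\"older estimate applied with $|z_s-z^n_s|=|D_s|/\eta_s$ gives \eqref{eq:estim_diff_deriv_eta_0} with a uniformly bounded $\Upsilon^n$. For $\partial G/\partial h$, the form \eqref{eq:deriv_G_h_0} reads $\frac{p}{T-t}(|1+z|^{q-1}-1)$, and the prefactor differs between the two evaluation times $T-s$ and $T-s+(\eta^\star/n)^{q-1}$; I would use the algebraic split
\begin{align*}
& \frac{\partial G}{\partial h}(s,H_s,\eta_s)-\frac{\partial G}{\partial h}\!\left(s-\Big(\tfrac{\eta^\star}{n}\Big)^{q-1},\mathcal H^n_s,\eta_s\right) \\
& \quad = \frac{p}{T-s}\Big(|1+z_s|^{q-1}-|1+z^n_s|^{q-1}\Big)
+ p\big(|1+z^n_s|^{q-1}-1\big)\Big(\tfrac{1}{T-s}-\tfrac{1}{T-s+(\eta^\star/n)^{q-1}}\Big).
\end{align*}
The first summand is bounded by $\frac{C}{T-s}|D_s|^{(q-1)\wedge1}$ through the H\"older estimate; in the second, the prefactor gap equals $\frac{(\eta^\star/n)^{q-1}}{(T-s)(T-s+(\eta^\star/n)^{q-1})}$ while $|1+z^n_s|^{q-1}-1$ is bounded. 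Collecting the two contributions yields exactly the two-term right-hand side of \eqref{eq:estim_diff_deriv_h_0} with $\widetilde\Upsilon^n$ uniformly bounded.

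The main obstacle is the non-smoothness in the subquadratic range $1<q<2$. On $[0,T-\delta]$ the argument $1+z$ is no longer confined near $1$ and may approach zero (indeed $1+z_s=(T-s)^{p-1}Y_s/\eta_s$ vanishes where $Y_s=0$), so $|1+z|^{q-1}$ is genuinely only H\"older-$(q-1)$ there; this is what forbids a Lipschitz bound and produces the exponent $(q-1)\wedge1$ in the statement. The care needed is therefore to keep all constants uniform in $s\in[0,T]$ and $n\geq n_0$—which is precisely the content of Lemma \ref{lem:bounds_on_H^n}—while handling the singular $1/(T-s)$ prefactor of $\partial G/\partial h$ through the clean algebraic split above rather than by any differentiation in time.
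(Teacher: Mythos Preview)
Your proposal is correct and follows essentially the same approach as the paper: both reduce to the single variable $z=h/(\eta(T-t))$, invoke the H\"older modulus of $x\mapsto|x|^{q-1}$ and $x\mapsto x|x|^{q-1}$ (Lipschitz on $[T-\delta,T]$ where $1+z$ stays away from $0$, genuinely $(q-1)$-H\"older on $[0,T-\delta]$ when $q<2$), and use the identical algebraic split of the $\partial G/\partial h$ difference into the $|1+z|^{q-1}$ increment plus the prefactor gap $(T-s)^{-1}-(T-s+(\eta^\star/n)^{q-1})^{-1}$. The uniform compactness of $z_s,z^n_s$ via Lemma~\ref{lem:bounds_on_H^n} and \eqref{eq:choix:delta} is exactly what the paper uses to make the constants uniform in $(s,n)$.
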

\begin{proof}
For $(s,h)$ and $(u,\hat h)$ and if 
$$\delta = \dfrac{h}{\eta (T-s)}- \dfrac{\hat h}{\eta (T-u)}$$
then from \eqref{eq:deriv_G_eta_0} and \eqref{eq:deriv_G_eta}
\begin{align} \nonumber 
&  \dfrac{\partial G}{\partial \eta}(s,h,\eta) - \dfrac{\partial G}{\partial \eta}\left(u,\hat h,\eta\right)  \\ \label{eq:repres_diff_deriv_eta}
& \quad = -q \delta \int_0^1 \left| 1 + \dfrac{\hat h}{\eta (T-u)} + a \delta \right|^{q-1}  da + p  \left| 1 + \dfrac{h}{\eta (T-s)} \right|^{q-1} -p  \left| 1 + \dfrac{\hat h}{\eta (T-u)} \right|^{q-1}  \\  \nonumber
& \quad =  -q \delta \int_0^1 \left| 1 + \dfrac{\hat h}{\eta (T-u)} + a \delta \right|^{q-1}  da  + p (q-1) \delta \int_0^1 \left| 1 + \dfrac{\hat h}{\eta (T-u)} + a \delta \right|^{q-2}  \mbox{sign }  \left( 1 + \dfrac{\hat h}{\eta (T-u)} + a \delta\right) da \\
& \quad = -q \delta \int_0^1 \left| 1 + \dfrac{\hat h}{\eta (T-u)} + a \delta \right|^{q-2} \left( \dfrac{\hat h}{\eta (T-u)} + a \delta \right)  \mbox{sign }  \left( 1 + \dfrac{\hat h}{\eta (T-u)} + a \delta\right)   da
\end{align}
provided that there is no division by zero when $q < 2$. 

If $(s,h)$ and $(u,\hat h)$ are chosen such that 
$$\dfrac{h}{\eta (T-s)} > -\dfrac{1}{2}, \quad \dfrac{\hat h}{\eta (T-u)} > -\dfrac{1}{2},$$
for any $a \in [0,1]$
$$1 + \dfrac{\hat h}{\eta (T-u)} + a \delta  = 1+ (1-a)\dfrac{\hat h}{\eta (T-u)} + a \dfrac{h}{\eta (T-s)} > \dfrac{1}{2}.$$
Evoke that  from our choice of $\delta$ and $n_0$, 
$$\dfrac{H_s}{\eta_s (T-s)} > -\dfrac{1}{2}, \quad \dfrac{\mathcal H^n_s}{\eta_s(T-s+(\eta^\star/n)^{q-1})} > -\dfrac{1}{2},$$
and from Lemma \ref{lem:bounds_on_H^n}
$$\dfrac{1}{2} \leq  1 +\dfrac{\mathcal H^n_s}{\eta_s(T-s+(\eta^\star/n)^{q-1})} + a \delta^n_s  \leq 1 + \dfrac{C_2+R}{\eta_\star},$$
with 
$$\delta^n_s = \dfrac{H_s}{\eta_s (T-s)} - \dfrac{\mathcal H^n_s}{\eta_s(T-s+(\eta^\star/n)^{q-1})} . $$
Hence we deduce that for any $T-\delta \leq s \leq T$ and $n \geq n_0$
$$ \dfrac{\partial G}{\partial \eta}(s,H_s,\eta_s) - \dfrac{\partial G}{\partial \eta}\left(s-\left( \frac{\eta^\star}{n} \right)^{q-1},\mathcal H^n_s,\eta_s\right)  =  \Upsilon^n_s  \delta^n_s $$
with the bounded process
$$ \Upsilon^n_s = -q \int_0^1 \left( 1 +\dfrac{\mathcal H^n_s}{\eta_s(T-s+(\eta^\star/n)^{q-1})} + a\delta^n_s  \right)^{q-2} \left( \dfrac{\mathcal H^n_s}{\eta_s(T-s+(\eta^\star/n)^{q-1})}+ a\delta^n_s \right)  da .$$
On the rest of the time interval $[0,T-\delta]$, since $s \mapsto 1/(T-s)$, $H$ and $\mathcal H^n$ are bounded, from \eqref{eq:repres_diff_deriv_eta}, we easily deduce that if $q \geq 2$, the functions are Lipschitz continuous and thus
$$\left| \dfrac{\partial G}{\partial \eta}(s,H_s,\eta_s) - \dfrac{\partial G}{\partial \eta}\left(s-\left( \frac{\eta^\star}{n} \right)^{q-1},\mathcal H^n_s,\eta_s\right) \right| \leq \Upsilon^n_s |\delta^n_s|.$$
But for $1 < q < 2$, since $x\mapsto |x|^{q-1}$ is $(q-1)$-H\"older continuous, we only obtain that 
$$\left| \dfrac{\partial G}{\partial \eta}(s,H_s,\eta_s) - \dfrac{\partial G}{\partial \eta}\left(s-\left( \frac{\eta^\star}{n} \right)^{q-1},\mathcal H^n_s,\eta_s\right) \right| \leq \Upsilon^n_s ( |\delta^n_s|^{q-1} \vee |\delta^n_s|) .$$

Similarly with \eqref{eq:deriv_G_h_0} and the same previous notations as previously:
\begin{align*}
&  \dfrac{\partial G}{\partial h}(s,h,\eta) - \dfrac{\partial G}{\partial h}\left(u,\hat h,\eta\right)  = \dfrac{p}{ (T-s)} \left| 1 + \dfrac{h}{\eta (T-s)} \right|^{q-1} -\dfrac{p}{ (T-u)} \left| 1 + \dfrac{\hat h}{\eta (T-u)} \right|^{q-1} \\
& \quad =  \dfrac{p}{ (T-s)} \left| 1 + \dfrac{h}{\eta (T-s)} \right|^{q-1} -\dfrac{p}{ (T-s)} \left| 1 + \dfrac{\hat h}{\eta (T-u)} \right|^{q-1} + \left(  \dfrac{p}{ (T-s)} -  \dfrac{p}{ (T-u)}\right)  \left| 1 + \dfrac{\hat h}{\eta (T-u)} \right|^{q-1}  \\
& \quad =  \dfrac{p (q-1)}{ (T-s)}  \delta \int_0^1 \left| 1 + \dfrac{\hat h}{\eta (T-u)} + a \delta \right|^{q-2} \mbox{sign }\left( 1 + \dfrac{\hat h}{\eta (T-u)} + a \delta\right)  da  \dfrac{p(s-u)}{ (T-s)(T-u)} \left| 1 + \dfrac{\hat h}{\eta (T-u)} \right|^{q-1} .
\end{align*}
Hence there exists a uniformly bounded process $\widetilde \Upsilon^n$ such that a.s. on $[T-\delta,T]$ and for $n \geq n_0$
\begin{align*}
&  \dfrac{\partial G}{\partial h}(s,H_s,\eta_s) - \dfrac{\partial G}{\partial h}\left(s-\left( \frac{\eta^\star}{n} \right)^{q-1},\mathcal H^n_s,\eta_s\right)  \\
& \quad  = \dfrac{\widetilde \Upsilon^n_s}{(T-s)} \left[ \left( \dfrac{H_s}{(T-s)} - \dfrac{\mathcal H^n_s}{(T-s+(\eta^\star/n)^{q-1})}\right) +\left( \frac{\eta^\star}{n} \right)^{q-1} \dfrac{p}{(T-s+(\eta^\star/n)^{q-1})}  \right]. 
\end{align*}
And on $[0,T-\delta]$, we obtain the inequality by Lipschitz-continuity if $q \geq 2$ or H\"older-continuity if $q < 2$, and by boundedness of $H$ and $\mathcal H^n$. 
\end{proof}

Let us start with the next result:
\begin{lemme} \label{lem:convergence:DHn}
    Assume that for some $\varrho > 1$,
    $$\mathbb E \int_0^T  \left[ |D_\theta \eta_T|^\varrho +  \int_0^T \left( |D_\theta b^\eta_s|^\varrho + |D_\theta \gamma_s|^\varrho + |D_\theta \eta_s|^\varrho \right) ds\right] d\theta < +\infty. $$
    Then for any $0\leq \theta \leq t \leq T$ $D_\theta \mathcal H^n_t$  converges a.s. to $D_\theta H_t$ and for $\ell < \varrho$, $D \mathcal H^n$ converges to $D H$ in $L^\ell(\Omega \times [0,T]^2)$. 
\end{lemme}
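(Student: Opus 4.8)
The plan is to solve the linear BSDE \eqref{eq:diff_mallia_BSDE} for $\Delta^n = D_\theta H - D_\theta \mathcal H^n$ explicitly and then show that every term in the resulting representation vanishes as $n\to+\infty$. First I would identify $\Delta^n$ as the solution of a linear BSDE whose terminal value is $\Delta^n_T = (\eta^\star/n)^{q-1}D_\theta\eta_T$ (using $H_T=0$ and $D_\theta H_T = 0$ from \eqref{eq:estim_Mallia_deriv_H}, together with the explicit form of $\mathcal H^n_T$) and whose generator is linear in $\Delta^n$ with coefficient $-\frac{\partial G}{\partial h}(s-(\eta^\star/n)^{q-1},\mathcal H^n_s,\eta_s)$. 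Introducing the integrating factor
$$\Gamma^n_{t,s} = \exp\left(-\int_t^s \frac{\partial G}{\partial h}\left(u-\left(\frac{\eta^\star}{n}\right)^{q-1},\mathcal H^n_u,\eta_u\right) du\right),$$
the standard linear BSDE representation gives $\Delta^n_t = \mathbb E^{\mathcal F_t}\left[\Delta^n_T\,\Gamma^n_{t,T} + \int_t^T A^n_s\,\Gamma^n_{t,s}\,ds\right]$, where $A^n_s$ collects the remaining source terms of \eqref{eq:diff_mallia_BSDE}. By Lemma \ref{lem:convergence:DHn:control_1} one has $\frac{\partial G}{\partial h}(\cdots)\geq -\kappa_1$, whence $0<\Gamma^n_{t,s}\leq e^{\kappa_1 T}$ uniformly in $n,t,s,\omega$; this uniform bound is what makes the representation exploitable.

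Next I would estimate the three groups of source terms. The terms carrying the explicit prefactor $(\eta^\star/n)^{q-1}$ (namely $\Delta^n_T$ and the $D_\theta b^\eta$, $D_\theta\gamma$ contributions) are bounded by $(\eta^\star/n)^{q-1}$ times data that is integrable by hypothesis, and hence vanish. For the $\frac{\partial G}{\partial\eta}$-difference term I would invoke Lemma \ref{lem:convergence:DHn:control_2} to bound it by $\Upsilon^n_s|\delta^n_s|^{(q-1)\wedge 1}|D_\theta\eta_s|$, where $\delta^n_s = \frac{H_s}{\eta_s(T-s)} - \frac{\mathcal H^n_s}{\eta_s(T-s+(\eta^\star/n)^{q-1})}$; since $\Upsilon^n$ is uniformly bounded and $\delta^n$ is uniformly bounded (by Lemma \ref{lem:bounds_on_H^n}) and converges to $0$ in the sense of Lemma \ref{lem:convergence_H_n}, a H\"older inequality in $s$ (with the exponent on $\delta^n$ chosen at least $1$, so that boundedness plus $L^1$-convergence of $\int_0^T|\delta^n_s|\,ds$ gives the convergence) against the integrability of $D_\theta\eta$ forces this term to $0$.

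The main obstacle is the $\frac{\partial G}{\partial h}$-difference term, which by Lemma \ref{lem:convergence:DHn:control_2} carries a genuine $\frac{1}{T-s}$ singularity. Here I would use the sharp estimate \eqref{eq:estim_Mallia_deriv_H}, namely $|D_\theta H_s|\leq C(T-s)\,\mathbb E^{\mathcal F_s}\big[\int_s^T(|D_\theta b^\eta_r|+|D_\theta\gamma_r|+|D_\theta\eta_r|)\,dr\big]$, so that the factor $(T-s)$ cancels the singularity exactly; the remaining conditional expectation is harmless, and the bracket again reduces to $|\delta^n_s|^{(q-1)\wedge 1}$ plus a term of order $(\eta^\star/n)^{q-1}$. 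Once every term is written as a product of a uniformly bounded, a.s.-vanishing factor and an integrable factor, dominated convergence inside the conditional expectation of the representation yields the a.s. convergence of $D_\theta\mathcal H^n_t$ to $D_\theta H_t$ for each fixed $(\theta,t)$. The $L^\ell$ convergence on $\Omega\times[0,T]^2$ for $1<\ell<\varrho$ follows from the same bounds: applying Jensen and Doob's inequality to the conditional expectation reduces $\mathbb E\int_0^T\int_0^T|\Delta^n_t|^\ell\,d\theta\,dt$ to the $L^\ell$ norms of the data, and a final H\"older step transfers the $L^\varrho$-convergence of $\delta^n$ from Lemma \ref{lem:convergence_H_n} and the integrability assumption on the Malliavin derivatives into the desired limit $0$.
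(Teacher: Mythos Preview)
Your proposal is correct and follows essentially the same route as the paper: solve the linear BSDE \eqref{eq:diff_mallia_BSDE} via the integrating factor $\Gamma^n_{t,s}$ (bounded uniformly by Lemma \ref{lem:convergence:DHn:control_1}), estimate each source term using Lemma \ref{lem:convergence:DHn:control_2}, and use the factor $(T-s)$ in \eqref{eq:estim_Mallia_deriv_H} to kill the $\frac{1}{T-s}$ singularity in the $\partial_h G$-difference before applying H\"older and dominated convergence. The only place where the paper is slightly more explicit than your sketch is the residual term $(\eta^\star/n)^{q-1}\int_t^T \frac{\zeta_s}{T-s+(\eta^\star/n)^{q-1}}\,ds$: the paper performs an integration by parts in $s$ to bound this integral by $C\zeta_t + \mathbb E^{\mathcal F_t}\int_t^T |\ln(T-s)|\,\varpi_s\,ds$ uniformly in $n$, after which the prefactor $(\eta^\star/n)^{q-1}$ does the rest---you should make this step explicit rather than calling the term ``of order $(\eta^\star/n)^{q-1}$''.
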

\begin{proof}
From Lemma \ref{lem:convergence:DHn:control_1}
$$\Gamma^n_{t,s} = \exp \left[ - \int_t^s  \dfrac{\partial G}{\partial h}\left(u-\left( \frac{\eta^\star}{n} \right)^{q-1},\mathcal H^n_u,\eta_u\right) du \right],$$
is bounded uniformly w.r.t. $n\geq n_0$ by $\exp( T \kappa_1)$. 
 Since \eqref{eq:diff_mallia_BSDE} is a linear BSDE, we have:
\begin{align} \nonumber
    D_\theta H_t - D_\theta \mathcal H^n_t & =\left(   \dfrac{\eta^\star}{n}\right)^{q-1} \mathbb E^{\mathcal{F}_t} \left[  D_\theta\eta_T \Gamma^n_{t,T} \right] \\ \nonumber
    & -\left(   \dfrac{\eta^\star}{n}\right)^{q-1}   \mathbb E^{\mathcal{F}_t} \left[   \int_t^T  \left[ D_\theta b^\eta_s + p \left( \int_0^1 \left(T-s+ a\left(   \dfrac{\eta^\star}{n}\right)^{q-1} \right) da \right) D_\theta \gamma_s \right] \Gamma_{t,s}^n ds \right] \\  \nonumber
    & - \mathbb E^{\mathcal{F}_t} \left[  \int_t^T \left[ \dfrac{\partial G}{\partial \eta}(s,H_s,\eta_s) - \dfrac{\partial G}{\partial \eta}\left(s-\left( \frac{\eta^\star}{n} \right)^{q-1},\mathcal H^n_s,\eta_s\right) \right] D_\theta \eta_s  \Gamma_{t,s}^n ds \right]\\   \label{eq:diff_mallia_BSDE_2}
    & -  \mathbb E^{\mathcal{F}_t} \left[  \int_t^T \left[ \dfrac{\partial G}{\partial h}(s,H_s,\eta_s)  - \dfrac{\partial G}{\partial h}\left(s-\left( \frac{\eta^\star}{n} \right)^{q-1},\mathcal H^n_s,\eta_s\right) \right] D_\theta H_s  \Gamma_{t,s}^n ds \right] .
\end{align}
In the right-hand side of \eqref{eq:diff_mallia_BSDE_2}, the first two terms satisfy for any $0\leq \theta \leq t \leq T$ and $n \geq n_0$
\begin{equation}  \label{eq:estim_term_cond_1}
\left(   \dfrac{\eta^\star}{n}\right)^{q-1}  \left| \mathbb E^{\mathcal{F}_t} \left[  D_\theta\eta_T \Gamma^n_{t,T} \right] \right| \leq C \left(   \dfrac{\eta^\star}{n}\right)^{q-1}  \mathbb E^{\mathcal{F}_t} \left[ |D_\theta\eta_T| \right]
\end{equation}
and 
\begin{align}\nonumber
&\left(   \dfrac{\eta^\star}{n}\right)^{q-1}  \left| \mathbb E^{\mathcal{F}_t} \left[   \int_t^T  \left[ D_\theta b^\eta_s + p \left( \int_0^1 \left(T-s+ a\left(   \dfrac{\eta^\star}{n}\right)^{q-1} \right) da \right) D_\theta \gamma_s \right] \Gamma_{t,s}^n ds \right] \right| \\ \nonumber 
& \quad \leq  C \left( \dfrac{\eta^\star}{n}\right)^{q-1} \mathbb E^{\mathcal{F}_t} \left[ \int_t^T  \left( |D_\theta b^\eta_s| + p \left(T+ (\eta^\star)^{q-1} \right) |D_\theta \gamma_s| \right) ds \right] \\  \label{eq:estim_term_cond_2}
& \quad \leq  C \left(   \dfrac{\eta^\star}{n}\right)^{q-1} \mathbb E^{\mathcal{F}_t} \left[ \int_t^T  \left( |D_\theta b^\eta_s| +  |D_\theta \gamma_s| \right) ds  \right].
\end{align}
From \eqref{eq:estim_diff_deriv_eta_0}, we obtain that 
\begin{align} \nonumber
& \left|  \mathbb E^{\mathcal{F}_t} \left[  \int_t^T \left[ \dfrac{\partial G}{\partial \eta}(s,H_s,\eta_s) - \dfrac{\partial G}{\partial \eta}\left(s-\left( \frac{\eta^\star}{n} \right)^{q-1},\mathcal H^n_s,\eta_s\right) \right] D_\theta \eta_s  \Gamma_{t,s}^n ds \right] \right| \\ \nonumber
& \quad \leq \mathbb E^{\mathcal{F}_t}\left[  \int_t^T \Upsilon^n_s \left| \dfrac{H_s}{(T-s)} - \dfrac{\mathcal H^n_s}{(T-s+(\eta^\star/n)^{q-1})}\right|^{(q-1)\wedge 1} |D_\theta \eta_s|  \Gamma_{t,s}^n ds \right] \\ \label{eq:estim_diff_deriv_part_eta}
& \quad \leq C \mathbb E^{\mathcal{F}_t} \left[  \int_t^T  \left| \dfrac{H_s}{(T-s)} - \dfrac{\mathcal H^n_s}{(T-s+(\eta^\star/n)^{q-1})}\right|^{(q-1)\wedge 1} |D_\theta \eta_s| ds \right] .
\end{align}
From \eqref{eq:estim_Mallia_deriv_H}, we have
$|D_\theta H_s | \leq C (T-s) \zeta_s,$
where 
$$\zeta_s=  \mathbb E^{\mathcal{F}_s} \left[\int_s^T  \left( | D_\theta b^\eta_u|  +  |D_\theta \gamma_u| + | D_\theta \eta_u | \right)  du \right] = \mathbb E^{\mathcal{F}_s} \left[\int_s^T  \varpi_u du \right].$$
Thus using \eqref{eq:estim_diff_deriv_h_0}
\begin{align*}\nonumber
& \left| \dfrac{\partial G}{\partial h}(s,H_s,\eta_s) - \dfrac{\partial G}{\partial h}\left(s-\left( \frac{\eta^\star}{n} \right)^{q-1},\mathcal H^n_s,\eta_s\right) \right| |D_\theta H_s | \\ 
& \quad  \leq C \zeta_s |\widetilde \Upsilon^n_s| \left| \dfrac{H_s}{(T-s)} - \dfrac{\mathcal H^n_s}{(T-s+(\eta^\star/n)^{q-1})}\right|^{(q-1)\wedge 1}+ C p |\widetilde \Upsilon^n_s|  \left( \frac{\eta^\star}{n} \right)^{q-1} \dfrac{\zeta_s}{(T-s+(\eta^\star/n)^{q-1})}  
\end{align*}
and
\begin{align}\nonumber
& \left| \mathbb E^{\mathcal{F}_t} \left[  \int_t^T \left[ \dfrac{\partial G}{\partial h}(s,H_s,\eta_s)  - \dfrac{\partial G}{\partial h}\left(s-\left( \frac{\eta^\star}{n} \right)^{q-1},\mathcal H^n_s,\eta_s\right) \right] D_\theta H_s  \Gamma_{t,s}^n ds \right] \right| \\ \nonumber
&\quad  \leq  C \mathbb E^{\mathcal{F}_t} \left[  \int_t^T  \zeta_s  \left| \dfrac{H_s}{(T-s)} - \dfrac{\mathcal H^n_s}{(T-s+(\eta^\star/n)^{q-1})}\right|^{(q-1)\wedge 1} ds \right]  \\ \label{eq:estim_diff_deriv_part_h_2}
& \qquad +p C \left( \frac{\eta^\star}{n} \right)^{q-1}  \mathbb E^{\mathcal{F}_t} \left[  \int_t^T   \dfrac{\zeta_s}{(T-s+(\eta^\star/n)^{q-1})} ds \right] . 
\end{align}
For the second one 
\begin{align*}
    0 & \leq \mathbb E^{\mathcal{F}_t} \left[\int_t^T  \frac{\zeta_s}{T-s+ (\eta^\star/n)^{q-1}}  ds  \right]  =  \mathbb E^{\mathcal{F}_t} \left[\int_t^T  \frac{1}{T-s+ (\eta^\star/n)^{q-1}} \left( \int_s^T  \varpi_u du \right) ds \right] \\
    & = \ln(T-t+(\eta^\star/n)^{q-1}) \zeta_t  - \mathbb E^{\mathcal{F}_t} \left[\int_t^T \ln(T-s+(\eta^\star/n)^{q-1}) \varpi_s ds \right]\\
    & \leq  \ln(T+(\eta^\star)^{q-1}) \zeta_t  + \mathbb E^{\mathcal{F}_t} \left[\int_t^T |\ln(T-s)| \varpi_s ds \right] .
\end{align*}
Coming back to \eqref{eq:diff_mallia_BSDE_2}, with \eqref{eq:estim_term_cond_1}, \eqref{eq:estim_term_cond_2}, \eqref{eq:estim_diff_deriv_part_eta}, \eqref{eq:estim_diff_deriv_part_h_2}, we obtain that for $0\leq \theta \leq t \leq T$
\begin{align} \nonumber
   & \left| D_\theta H_t - D_\theta \mathcal H^n_t  \right| \leq C \left(   \dfrac{\eta^\star}{n}\right)^{q-1} \mathbb E^{\mathcal{F}_t} \left[ |D_\theta\eta_T| + 2 \int_t^T  \left( |D_\theta b^\eta_s| + |D_\theta \gamma_s| + | D_\theta \eta_s|\right) ds  \right] \\ \nonumber
    &+ C \left( \frac{\eta^\star}{n} \right)^{q-1}  \mathbb E^{\mathcal{F}_t} \left[\int_t^T |\ln(T-s)| \varpi_s ds \right]  + C \mathbb E^{\mathcal{F}_t} \left[  \int_t^T  \left| \dfrac{H_s}{(T-s)} - \dfrac{\mathcal H^n_s}{(T-s+(\eta^\star/n)^{q-1})}\right|^{(q-1)\wedge 1} |D_\theta \eta_s|  ds \right] \\   \label{eq:diff_DH_1}
    & + C \mathbb E^{\mathcal{F}_t} \left[  \int_t^T  \zeta_s  \left| \dfrac{H_s}{(T-s)} - \dfrac{\mathcal H^n_s}{(T-s+(\eta^\star/n)^{q-1})}\right|^{(q-1)\wedge 1} ds \right] .
\end{align}
With H\"older's inequality ($1/\varrho + 1 /\varrho_* = 1$):
\begin{align*}
 \mathbb E^{\mathcal{F}_t} \left[\int_t^T |\ln(T-s)| \varpi_s ds \right] & \leq \left[\int_t^T |\ln(T-s)|^{\varrho_*} ds  \right]^{1/\varrho_*} \left[ \mathbb E^{\mathcal{F}_t} \int_t^T  (\varpi_s)^\varrho  ds \right]^{1/\varrho}  \leq C  \left[ \mathbb E^{\mathcal{F}_t} \int_t^T  (\varpi_s)^\varrho  ds \right]^{1/\varrho}
\end{align*}
and
\begin{align*}
&  \mathbb E^{\mathcal{F}_t} \left[  \int_t^T  \left| \dfrac{H_s}{(T-s)} - \dfrac{\mathcal H^n_s}{(T-s+(\eta^\star/n)^{q-1})}\right|^{(q-1)\wedge 1} |D_\theta \eta_s|  ds \right] \leq \nu^n_t  \left[  \mathbb E^{\mathcal{F}_t}  \int_t^T  |D_\theta \eta_s|^\varrho  ds \right]^{1/\varrho} \\
&  \mathbb E^{\mathcal{F}_t} \left[  \int_t^T  \zeta_s  \left| \dfrac{H_s}{(T-s)} - \dfrac{\mathcal H^n_s}{(T-s+(\eta^\star/n)^{q-1})}\right|^{(q-1)\wedge 1} ds \right] \leq \nu^n_t  \left[  \mathbb E^{\mathcal{F}_t} \int_t^T  |\zeta_s |^\varrho ds \right]^{1/\varrho} 
 \end{align*}
 with 
 $$\nu^n_t = \left[ \mathbb E^{\mathcal{F}_t} \int_t^T  \left| \dfrac{H_s}{(T-s)} - \dfrac{\mathcal H^n_s}{(T-s+(\eta^\star/n)^{q-1})}\right|^{\varrho^* ((q-1)\wedge 1)}  ds\right]^{1/\varrho^*}  .$$
Note that 
 \begin{align*}
 \mathbb E^{\mathcal{F}_t} \left[  \int_t^T |\zeta_s |^\varrho  ds \right] & =\mathbb E^{\mathcal{F}_t} \left[  \int_t^T \mathbb E^{\mathcal{F}_s} \left[\int_s^T  \varpi_u du \right]^\varrho  ds  \right]  \\
 & \leq \mathbb E^{\mathcal{F}_t} \left[  \int_t^T (T-s)^{\varrho-1} \mathbb E^{\mathcal{F}_s} \left[\int_s^T ( \varpi_u)^\varrho  du \right]  ds \right]  \leq \dfrac{1}{\varrho} (T-t)^{\varrho} \mathbb E^{\mathcal{F}_t} \left[  \int_t^T ( \varpi_u)^\varrho  du \right] .
 \end{align*} 
From Lemma \ref{lem:convergence_H_n} and its proof, we can use the dominated convergence theorem to deduce that for a fixed $t$, $\nu^n$ converges to zero a.s. Thus coming back to \eqref{eq:diff_DH_1}, we obtain the a.s. convergence of $D_\theta \mathcal H^n_t$ to $D_\theta H_t$. 
 
 \medskip 
To obtain the convergence in mean, we raise to the power $1 < \ell < \varrho$ and use the expectation in \eqref{eq:diff_DH_1}:
\begin{align*} 
  & \mathbb E \left| D_\theta H_t - D_\theta \mathcal H^n_t  \right|^\ell \leq C \left(   \dfrac{\eta^\star}{n}\right)^{\ell (q-1)} \mathbb E \left[  |D_\theta\eta_T|^\ell +  \int_t^T  \left( |D_\theta b^\eta_s|^\ell  +  |D_\theta \gamma_s|^\ell  + | D_\theta \eta_s|^\ell  \right) ds \right] \\
  & + C \left( \frac{\eta^\star}{n} \right)^{\ell (q-1)}  \left[ \mathbb E \int_t^T  (\varpi_s)^\varrho  ds  \right]^{\ell/\varrho} + C \mathbb E \left[  (\nu^n_t)^\ell  \left[  \mathbb E^{\mathcal{F}_t}  \int_t^T  |D_\theta \eta_s|^\varrho  ds  \right]^{\ell/\varrho} \right]  + C \mathbb E \left[ (\nu^n_t)^\ell  \mathbb E^{\mathcal{F}_t} \left[  \int_t^T ( \varpi_u)^\varrho  du \right]^{\ell/\rho}\right] \\
    &  \leq C \left(   \dfrac{\eta^\star}{n}\right)^{\ell (q-1)} \mathbb E \left[  |D_\theta\eta_T|^\varrho +  \int_t^T  \left( |D_\theta b^\eta_s|^\varrho  +  |D_\theta \gamma_s|^\varrho + | D_\theta \eta_s|^\varrho \right) ds \right] \\
    & + C  \left[ \mathbb E (\nu^n_t)^{\ell \varrho / (\varrho-\ell)} \right]^{(\varrho-\ell)/\varrho} \left\{  \left[  \mathbb E  \int_t^T  |D_\theta \eta_s|^\varrho  ds \right]^{\ell/\varrho}   + \left[  \mathbb E \int_t^T ( \varpi_u)^\varrho  du\right]^{\ell/\rho}\right\} \\
    & \leq C\varepsilon_n  \mathbb E \left[  |D_\theta\eta_T|^\varrho +  \int_t^T  \left( |D_\theta b^\eta_s|^\varrho  +  |D_\theta \gamma_s|^\varrho + | D_\theta \eta_s|^\varrho \right) ds \right]
\end{align*}
 with 
 $$\varepsilon_n =  \left(   \dfrac{\eta^\star}{n}\right)^{\ell (q-1)} +  \left[ \mathbb E (\nu^n_0)^{\ell \varrho / (\varrho-\ell)} \right]^{(\varrho-\ell)/\varrho}.$$
 Therefore with Lemma \ref{lem:convergence_H_n}
 $$\lim_{n\to +\infty} \mathbb E \int_0^T \int_0^T  \left| D_\theta H_t - D_\theta \mathcal H^n_t  \right|^\ell d\theta dt   = 0,$$
 which achieves the proof of the lemma. 
\end{proof}

Remark that if the condition is: 
 $$\sup_{\theta \in[0,T]} \mathbb E  \left[ |D_\theta \eta_T|^\varrho +  \int_0^T \left( |D_\theta b^\eta_s|^\varrho + |D_\theta \gamma_s|^\varrho + |D_\theta \eta_s|^\varrho \right) ds\right] < +\infty,$$
 then 
  $$\lim_{n\to +\infty} \sup_{\theta \in [0,T]}  \mathbb E  \int_0^T  \left| D_\theta H_t - D_\theta \mathcal H^n_t  \right|^\ell dt = 0.$$

Now we prove a stronger convergence result. 
\begin{lemme} \label{lem:unif_convergence_DHn}
    Assume that for some $\varrho > 1$,
    $$\sup_{\theta \in [0,T]} \mathbb E \left[ |D_\theta \eta_T|^{\varrho} +  \int_0^T \left( |D_\theta b^\eta_s|^{\varrho} + |D_\theta \gamma_s|^{\varrho} + |D_\theta \eta_s|^{\varrho} \right) ds\right] < +\infty. $$
        Then for any $1 < \ell < \varrho$
    $$\lim_{n\to +\infty} \sup_{\theta \in [0,T]} \mathbb E\left( \sup_{t\in [0,T]}  |D_\theta H_t- D_\theta \mathcal H^n_t |^\ell  \right) = 0.$$
\end{lemme}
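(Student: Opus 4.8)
The plan is to read \eqref{eq:diff_mallia_BSDE} as a \emph{linear} BSDE for the difference $\Delta^n_t = D_\theta H_t - D_\theta \mathcal{H}^n_t$ and to invoke the classical $S^\ell$ a priori estimate for BSDEs, which delivers the supremum over $t$ automatically. Gathering the terms of \eqref{eq:diff_mallia_BSDE}, I would write
$$\Delta^n_t = \xi^n + \int_t^T \bigl(\Phi^n_s - a^n_s \Delta^n_s\bigr)\, ds - \int_t^T \bigl(D_\theta Z^H_s - D_\theta \mathcal{Z}^n_s\bigr)\, dW_s,$$
with terminal value $\xi^n = (\eta^\star/n)^{q-1} D_\theta \eta_T$, linear coefficient $a^n_s = \frac{\partial G}{\partial h}\!\left(s - (\eta^\star/n)^{q-1}, \mathcal{H}^n_s, \eta_s\right)$, and source $\Phi^n_s$ collecting the remaining (non-$\Delta^n$) terms. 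By Lemma \ref{lem:convergence:DHn:control_1}, $a^n_s \geq -\kappa_1$ uniformly in $n \geq n_0$ and $s$, so the generator $(s,y)\mapsto \Phi^n_s - a^n_s y$ is monotone in $y$ with constant $\kappa_1$ and carries no $z$-dependence.

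First I would apply the standard a priori estimate (\cite[Section 5.3.1]{pard:rasc:14}): since the monotonicity constant is at most $\kappa_1$ uniformly in $n$, there is a constant $C$ depending only on $\kappa_1$, $T$, $\ell$ (in particular \emph{independent of} $\theta$) such that
$$\mathbb{E}\left[\sup_{t \in [0,T]} |\Delta^n_t|^\ell\right] \leq C\, \mathbb{E}\left[|\xi^n|^\ell + \left(\int_0^T |\Phi^n_s|\, ds\right)^\ell\right].$$
This reduces the statement to bounding the right-hand side uniformly in $\theta$ and showing it tends to $0$; the term $\mathbb{E}[|\xi^n|^\ell]$ already carries the vanishing factor $(\eta^\star/n)^{\ell(q-1)}$ and is finite under the hypothesis.

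Next I would estimate $\mathbb{E}\bigl[(\int_0^T |\Phi^n_s|\, ds)^\ell\bigr]$, reusing essentially all the work done for Lemma \ref{lem:convergence:DHn}. The computation mirrors the derivation of \eqref{eq:diff_DH_1}, only \emph{without} the conditional expectations $\mathbb{E}^{\mathcal{F}_t}$: Lemma \ref{lem:convergence:DHn:control_2} controls the two generator differences through \eqref{eq:estim_diff_deriv_eta_0} and \eqref{eq:estim_diff_deriv_h_0}, the bound $|D_\theta H_s| \leq C(T-s)\zeta_s$ from \eqref{eq:estim_Mallia_deriv_H} absorbs the singular factor $1/(T-s)$ appearing in \eqref{eq:estim_diff_deriv_h_0}, and the logarithmic singularity coming from $\int_0^T \zeta_s/(T-s+(\eta^\star/n)^{q-1})\, ds$ stays integrable. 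Combining these with Hölder's inequality (with exponents $\varrho,\varrho^*$, then $\varrho/\ell$ and its conjugate) exactly as in the proof of Lemma \ref{lem:convergence:DHn}, I would arrive at
$$\mathbb{E}\left[\left(\int_0^T |\Phi^n_s|\, ds\right)^\ell\right] \leq C \varepsilon_n \sup_{\theta \in [0,T]} \mathbb{E}\left[|D_\theta \eta_T|^\varrho + \int_0^T \bigl(|D_\theta b^\eta_s|^\varrho + |D_\theta \gamma_s|^\varrho + |D_\theta \eta_s|^\varrho\bigr) ds\right],$$
where $\varepsilon_n = (\eta^\star/n)^{\ell(q-1)} + \bigl[\mathbb{E}(\nu^n)^{\ell\varrho/(\varrho-\ell)}\bigr]^{(\varrho-\ell)/\varrho}$, $\nu^n$ being the pathwise analogue of $\nu^n_0$, and $\varepsilon_n \to 0$ by Lemma \ref{lem:convergence_H_n} and dominated convergence. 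As the bound is $\theta$-independent, taking the supremum over $\theta$ and letting $n\to\infty$ yields the claim.

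The hard part will be the control of the source $\Phi^n$ near the terminal time $T$: the difference of the $h$-derivatives of $G$ in \eqref{eq:estim_diff_deriv_h_0} blows up like $1/(T-s)$, and only the sharp bound $|D_\theta H_s|\leq C(T-s)\zeta_s$ together with the $\varrho$-integrability assumption (used through Hölder to tame the $|\ln(T-s)|$ singularity) make $\int_0^T |\Phi^n_s|\, ds$ belong to $L^\ell(\Omega)$ with a bound converging to $0$. Verifying that this convergence is moreover \emph{uniform} in $\theta$ then reduces to checking that every constant produced along the way is $\theta$-free, which the uniform hypothesis on the Malliavin derivatives of the data guarantees.
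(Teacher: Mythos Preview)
Your approach is correct and genuinely different from the paper's. The paper does not invoke an off-the-shelf $S^\ell$ a priori estimate; instead it manually applies It\^o's formula to $e^{\mu t}|\Delta^n_t|^\ell$, uses Young's inequality term by term, fixes $\mu = 1 + 3(\ell-1) + \ell\kappa_1$ to absorb the linear part, and first obtains convergence of $\sup_t \mathbb E[|\Delta^n_t|^\ell]$ together with the weighted $Z$-term before upgrading to $\mathbb E[\sup_t |\Delta^n_t|^\ell]$ via the Burkholder--Davis--Gundy inequality. Your route is cleaner: once you observe that the generator $(s,y)\mapsto \Phi^n_s - a^n_s y$ is monotone with the uniform constant $\kappa_1$ from Lemma~\ref{lem:convergence:DHn:control_1} (the upper bound on $a^n_s$, which blows up in $n$, is irrelevant for monotonicity), the standard $S^\ell$ estimate delivers the supremum in one shot, and the source-term control reduces exactly to the $t=0$ computation already carried out at the end of Lemma~\ref{lem:convergence:DHn}. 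Both proofs rest on the same three inputs: the one-sided bound of Lemma~\ref{lem:convergence:DHn:control_1}, the increment estimates of Lemma~\ref{lem:convergence:DHn:control_2}, and the sharp decay $|D_\theta H_s|\le C(T-s)\zeta_s$ from~\eqref{eq:estim_Mallia_deriv_H} that cancels the $1/(T-s)$ singularity.

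Two small points you should make explicit. First, the a priori estimate you cite presupposes $(\Delta^n,\Delta Z^n)\in S^\ell\times H^\ell$; this holds because $D_\theta H\in S^\varrho$ by~\eqref{eq:estim_Mallia_deriv_H} and Doob, while $D_\theta\mathcal H^n$ solves, for each fixed $n$, a linear BSDE with \emph{bounded} coefficient (the bound depends on $n$, but that does not matter for membership). Second, for the term $(\eta^\star/n)^{q-1}\int_0^T \zeta_s/(T-s+(\eta^\star/n)^{q-1})\,ds$ you cannot simply copy the paper's integration-by-parts from Lemma~\ref{lem:convergence:DHn}, since that step used the outer $\mathbb E^{\mathcal F_t}$ to remove the inner conditional expectation in $\zeta_s$; instead bound $\zeta_s\le \sup_u \mathbb E^{\mathcal F_u}[\int_0^T\varpi_r\,dr]$, so the integral is at most a logarithm in $n$ times a Doob maximal function in $L^\varrho$, and the prefactor $(\eta^\star/n)^{q-1}$ still kills it. What your approach buys is brevity and modularity; what the paper's buys is self-containedness and, as a by-product of the It\^o computation, the explicit convergence of the weighted $Z$-increments it uses afterwards.
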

\begin{proof}
    We apply It\^o's formula to $\Delta^n_t = D_\theta H_t - D_\theta \mathcal H^n_t$
with the function $x\mapsto |x|^\ell$ for $\ell> 1$, $0\leq \theta \leq t \leq T$ and $n \geq n_0$. Using the BSDE representation \eqref{eq:diff_mallia_BSDE}, we obtain 
    \begin{align*} \nonumber
    & e^{\mu t}|\Delta^n_t|^\ell +\dfrac{\ell(1\wedge (\ell-1))}{2} \int_t^T e^{\mu s}|\Delta^n_s|^{\ell-2} \mathbf 1_{\Delta^n_s\neq 0}(D_\theta Z^H_s - D_\theta \mathcal Z^n_s)^2 ds \\
    & \leq  \left(   \dfrac{\eta^\star}{n}\right)^{\ell(q-1)}  e^{\mu T}|D_\theta\eta_T|^\ell - \int_t^T \mu e^{\mu s}|\Delta^n_s|^\ell ds \\
    & -\ell  \left(   \dfrac{\eta^\star}{n}\right)^{q-1} \int_t^T e^{\mu s}|\Delta^n_s|^{\ell-2} \mathbf 1_{\Delta^n_s\neq 0} \left[ D_\theta b^\eta_s + p \left( \int_0^1 \left(T-s+ a\left(   \dfrac{\eta^\star}{n}\right)^{q-1} \right) da \right) D_\theta \gamma_s \right]  \Delta^n_s ds \\  \nonumber
    & + \ell \int_t^T e^{\mu s}|\Delta^n_s|^{\ell-2} \mathbf 1_{\Delta^n_s\neq 0}  \left[ \dfrac{\partial G}{\partial \eta}(s,H_s,\eta_s) - \dfrac{\partial G}{\partial \eta}\left(s-\left( \frac{\eta^\star}{n} \right)^{q-1},\mathcal H^n_s,\eta_s\right) \right] D_\theta \eta_s \Delta^n_s  ds \\
    & - \ell \int_t^T e^{\mu s}|\Delta^n_s|^{\ell-2} \mathbf 1_{\Delta^n_s\neq 0} \Delta^n_s(D_\theta Z^H_s - D_\theta \mathcal Z^n_s) dW_s\\  \nonumber 
    & - \ell \int_t^T e^{\mu s}|\Delta^n_s|^{\ell-2} \mathbf 1_{\Delta^n_s\neq 0}\ \left[ \dfrac{\partial G}{\partial h}(s,H_s,\eta_s)  - \dfrac{\partial G}{\partial h}\left(s-\left( \frac{\eta^\star}{n} \right)^{q-1},\mathcal H^n_s,\eta_s\right) \right]  D_\theta H_s \Delta^n_s ds \\
    & - \ell \int_t^T e^{\mu s}|\Delta^n_s|^{\ell-2} \mathbf 1_{\Delta^n_s\neq 0}  \dfrac{\partial G}{\partial h}\left(s-\left( \frac{\eta^\star}{n} \right)^{q-1},\mathcal H^n_s,\eta_s\right) \left( \Delta^n_s \right)^2 ds.
\end{align*}
With Young's inequality we have 
\begin{align*}
& \left(   \dfrac{\eta^\star}{n}\right)^{q-1} |\Delta^n_s|^{\ell-2} \mathbf 1_{\Delta^n_s\neq 0}\left| D_\theta b^\eta_s + p \left( \int_0^1 \left(T-s+ a\left(   \dfrac{\eta^\star}{n}\right)^{q-1} \right) da \right) D_\theta \gamma_s \right| |  \Delta^n_s | \\
& \qquad \leq \dfrac{1}{\ell} \left(   \dfrac{\eta^\star}{n}\right)^{\ell(q-1)}\left| D_\theta b^\eta_s + p  \left(T+ \left(   \dfrac{\eta^\star}{n}\right)^{q-1} \right)  D_\theta \gamma_s \right|^\ell + \dfrac{\ell-1}{\ell} |  \Delta^n_s |^{\ell} ,
\end{align*}
and
\begin{align*}
& |\Delta^n_s|^{\ell-2} \mathbf 1_{\Delta^n_s\neq 0}\left|   \dfrac{\partial G}{\partial h}(s,H_s,\eta_s)  - \dfrac{\partial G}{\partial h}\left(s-\left( \frac{\eta^\star}{n} \right)^{q-1},\mathcal H^n_s,\eta_s\right) \right| |D_\theta H_s|| \Delta^n_s | \\
& \qquad \leq  \dfrac{1}{\ell} \left| \dfrac{\partial G}{\partial h}(s,H_s,\eta_s)  - \dfrac{\partial G}{\partial h}\left(s-\left( \frac{\eta^\star}{n} \right)^{q-1},\mathcal H^n_s,\eta_s\right)  \right|^\ell |D_\theta H_s|^\ell + \dfrac{\ell-1}{\ell} |  \Delta^n_s |^\ell ,
\end{align*}
and finally
\begin{align*}
 & |\Delta^n_s|^{\ell-2} \mathbf 1_{\Delta^n_s\neq 0} \left| \dfrac{\partial G}{\partial \eta}(s,H_s,\eta_s) - \dfrac{\partial G}{\partial \eta}\left(s-\left( \frac{\eta^\star}{n} \right)^{q-1},\mathcal H^n_s,\eta_s\right) \right| | D_\theta \eta_s|| \Delta^n_s | \\
 & \quad \leq \dfrac{1}{\ell} \left|\dfrac{\partial G}{\partial \eta}(s,H_s,\eta_s) - \dfrac{\partial G}{\partial \eta}\left(s-\left( \frac{\eta^\star}{n} \right)^{q-1},\mathcal H^n_s,\eta_s\right) \right|^\ell | D_\theta \eta_s|^\ell + \dfrac{\ell-1}{\ell}| \Delta^n_s |^\ell.
\end{align*}
Using Lemma \ref{lem:convergence:DHn:control_1}, we choose 
$$\mu=1 + 3(\ell-1)+  \ell \kappa_1$$
such that 
    \begin{align} \nonumber
    & e^{\mu t}|\Delta^n_t|^\ell+ \int_t^T e^{\mu s}|\Delta^n_s|^\ell ds +\dfrac{\ell(1\wedge (\ell-1))}{2} \int_t^T e^{\mu s}|\Delta^n_s|^{\ell-2} \mathbf 1_{\Delta^n_s\neq 0}(D_\theta Z^H_s - D_\theta \mathcal Z^n_s)^2 ds \\ \nonumber
    & \quad \leq  \left(   \dfrac{\eta^\star}{n}\right)^{\ell(q-1)}  e^{\mu T}|D_\theta\eta_T|^\ell -  \ell \int_t^T e^{\mu s}|\Delta^n_s|^{\ell-2} \mathbf 1_{\Delta^n_s\neq 0} \Delta^n_s(D_\theta Z^H_s - D_\theta \mathcal Z^n_s) dW_s \\ \nonumber
    & \qquad  + \left(   \dfrac{\eta^\star}{n}\right)^{\ell(q-1)}\int_t^T e^{\mu s} \left| D_\theta b^\eta_s + p  \left(T+ \left(   \dfrac{\eta^\star}{n}\right)^{q-1} \right)  D_\theta \gamma_s \right|^\ell    ds \\  \nonumber
    & \qquad  +  \int_t^T e^{\mu s}\left|\dfrac{\partial G}{\partial \eta}(s,H_s,\eta_s) - \dfrac{\partial G}{\partial \eta}\left(s-\left( \frac{\eta^\star}{n} \right)^{q-1},\mathcal H^n_s,\eta_s\right) \right|^\ell | D_\theta \eta_s|^\ell   ds \\ \label{eq:estim_diff_BSDE_unif}
    &  \qquad +  \int_t^T e^{\mu s}  \left| \dfrac{\partial G}{\partial h}(s,H_s,\eta_s)  - \dfrac{\partial G}{\partial h}\left(s-\left( \frac{\eta^\star}{n} \right)^{q-1},\mathcal H^n_s,\eta_s\right)  \right|^\ell |D_\theta H_s|^\ell  ds.
\end{align}
From Lemma \ref{lem:convergence:DHn:control_2}, with Young's inequality with $\kappa > 1 $ 
\begin{align*}
&  \left|\dfrac{\partial G}{\partial \eta}(s,H_s,\eta_s) - \dfrac{\partial G}{\partial \eta}\left(s-\left( \frac{\eta^\star}{n} \right)^{q-1},\mathcal H^n_s,\eta_s\right) \right|^\ell | D_\theta \eta_s|^\ell \\
& \quad    \leq  \dfrac{\kappa-1}{\kappa}  \left| \dfrac{\partial G}{\partial \eta}(s,H_s,\eta_s) - \dfrac{\partial G}{\partial \eta}\left(s-\left( \frac{\eta^\star}{n} \right)^{q-1},\mathcal H^n_s,\eta_s\right) \right|^{\ell \frac{\kappa}{\kappa-1}} + \dfrac{1}{\kappa} | D_\theta \eta_s|^{\ell \kappa} \\
& \quad    \leq  \dfrac{\kappa-1}{\kappa}  \left| \Upsilon^n_s \left( \dfrac{H_s}{(T-s)} - \dfrac{\mathcal H^n_s}{(T-s+(\eta^\star/n)^{q-1})}\right) 
 \right|^{\ell \frac{\kappa}{\kappa-1}} + \dfrac{1}{\kappa} | D_\theta \eta_s|^{\ell \kappa} \\
 & \quad    \leq C \dfrac{\kappa-1}{\kappa}  \left|  \dfrac{H_s}{(T-s)} - \dfrac{\mathcal H^n_s}{(T-s+(\eta^\star/n)^{q-1})}
 \right|^{\ell \frac{\kappa}{\kappa-1}} + \dfrac{1}{\kappa} | D_\theta \eta_s|^{\ell \kappa}.
\end{align*}
If $\ell < \varrho$, then there exists $\kappa > 1$ such that $\ell \kappa \leq \varrho$. 
With Lemma \ref{lem:convergence_H_n}, our assumption on $D_\theta \eta$, and the dominated convergence theorem, we deduce that if $\ell < \varrho$
$$\lim_{n\to +\infty} \mathbb E\int_{0}^T e^{\mu s}\left|\dfrac{\partial G}{\partial \eta}(s,H_s,\eta_s) - \dfrac{\partial G}{\partial \eta}\left(s-\left( \frac{\eta^\star}{n} \right)^{q-1},\mathcal H^n_s,\eta_s\right) \right|^\ell | D_\theta \eta_s|^\ell ds  = 0.$$

Again with Lemma \ref{lem:convergence:DHn:control_2},
\begin{align*}
& \mathbb E \int_{T-\delta}^T e^{\mu s}  \left| \dfrac{\partial G}{\partial h}(s,H_s,\eta_s)  - \dfrac{\partial G}{\partial h}\left(s-\left( \frac{\eta^\star}{n} \right)^{q-1},\mathcal H^n_s,\eta_s\right)  \right|^\ell |D_\theta H_s|^\ell  ds \\
& \quad \leq 2^{\ell-1} C \mathbb E \int_{T-\delta}^T |\zeta_s|^\ell \left| \dfrac{H_s}{(T-s)} - \dfrac{\mathcal H^n_s}{(T-s+(\eta^\star/n)^{q-1})}\right|^\ell  ds \\
& \qquad  + 2^{\ell-1} C  \left( \frac{\eta^\star}{n} \right)^{\ell(q-1)} \mathbb E \int_{T-\delta}^T  \dfrac{|\zeta_s|^\ell}{(T-s+(\eta^\star/n)^{(q-1)})^\ell}   ds.
\end{align*}
where 
$$\zeta_s=  \mathbb E^{\mathcal{F}_s} \left[\int_s^T  \left( | D_\theta b^\eta_u|  +  |D_\theta \gamma_u| + | D_\theta \eta_u | \right)  du \right] = \mathbb E^{\mathcal{F}_s} \left[\int_s^T  \varpi_u du \right].$$
Thus 
$$|\zeta_s|^\ell \leq  \mathbb E^{\mathcal{F}_s} \left[\int_s^T  |\varpi_u|^\ell du \right].$$
Next we can argue as in the proof of the previous lemma to deduce that 
$$\lim_{n\to +\infty}  \mathbb E \int_{0}^T e^{\mu s}  \left| \dfrac{\partial G}{\partial h}(s,H_s,\eta_s)  - \dfrac{\partial G}{\partial h}\left(s-\left( \frac{\eta^\star}{n} \right)^{q-1},\mathcal H^n_s,\eta_s\right)  \right|^\ell |D_\theta H_s|^\ell  ds = 0$$
Taking the expectation in \eqref{eq:estim_diff_BSDE_unif}, we deduce that 
$$\lim_{n\to +\infty}\sup_{\theta \in [0,T]} \left( \sup_{t\in [0,T]} \mathbb E e^{\mu t}|\Delta^n_t|^\ell + \mathbb E \int_{0}^T e^{\mu s}|\Delta^n_s|^{\ell-2} \mathbf 1_{\Delta^n_s\neq 0}(D_\theta Z^H_s - D_\theta \mathcal Z^n_s)^2 ds  + \mathbb E \int_{0}^T e^{\mu s} |\Delta^n_s|^\ell ds \right) = 0.$$
The Burkholder-Davis-Gundy inequality leads to:
\begin{align*}
   &  \mathbb E \sup_{t\in [0,T]} \left| \int_t^T e^{\mu s}|\Delta^n_s|^{\ell-2} \mathbf 1_{\Delta^n_s\neq 0} \Delta^n_s(D_\theta Z^H_s - D_\theta \mathcal Z^n_s)\Delta^n_s dW_s \right| \\
   & \leq C \mathbb E \left[ \int_{0}^T e^{2 \mu s}|\Delta^n_s|^{2\ell-2} \mathbf 1_{\Delta^n_s\neq 0} (D_\theta Z^H_s - D_\theta \mathcal Z^n_s)^2  ds\right]^{\frac{1}{2}} \\
   & \leq C \mathbb E \left[ \sup_{t \in [0,T]} e^{\mu t} |\Delta^n_t|^\ell \int_{0}^T e^{ \mu s}|\Delta^n_s|^{\ell-2} \mathbf 1_{\Delta^n_s\neq 0} (D_\theta Z^H_s - D_\theta \mathcal Z^n_s)^2  ds\right]^{\frac{1}{2}} \\
   & \leq \dfrac{1}{2} \mathbb E \left[ \sup_{t \in [0,T]} e^{\mu t} |\Delta^n_t|^\ell \right] + \frac{C^2}{2} \mathbb E \left[\int_{0}^T e^{ \mu s}|\Delta^n_s|^{\ell-2} \mathbf 1_{\Delta^n_s\neq 0} (D_\theta Z^H_s - D_\theta \mathcal Z^n_s)^2  ds \right] .
\end{align*} 
With the same arguments as above, we obtain that: 
$$\lim_{n\to +\infty} \sup_{\theta \in [0,T]} \mathbb E \left[ \sup_{t\in [0,T]} e^{\mu t}|\Delta^n_t|^\ell  \right] = 0.$$
To obtain the convergence of the sequence $\Delta Z^n_s = D_\theta Z^H - D_\theta \mathcal Z^n$ for $1 < \ell  < 2$:
\begin{align*}
& \mathbb E \left[ \int_{0}^T |\Delta Z^n_s|^2 ds \right]^{ \ell/2}  =  \mathbb E \left[ \int_0^T\mathbf 1_{\Delta^n_s \neq 0} |\Delta Z^n_s|^2 ds\right]^{\ell/2}  = \mathbb E \left[ \int_0^T \left( \Delta^n_{s} \right)^{2-\ell}\left( \Delta^n_{s} \right)^{\ell-2}\mathbf 1_{\Delta^n_s \neq 0}  |\Delta Z^n_s|^2ds \right]^{\ell/2} \\ \nonumber
& \quad \leq \mathbb E \left[ \left( \Delta^n_{*}\right)^{\ell(2-\ell)/2} \left(\int_0^T \left( \Delta^n_{s} \right)^{\ell-2}\mathbf 1_{\Delta^n_s \neq 0}  |\Delta Z^n_s|^2 ds \right)^{\ell/2}\right]\\ \nonumber
& \quad \leq \left\{\mathbb E \left[ \left( \Delta^n_{*} \right)^{\ell}\right]\right\}^{(2-\ell)/2}  \left\{\mathbb E \int_0^T \left( \Delta^n_{s}\right)^{\ell-2} \mathbf 1_{\Delta^n_s \neq 0} |\Delta Z^n_s|^2 ds \right\}^{\ell/2} \\
& \quad  \leq \frac{2-\ell}{2} \mathbb E \left[ \left( \Delta^n_{*} \right)^{\ell}\right] + \frac{\ell}{2} \mathbb E \int_0^T \left( \Delta^n_{s} \right)^{\ell-2} \mathbf 1_{\Delta^n_s \neq 0} |\Delta Z^n_s|^2 ds
\end{align*}
where we have used H\"older's and Young's inequality with $ \frac{2-\ell}{2} + \frac{\ell}{2}=1$ and  $\Delta^n_{*} = \sup_{s \in [0,T]} |\Delta^n_s|$. This achieves the proof of the lemma. 
\end{proof}

\subsection{Convergence of $D_\theta Y^n$ and complete proof of Theorem \ref{thm:main_result}}

Let us start with the convergence for fixed parameters $\theta$ and $t$. 
\begin{proposition}
If we set $D_\theta Y_T= 0$ for $0\leq \theta \leq T$, then for any $0\leq \theta \leq t \leq T$, we have the almost surely convergence:
$$\lim_{n\to +\infty} D_\theta Y_t^n = D_\theta Y_t.$$
\end{proposition}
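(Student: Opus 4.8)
The plan is to recognise this proposition as essentially a corollary of the already-established convergence of $D_\theta \mathcal H^n$, combined with the algebraic decompositions of $D_\theta Y^n_t$ and $D_\theta Y_t$. First I would dispose of the terminal time. At $t=T$ the truncated terminal value $Y^n_T=n$ is deterministic, so $D_\theta Y^n_T=0$ for every $n$ and every $\theta$; this matches the convention $D_\theta Y_T=0$, and there is nothing to prove.

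For $0\le \theta \le t < T$ I would start from the two representations already at hand. Proposition \ref{propo:DYn:Deta:DHn} gives
$$D_\theta Y^n_t= \dfrac{1}{\left(T-t+ \left(\frac{\eta^\star}{n}\right)^{q-1}\right)^{p-1}} D_\theta \eta_t + \dfrac{1}{\left(T-t+ \left(\frac{\eta^\star}{n}\right)^{q-1}\right)^{p}}D_\theta \mathcal H^n_t,$$
while the first part of Theorem \ref{thm:main_result}, namely \eqref{eq:deriv_Y_explicit}, gives the analogous decomposition of $D_\theta Y_t$ with $(\eta^\star/n)^{q-1}$ replaced by $0$ and $D_\theta \mathcal H^n_t$ replaced by $D_\theta H_t$. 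The strategy is then simply to pass to the limit term by term.

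For the deterministic weights, since $T-t>0$ and $(\eta^\star/n)^{q-1}\to 0$ as $n\to+\infty$, I would observe that $(T-t+(\eta^\star/n)^{q-1})^{-(p-1)}\to (T-t)^{-(p-1)}$ and $(T-t+(\eta^\star/n)^{q-1})^{-p}\to (T-t)^{-p}$. The factor $D_\theta \eta_t$ in the first term does not depend on $n$, so the first term converges to $(T-t)^{-(p-1)}D_\theta \eta_t$. For the second term, Lemma \ref{lem:convergence:DHn} provides the almost sure convergence $D_\theta \mathcal H^n_t \to D_\theta H_t$ for the fixed pair $(\theta,t)$; multiplying by the convergent weight yields convergence of the second term to $(T-t)^{-p}D_\theta H_t$. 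Summing the two limits and comparing with \eqref{eq:deriv_Y_explicit} gives $D_\theta Y^n_t \to D_\theta Y_t$ almost surely.

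I do not expect a genuine obstacle here: all the analytic difficulty has been absorbed into Lemma \ref{lem:convergence:DHn}, and what remains is the elementary convergence of scalar weights away from the singular time $t=T$, together with the observation that at the terminal time the singular prefactor exactly cancels the blow-up of $D_\theta \mathcal H^n_T$ (equivalently, $Y^n_T=n$ is constant, so its derivative vanishes). The only point requiring a little care is to keep $t<T$ when invoking the convergence of the weights, which is why the terminal time is handled first as a separate and trivial case.
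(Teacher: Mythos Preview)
Your proof is correct and follows essentially the same approach as the paper: both argue by passing to the limit in the decomposition of Proposition~\ref{propo:DYn:Deta:DHn} using Lemma~\ref{lem:convergence:DHn} for $t<T$, and by noting $Y^n_T=n$ is deterministic for $t=T$. The only cosmetic difference is that the paper treats the case $t<T$ first and additionally verifies the terminal case by expanding $D_\theta Y^n_T$ via the decomposition and the explicit terminal value $D_\theta\mathcal H^n_T$, which cancels against the $\eta$-term.
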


\begin{proof}
    For any $0\leq \theta \leq t < T$, according to the first part of Theorem \ref{thm:main_result}, Proposition \ref{propo:DYn:Deta:DHn} and Lemma \ref{lem:convergence:DHn},
    $$D_\theta Y^n_t= \dfrac{1}{\left(T-t+ \left(\frac{\eta^\star}{n}\right)^{q-1}\right)^{p-1}} D_\theta \eta_t + \dfrac{1}{\left(T-t+ \left(\frac{\eta^\star}{n}\right)^{q-1}\right)^{p}}D_\theta \mathcal H^n_t$$
converges a.s. to 
    $$ \dfrac{D_\theta \eta_t}{(T-t)^{p-1}} + \dfrac{1}{(T-t)^{p}} D_\theta H_t= D_\theta Y_t.$$
For $t = T$, since $Y^n_T = n$, $D_\theta Y^n_T = 0$ for any $n$. Another way to obtain this fact consists of using Proposition \ref{propo:DYn:Deta:DHn} and the terminal condition of the process $\mathcal{H}^n$:
$$D_\theta Y^n_T = \dfrac{n}{\eta^\star} D_\theta \eta_T +\left(  \dfrac{n}{\eta^\star} \right)^q D_\theta \mathcal{H}^n_T =  \dfrac{n}{\eta^\star} D_\theta \eta_T - \left(  \dfrac{n}{\eta^\star} \right)^q (\eta^\star)^q \dfrac{1}{n^{q-1}}\dfrac{D_\theta \eta_T}{\eta^\star} = 0. $$
Hence the sequence $D_\theta Y^n_T$ converges a.s. to $0=D_\theta Y_T$. 
\end{proof}

Note that 
\begin{align*}
D_\theta H_t- D_\theta \mathcal H^n_t & = (T-t)^{p} D_\theta Y_t -\left(T-t+ \left(\frac{\eta^\star}{n}\right)^{q-1}\right)^{p} D_\theta Y^n_t + \left(\frac{\eta^\star}{n}\right)^{q-1} D_\theta \eta_t \\
& = (T-t)^{p} (D_\theta Y_t -D_\theta Y^n_t ) - \left[  \left(T-t+ \left(\frac{\eta^\star}{n}\right)^{q-1}\right)^{p} -  (T-t)^{p} \right] D_\theta Y^n_t + \dfrac{\eta^\star}{n}D_\theta \eta_t. 
\end{align*}
This equality allows us to complete the statement and the proof of Theorem \ref{thm:main_result}. 
Indeed from the previous remark, 
\begin{align*}
 (T-t)^{p} (D_\theta Y_t -D_\theta Y^n_t ) & = D_\theta H_t- D_\theta \mathcal H^n_t +  \left[  \left(T-t+ \left(\frac{\eta^\star}{n}\right)^{q-1}\right)^{p} -  (T-t)^{p} \right] D_\theta Y^n_t - \dfrac{\eta^\star}{n}D_\theta \eta_t. 
\end{align*}
Furthermore with Proposition \ref{propo:DYn:Deta:DHn}
\begin{align*}
&  \left[  \left(T-t+ \left(\frac{\eta^\star}{n}\right)^{q-1}\right)^{p} -  (T-t)^{p} \right] D_\theta Y^n_t   = p \left(\frac{\eta^\star}{n}\right)^{q-1}D_\theta Y^n_t  \int_0^1   \left(T-t+ a \left(\frac{\eta^\star}{n}\right)^{q-1}\right)^{p-1} da \\
& \quad = p \left(\frac{\eta^\star}{n}\right)^{q-1}D_\theta \eta_t  A^n_t + p \left(\frac{\eta^\star}{n}\right)^{q-1} \dfrac{1}{\left(T-t+ \left(\frac{\eta^\star}{n}\right)^{q-1}\right)}D_\theta \mathcal H^n_t  A^n_t \\
& \quad =  p \left(\frac{\eta^\star}{n}\right)^{q-1}D_\theta \eta_t  A^n_t + p \left(\frac{\eta^\star}{n}\right)^{q-1} \dfrac{1}{\left(T-t+ \left(\frac{\eta^\star}{n}\right)^{q-1}\right)} (D_\theta \mathcal H^n_t - D_\theta H_t)  A^n_t \\
& \qquad +  p \left(\frac{\eta^\star}{n}\right)^{q-1} \dfrac{1}{\left(T-t+ \left(\frac{\eta^\star}{n}\right)^{q-1}\right)} D_\theta H_t  A^n_t 
\end{align*}
with 
$$0\leq A^n_t =\dfrac{1}{\left(T-t+ \left(\frac{\eta^\star}{n}\right)^{q-1}\right)^{p-1}}  \int_0^1   \left(T-t+ a \left(\frac{\eta^\star}{n}\right)^{q-1}\right)^{p-1} da \leq 1.$$
Note that for any $n$ and $t$
$$0\leq \left(\frac{\eta^\star}{n}\right)^{q-1} \dfrac{1}{\left(T-t+ \left(\frac{\eta^\star}{n}\right)^{q-1}\right)}\leq 1$$
and from \eqref{eq:estim_Mallia_deriv_H}  for  $t \in [0,T]$, 
$$  |D_\theta H_t| \leq C (T-t) \mathbb E^{\mathcal{F}_t} \left[\int_t^T  \left( | D_\theta b^\eta_s|  +  |D_\theta \gamma_s| + | D_\theta \eta_s | \right)  ds \right].
$$
Therefore 
\begin{align*}
 \left[  \left(T-t+ \left(\frac{\eta^\star}{n}\right)^{q-1}\right)^{p} -  (T-t)^{p} \right] |D_\theta Y^n_t| &  \leq  p \left(\frac{\eta^\star}{n}\right)^{q-1} |D_\theta \eta_t|  + p |D_\theta \mathcal H^n_t - D_\theta H_t| \\
&  +  p \left(\frac{\eta^\star}{n}\right)^{q-1} \mathbb E^{\mathcal{F}_t} \left[\int_t^T  \left( | D_\theta b^\eta_s|  +  |D_\theta \gamma_s| + | D_\theta \eta_s | \right) ds \right].
\end{align*}
In other words 
\begin{align*}
 (T-t)^{p} |D_\theta Y_t -D_\theta Y^n_t | & \leq (1+p) |D_\theta H_t- D_\theta \mathcal H^n_t | +  \left( \dfrac{\eta^\star}{n} +p \left(\frac{\eta^\star}{n}\right)^{q-1}  \right)  |D_\theta \eta_t| \\
 & \quad +  p \left(\frac{\eta^\star}{n}\right)^{q-1} \mathbb E^{\mathcal{F}_t} \left[\int_t^T  \left( | D_\theta b^\eta_s|  +  |D_\theta \gamma_s| + | D_\theta \eta_s | \right)  ds \right].
 \end{align*}
 The conclusion directly comes from Lemma \ref{lem:unif_convergence_DHn}.

%

\section{Applications and Examples} \label{sect:applications}

 \subsection{Gradient of the related PDE} \label{ssect:gradient}

Here we consider that $\eta$ and $\gamma$ are smooth functions, $\eta_t=\eta(t,X_t)$ and $\gamma_t = \gamma(t,X_t)$, of the solution $X=X^x$ of the SDE: for $x \in \mathbb R^d$
\begin{align} \label{eq:sde}
    X_t & = x + \int_0^t b(s,X_s) ds + \int_0^t \sigma(s,X_s) dW_s, \quad 0\leq t\leq T.
\end{align}
We suppose that $b$ and $\sigma$ are continuous on $[0,T] \times \R^d$ and of class $C^1$ with respect to $x$ with bounded first derivatives. According to \cite[Theorem 2.2.1]{nual:06}, we have :
\begin{lemme} \label{lem:derive_eds} 
The SDE admits a unique solution $X$ in $S^\infty(0,T)$ such that:
\begin{enumerate}
    \item For all $t\in [0,T]$, $X^i_t \in \mathbb{D}^{1,\infty}$ and for all $p\in [1,+\infty[$, \begin{equation} \label{eq:estim_deriv_X}
    \underset{0\leq \theta \leq t}{\sup} \mathbb{E}\left[\underset{\theta \leq s \leq T}{\sup} |D_\theta X_s^i|^p\right] < +\infty.
    \end{equation}
    \item The process $DX^i$ satisfies the linear SDE 
$$D_\theta X^i_t = \sigma_i (\theta, X_\theta) + \sum_{k=1}^d \int_\theta^t \frac{\partial b_i}{\partial x_k}(s,X_s) D_\theta X^k_s ds+ \sum_{j=1}^d \sum_{k=1}^d \int_\theta^t \frac{\partial \sigma_i^j}{\partial x_k}(s,X_s) D_\theta X^k_s dW^j_s.$$
\end{enumerate}
\end{lemme}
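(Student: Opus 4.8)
The plan is to follow the classical Picard-iteration route to Malliavin differentiability of stochastic differential equations. Existence and uniqueness of a solution $X$ in $S^\rho(0,T)$ for every $\rho \in (1,+\infty)$---and hence in $S^\infty(0,T)$---follow from the standard fixed-point argument: since $b$ and $\sigma$ are of class $C^1$ in $x$ with bounded first derivatives, they are globally Lipschitz in $x$ uniformly in $t$, so the Itô theory together with the Burkholder--Davis--Gundy (BDG) and Gronwall inequalities yields a unique solution with finite moments of all orders.

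For the Malliavin differentiability I would introduce the Picard scheme $X^{(0)}_t = x$ and
$$X^{(m+1)}_t = x + \int_0^t b(s,X^{(m)}_s)\, ds + \int_0^t \sigma(s,X^{(m)}_s)\, dW_s,$$
and prove by induction on $m$ that each $X^{(m)}_t$ belongs to $\mathbb{D}^{1,p}$ for all $p$. The base case is immediate because $X^{(0)}$ is deterministic. For the inductive step, the maps $b(s,\cdot)$ and $\sigma(s,\cdot)$ being $C^1$ with bounded derivatives, the chain rule for the Malliavin derivative keeps $X^{(m+1)}_t$ in $\mathbb{D}^{1,p}$; commuting $D_\theta$ with the Lebesgue and Itô integrals---the differentiation of the stochastic integral at its lower endpoint $\theta$ producing the free term $\sigma_i(\theta,X^{(m)}_\theta)$---gives, for $\theta \le t$,
$$D_\theta X^{(m+1),i}_t = \sigma_i(\theta,X^{(m)}_\theta) + \sum_{k=1}^d \int_\theta^t \frac{\partial b_i}{\partial x_k}(s,X^{(m)}_s)\, D_\theta X^{(m),k}_s\, ds + \sum_{j,k=1}^d \int_\theta^t \frac{\partial \sigma_i^j}{\partial x_k}(s,X^{(m)}_s)\, D_\theta X^{(m),k}_s\, dW^j_s,$$
while $D_\theta X^{(m+1)}_t = 0$ for $\theta > t$.

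The next step is a uniform (in $m$) a priori estimate. Exploiting the boundedness of the first derivatives of $b$ and $\sigma$, BDG and Gronwall applied to the linear dynamics above produce a constant $C_p$, independent of $m$, with
$$\sup_{0 \le \theta \le t}\mathbb{E}\Big[\sup_{\theta \le s \le T}|D_\theta X^{(m),i}_s|^p\Big] \le C_p.$$
Since $X^{(m)}_t \to X_t$ in $S^p$ while the derivatives $D X^{(m)}$ stay bounded in $L^p(\Omega \times [0,T])$, the closability of the operator $D$ lets me conclude that $X_t \in \mathbb{D}^{1,p}$ with $D X^{(m)} \rightharpoonup DX$; the same uniform bound then passes to the limit, yielding \eqref{eq:estim_deriv_X} and membership in $\mathbb{D}^{1,\infty}$.

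The main obstacle, and the place where care is needed, is the identification of the limit: one must show that $DX$ solves the linear SDE of the statement by passing to the limit in the terms $\frac{\partial b_i}{\partial x_k}(s,X^{(m)}_s)\, D_\theta X^{(m),k}_s$ and their $\sigma$-analogues. I would handle this by combining the continuity of the first derivatives of $b$ and $\sigma$, the convergence $X^{(m)} \to X$, the uniform $L^p$ bounds above, and a dominated-convergence / uniform-integrability argument; the uniqueness of the solution of the resulting linear SDE then pins down $DX$ as the stated process.
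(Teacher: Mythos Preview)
The paper does not give its own proof of this lemma: it simply cites \cite[Theorem 2.2.1]{nual:06} immediately before the statement. Your proposal is a correct sketch of precisely that classical Picard-iteration argument (uniform $\mathbb{D}^{1,p}$ bounds on the iterates, closability of $D$, identification of the limit via the linear SDE), so your approach and the paper's reference coincide.
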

Under this setting, 
$$\eta_t = \eta(t,X_t) = \eta(0,x) + \int_0^t (\mathcal L \eta) (s,X_s) ds + \int_0^t \partial_x \eta(s,X_s) \sigma(s,X_s) dW_s,$$
where $\mathcal L$ is the infinitesimal generator of the SDE \eqref{eq:sde}:
\begin{equation} \label{eq:infi_gene_X}
\mathcal{L} \phi = \langle b,\partial_x  \phi \rangle + \dfrac{1}{2} \text{tr}\parent{\sigma  \sigma^*\partial^2_x \phi}.
\end{equation}
Hence $b^\eta_s = (\mathcal L \eta) (s,X_s)$ and $\sigma^{\eta}_s = \partial_x\eta(s,X_s) \sigma(s,X_s)$. 
\begin{example} \label{example:diff_case}
If we apply It\^o's formula to $\eta=\varphi(X)$, with $d=1$ and $\varphi(x) = \dfrac{\eta^\star-\eta_\star}{\pi} \arctan(x) + \dfrac{\eta^\star+\eta_\star}{2}$, then $\eta$ satisfies all required conditions (Assumptions \ref{assump:eta_gamma} and \ref{assumption:Malliavin:b:eta:gamma}) of the previous sections.
\end{example}

\medskip 
In this section, the superscript $t,x$ indicates the dependence of the solution on the initial data $(t,x)$, and it will be omitted when the context is clear. 
In this Markovian setting, it is known that the coupled system of Equations \eqref{eq:sde}-\eqref{eq:liquid_BSDE} is related to the solution of the PDE:
\begin{equation} \label{eq:PDE}
    \dfrac{\partial u}{\partial t} + \mathcal L u - (p-1) \dfrac{|u|^{q-1}}{\eta(t,x)^{q-1}} u + \gamma(t,x) = 0, \quad u(T,\cdot) = +\infty
\end{equation}

If $Y^{t,x}$ solves the BSDE \eqref{eq:liquid_BSDE} when $\eta_s$ and $\gamma_s$ are replaced by $\eta(s,X^{t,x}_s)$ and $\gamma(s,X^{t,x}_s)$, then for any $0 \leq t\leq s < T$, $Y^{t,x}_s = u(s,X^{t,x}_s)$ and $u$ is the viscosity solution of the previous PDE. 
See \cite{popi:06,grae:hors:sere:18,popi:17,caci:deni:popi:25}. Furthermore the expansion \eqref{eq:asymp_Y} of $Y$ corresponds to:
\begin{equation} \label{eq:expansion_u}
\forall (t,x) \in [0,T) \times \mathbb R^d, \qquad u(t,x) = \dfrac{\eta(t,x)}{(T-t)^{p-1}} + \dfrac{h(t,x)}{(T-t)^{p}},
\end{equation}
with $|h(t,x)| = O(T-t)^2$ at time $T$. This property is also given in \cite[Lemma 4.1]{grae:hors:sere:18}. Here $h$ is defined thanks to the relation: $h(s,X^{t,x}_s) = H^{t,x}_s$. 

\begin{proposition} \label{prop:regul_solution_PDE}
    We assume that: 
    \begin{itemize}
        \item The functions $(t,x) \mapsto \eta(t,x)$, $(t,x) \mapsto \mathcal L \eta(t,x)$ and $(t,x) \mapsto \gamma(t,x)$ are continuous w.r.t. $(t,x)$ and of class $C^1$ w.r.t. $x$, with bounded derivatives.
        \item The matrix $\sigma$ is uniformly elliptic, that is if there exists $\lambda > 0$ such that  $$\forall s \in [0,T],\ \forall (x,y)\in \mathbb R^d \times \R^d, \quad \langle \sigma(s,x) \sigma^*(s,x) y,y \rangle \geq \lambda |y|^2.$$
    \end{itemize}
    The solution $u$ is of class $C^1$ w.r.t. $x$ and for $0 \leq t < T$ and $x \in \mathbb R^d$
    $$\partial_x u(t,x) = \dfrac{\partial_x \eta(t,x)}{(T-t)^{p-1}} + \dfrac{\partial_x h(t,x)}{(T-t)^{p}}.$$
    Finally the approximating sequences $u^n$ and $\partial_x u^n$ (resp. $h^n$ and $\partial_x h^n$) pointwise converge to $u$ and $\partial_x u$ on $[0,T)\times \mathbb R^d$ (resp. to $h$ and $\partial_x h$ on $[0,T] \times \mathbb R^d$), where $u^n$ is the continuous viscosity solution of the PDE \eqref{eq:PDE} with terminal condition $u^n(T,\cdot) = n$ and $h^n$ is defined by 
    \begin{equation*} 
        u^n(t,x) = \dfrac{\eta(t,x)}{\left(T-t+ \left(\frac{\eta^\star}{n}\right)^{q-1}\right)^{p-1}}  + \dfrac{h^n(t,x)}{\left(T-t+ \left(\frac{\eta^\star}{n}\right)^{q-1}\right)^{p}}, \quad (t,x) \in [0,T] \times \mathbb{R}^d.
    \end{equation*}
\end{proposition}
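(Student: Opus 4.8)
The plan is to reduce everything to the $C^1$-regularity of $h$ and then differentiate the decomposition \eqref{eq:expansion_u}. Since $\eta$ is assumed $C^1$ in $x$ and
$$u(t,x)=\frac{\eta(t,x)}{(T-t)^{p-1}}+\frac{h(t,x)}{(T-t)^{p}},$$
the $C^1$-regularity of $u$ and the announced formula for $\partial_x u$ follow at once, provided we show that $h$ is $C^1$ in $x$ with gradient equal to the (locally uniform) limit of $\partial_x h^n$. Throughout I use the Markovian representations $u^n(t,x)=Y^{n,t,x}_t$, $h^n(t,x)=\mathcal H^{n,t,x}_t$, $u(t,x)=Y^{t,x}_t$ and $h(t,x)=H^{t,x}_t$, where the superscript $t,x$ indicates that the forward SDE \eqref{eq:sde} is started from $x$ at time $t$, so that $X^{t,x}_t=x$.

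First I would fix $n$ and combine the regularity of the approximation with a representation of its gradient. Because $Y^n$ is bounded (Lemma \ref{prop:truncated:bsde}) and $b,\sigma,\eta,\gamma$ are $C^1$ in $x$, the truncated generator is, on the relevant range, a smooth Lipschitz function of $y$, so classical differentiability results for Markovian BSDEs in the initial condition (together with Lemma \ref{lem:derive_eds} and Proposition \ref{prop:Malliavin_deriv}) give that $u^n$, hence $h^n$, is $C^1$ in $x$. The Malliavin chain rule applied to $\mathcal H^{n,t,x}_s=h^n(s,X^{t,x}_s)$ gives $D_\theta \mathcal H^{n,t,x}_s=\partial_x h^n(s,X^{t,x}_s)\,D_\theta X^{t,x}_s$ for $\theta\le s$; on the diagonal $s=\theta=t$, using $D_t X^{t,x}_t=\sigma(t,x)$ from Lemma \ref{lem:derive_eds}, this yields
$$\partial_x h^n(t,x)\,\sigma(t,x)=D_t \mathcal H^{n,t,x}_t.$$
By uniform ellipticity $\sigma\sigma^*$ is invertible with bounded inverse, so $\partial_x h^n(t,x)=D_t\mathcal H^{n,t,x}_t\,\sigma^*(t,x)(\sigma\sigma^*)^{-1}(t,x)$, and the analogous identity holds for $u^n$ via $D_tY^{n,t,x}_t$.

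Next I would pass to the limit. The pointwise convergence $h^n(t,x)\to h(t,x)$ is immediate from the a.s. convergence $\mathcal H^n_t\to H_t$ recalled before Lemma \ref{lem:convergence_H_n}, both quantities being deterministic. For the gradients I would feed the diagonal representation into Lemma \ref{lem:unif_convergence_DHn}: its uniform-in-$\theta$ hypothesis is satisfied here, since $\eta=\eta(\cdot,X)$, $b^\eta=\mathcal L\eta(\cdot,X)$, $\gamma=\gamma(\cdot,X)$ have Malliavin derivatives controlled by \eqref{eq:estim_deriv_X}. This delivers the convergence of $D_t\mathcal H^{n,t,x}_t$ to $D_tH^{t,x}_t$ and, after inverting $\sigma$, the convergence of $\partial_x h^n$ to the continuous field $g(t,x)=D_tH^{t,x}_t\,\sigma^*(t,x)(\sigma\sigma^*)^{-1}(t,x)$, locally uniformly in $(t,x)$. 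Together with the pointwise convergence of $h^n$, the classical theorem on $C^1$ convergence (pointwise convergence of the functions plus locally uniform convergence of the gradients) gives that $h$ is $C^1$ in $x$ with $\partial_x h=g$. Differentiating \eqref{eq:expansion_u} then yields the $C^1$-regularity of $u$ and the stated formula, and $\partial_x u^n\to\partial_x u$ on $[0,T)\times\mathbb R^d$ follows from the convergence of $\partial_x h^n$ and $\partial_x\eta$.

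The main obstacle is the step converting Lemma \ref{lem:unif_convergence_DHn}, whose conclusion is stated in the integrated form $\sup_\theta\mathbb E[\sup_t|D_\theta H_t-D_\theta\mathcal H^n_t|^\ell]$, into pointwise convergence of the diagonal values $D_t\mathcal H^{n,t,x}_t$ and, crucially, into the \emph{locally uniform} convergence of $\partial_x h^n$ required by the $C^1$-limit theorem. The diagonal $D_t\mathcal H^n_t$ is the martingale integrand $\mathcal Z^n_t$, which a priori is only defined for $dt$-a.e.\ $t$, so the identification must be carried out through the continuity of the Markovian representation $s\mapsto\mathcal Z^{n,t,x}_s=\partial_x h^n(s,X^{t,x}_s)\sigma(s,X^{t,x}_s)$ and uniform-in-$(t,x)$ moment bounds from \eqref{eq:estim_deriv_X}; making this rigorous, rather than the algebra of inverting $\sigma$, is where the care is needed.
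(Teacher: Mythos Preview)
Your overall architecture---reduce to $C^1$-regularity of $h$, then differentiate \eqref{eq:expansion_u}---matches the paper. The treatment of $h^n$ via the diagonal Malliavin derivative and ellipticity is also essentially what the paper does, though the paper phrases it through the variational process $\nabla\mathcal H^n$ and the representation $D_\theta\mathcal H^n_s=\nabla\mathcal H^n_s(\nabla X_\theta)^{-1}\sigma(\theta,X_\theta)$ from Ma--Zhang, citing their Theorem 3.1 directly for the $C^1$-regularity of $h^n$.

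The real divergence is in how the $C^1$-regularity of $h$ itself is obtained, and here the gap you flag in your last paragraph is genuine and unresolved. Your plan is to deduce $h\in C^1$ from the $C^1$-limit theorem, which needs \emph{locally uniform} convergence of $\partial_x h^n$; but Lemma \ref{lem:unif_convergence_DHn} is an $L^\ell$ statement for a fixed initial condition, and upgrading it to uniformity in $(t,x)$ (and to pointwise diagonal values) is not carried out. The paper sidesteps this entirely: it proves that $\partial_x h$ exists \emph{directly}, independently of the approximation, by forming the difference quotients $\nabla H^\varepsilon_s=\varepsilon^{-1}(H^{t,x+\varepsilon}_s-H^{t,x}_s)$ and showing they converge to the solution $\nabla H$ of the variational BSDE attached to \eqref{eq:BSDE_H}. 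The key observation that makes this work is that the a priori bound $|H^{t,x}_u|\le R(T-u)^2$ on $[T-\delta,T]$ holds with constants $R,\delta$ depending only on the bounds of $\eta,\gamma$ and \emph{not on $x$}; this keeps $\partial_h\widetilde G$ and $\partial_\eta\widetilde G$ uniformly bounded along the segment between $H^{t,x}$ and $H^{t,x+\varepsilon}$, so the Ma--Zhang argument goes through despite the singular generator. Once $\partial_x h=\nabla H$ is established this way, Lemma \ref{lem:unif_convergence_DHn} is used only for the (pointwise) convergence $\partial_x h^n\to\partial_x h$, which is all the proposition claims.

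In short: you correctly isolate the obstacle but try to push through it; the paper goes around it by proving differentiability of $h$ from first principles via difference quotients, exploiting the $x$-uniformity of the estimate $|H|\le R(T-\cdot)^2$.
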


\begin{proof}
The function $h^n$ is linked to $\mathcal H^n$ by the relation: $h^n(s,X^{t,x}_s) = \mathcal H^{n,t,x}_s$. We know that $u^n$ and $h^n$ pointwise converge to $u$ and $h$ (from the convergence of $Y^n$ and $\mathcal H^n$ to $Y$ and $H$), which gives the asymptotic expansion of $u$.

Under our setting, we use \cite[Lemma 2.4 and Theorem 3.1]{ma:zhang:2002} to deduce that $D_\theta Y^n_s$ and $D_\theta \mathcal H^n_s$ are of the form
\begin{align*}
D_\theta Y^n_s = \nabla Y^n_s (\nabla X_\theta)^{-1} \sigma(\theta,X_\theta) \mathbf 1_{\theta \leq s} ,\quad &  D_\theta \mathcal H^n_s = \nabla \mathcal H^n_s (\nabla X_\theta)^{-1} \sigma(\theta,X_\theta) \mathbf 1_{\theta \leq s}, 
\end{align*}
where the processes $\nabla X$, $\nabla Y^n$ and  $\nabla \mathcal H^n$ are solution of the variational equations related to \eqref{eq:sde},  \eqref{edsr:tronquee} and \eqref{eq:BSDE_H_n}. Furthermore $h^n$ is of class $C^1$ w.r.t. $x$  with $\partial_x h^n = \nabla \mathcal H^n$ and $\partial_x h^n$ is continuous on $[0,T] \times \mathbb R^d$. Let us emphasize that these results cannot be directly used for $Y$ (singularity in the terminal condition) or $H$ (singularity in the generator). 

However we can define the solution $(\nabla H,\nabla Z^H)$ of the variational equation related to \eqref{eq:BSDE_H}
    \begin{align*} \nonumber 
    \nabla H_s & = \int_s^T  \left[ (T-u) \partial_x b^\eta(u,X_u) \nabla X_u  + (T-u)^p \partial_x \gamma(u,X_u) \nabla X_u \right] du \\
    & -  \int_s^T  \dfrac{\partial G}{\partial \eta}(u,H_u,\eta_u) \partial_x \eta(u,X_u) \nabla X_u du    - \int_s^T  \dfrac{\partial G}{\partial h}(u,H_u,\eta_u)  \nabla H_u du - \int_s^T \nabla Z^H_u dW_u
    \end{align*}
 using again that a.s. $|H_u | \leq C(T-u)^2$ close to $T$. And from Lemma \ref{lem:mallia_deriv_H}, we also have 
$$D_\theta H_s = \nabla H_s(\nabla X_\theta)^{-1} \sigma(\theta,X_\theta) \mathbf 1_{\theta \leq s}.$$
Then we check the proof of \cite[Theorem 3.1]{ma:zhang:2002} to deduce that $\partial_x h$ exists. We define 
$$\nabla X^{\varepsilon}_s = \dfrac{1}{\varepsilon} \left( X^{t,x+\varepsilon}_s - X^{t,x}_s \right),\quad \nabla H^{\varepsilon}_s = \dfrac{1}{\varepsilon} \left( H^{t,x+\varepsilon}_s - H^{t,x}_s \right),\quad \nabla Z^{\varepsilon}_s = \dfrac{1}{\varepsilon} \left( Z^{H,t,x+\varepsilon}_s - Z^{H,t,x}_s \right)$$
and prove that $\nabla H^{\varepsilon}$ converges to $\nabla H$ when $\varepsilon$ goes to zero. First note that 
    \begin{align*} \nonumber 
    \nabla H^\varepsilon_s & = \int_s^T  \left[ (T-u) \partial_x \widetilde b^\eta(u)  + (T-u)^p \partial_x  \widetilde \gamma(u) - \partial_\eta \widetilde G(u) \right] \nabla X^\varepsilon_u  du - \int_s^T  \partial_h \widetilde G(u)  \nabla H^\varepsilon_u du - \int_s^T \nabla Z^\varepsilon_u dW_u
    \end{align*}
where 
\begin{align*}
\partial_x \widetilde b^\eta (u) & = \int_0^1 \partial_x b^\eta(u,X^{t,x}_u + a (X^{t,x+\varepsilon}_u-X^{t,x}_u)) da \\
\partial_x  \widetilde \gamma(u)  & =  \int_0^1 \partial_x \gamma (u,X^{t,x}_u + a (X^{t,x+\varepsilon}_u-X^{t,x}_u)) da \\
\partial_\eta \widetilde G(u) & =  \int_0^1 \partial_\eta G (u,\eta^{t,x}_u + a (\eta^{t,x+\varepsilon}_u-\eta^{t,x}_u),H^{t,x+\varepsilon}_u) da \times  \int_0^1 \partial_x \eta (u,X^{t,x}_u + \alpha (X^{t,x+\varepsilon}_u-X^{t,x}_u)) d\alpha \\
 \partial_h \widetilde G(u) & =  \int_0^1 \partial_h G (u,\eta^{t,x}_u ,H^{t,x}_u+ a (H^{t,x+\varepsilon}_u-H^{t,x}_u)) da .
\end{align*}
The key point now is that $|H^{t,x}_u| \leq C(T-u)^2$ on the interval $[T-\delta,T]$ and both constants $C$ and $\delta$ depend only on the bounds on $\eta$ and $\gamma$, and not on $x$. Therefore the processes $\partial_\eta \widetilde G $ and $\partial_h \widetilde G$ are bounded, uniformly w.r.t. $\varepsilon$. The rest of the proof can be copied from \cite{ma:zhang:2002}, to conclude that $\partial_x h$ exists and is equal to $\nabla H$ and that $\partial_x h$ is continuous on $[0,T] \times \mathbb R^d$.

From \eqref{eq:expansion_u}, we deduce that $\partial_x u$ exists on $[0,T) \times \mathbb R^d$ and is given by the statement of the proposition. From our convergence result (Lemma \ref{lem:unif_convergence_DHn}), we deduce that the sequence $\nabla \mathcal H^n$ converges to $\nabla H$. Hence the sequence $\partial_x h^n$ converges to $\partial_x h$ on $[0,T]\times \mathbb R^d$. The result follows immediately for $\partial_x u^n$. 
\end{proof}

Estimate \eqref{eq:estim_Mallia_deriv_H} becomes for $0\leq \theta \leq t\leq T$
\begin{align*}
  |D_\theta H_t| & = |\nabla H_t  (\nabla X_\theta)^{-1} \sigma(\theta,X_\theta) | \\
  & \leq C (T-t) \mathbb E^{\mathcal{F}_t} \left[\int_t^T  \left( | \partial_x (\mathcal L u)(s,X_s) |  +  |\partial_x\eta(s,X_s)| + | \partial_x\gamma(s,X_s)| \right) |D_\theta X_s| ds \right] \\
  & \leq C (T-t) \mathbb E^{\mathcal{F}_t} \left[\int_t^T |D_\theta X_s| ds \right].
\end{align*}
From Lemma \ref{lem:derive_eds} and H\"older's inequality, we deduce that 
$$|\partial_x h(t,x) | =  |\nabla H_t| \leq C (T-t).$$

\begin{remarque}
In \cite[Theorem 2.9]{grae:hors:sere:18}, it is already proved that $u$ is of class $C^1$ w.r.t. $t$. 
\end{remarque}

\subsection{Sensitivity in liquidation problem}

Malliavin calculus is a useful tool to analyze the sensitivity in finance, see among many others \cite{four:lasr:99,gobe:muno:05}. In the liquidation problem mentioned in the introduction, the optimal state process is given by 
$$\Xi_s = x \exp\parent{-\int_t^s \parent{\dfrac{Y_u}{\eta_u}}^{q-1} du}$$
or with the previous notations:
\begin{align*}
\Xi_s & = x \exp\parent{-\int_t^s  \dfrac{1}{(T-u)} \parent{ 1+ \dfrac{H_u}{\eta_u(T-u)}}^{q-1} du} \\
& = x\dfrac{T-s}{T-t} \exp\parent{-\int_t^s  \dfrac{1}{(T-u)} \left[ \parent{ 1+ \dfrac{H_u}{\eta_u(T-u)}}^{q-1} -1 \right]du} .
\end{align*}
In particular for $0\leq \theta \leq s < T$
\begin{align} \label{eq:Malliavin_deriv_Xi}
D_\theta \Xi_s & = -(q-1) \Xi_s \int_t^s  \left| \dfrac{Y_u}{\eta_u} \right|^{q-2} \mbox{sign }(Y_u) D_\theta \parent{\dfrac{Y_u}{\eta_u}}  du .
\end{align}
A key argument in the greeks computations is the positivity of the Malliavin covariance matrix. This property is ensured if there is a diffusion part in $\Xi$ with an elliptic diffusion matrix (see again \cite{four:lasr:99,gobe:muno:05}). In our case, there is no diffusion part for $\Xi$. Worse than this, we know some degenerate examples. 

\medskip 

Indeed in \cite[Section 5]{anki:jean:krus:13}, the authors consider the case where $\gamma=0$ and $\eta$ has uncorrelated multiplicative increments. In our setting, it means that:
\begin{lemme}
When $\eta$ is an It\^o process, $\eta$ has uncorrelated multiplicative increments if and only if the drift $b^\eta$ is of the form: $b^\eta_t = g(t) \eta_t$ where $g$ is a deterministic function. 
\end{lemme}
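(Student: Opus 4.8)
The plan is to first pin down the working meaning of the hypothesis. Following \cite[Section 5]{anki:jean:krus:13}, "$\eta$ has uncorrelated multiplicative increments" means that for every $0\le s\le t\le T$ the multiplicative increment $\eta_t/\eta_s$ is uncorrelated with the $\sigma$-field $\mathcal F_s$; since for a bounded random variable $X$ being uncorrelated with every bounded $\mathcal F_s$-measurable variable is equivalent to $\mathbb E^{\mathcal F_s}[X]$ being a.s.\ constant, the usable reformulation is that
\[
c(s,t):=\mathbb E^{\mathcal F_s}\!\left[\frac{\eta_t}{\eta_s}\right]\quad\text{is deterministic for all }0\le s\le t\le T.
\]
Assumption~\ref{assump:eta_gamma} gives $0<\eta_\star\le\eta_s<\eta^\star$, so all ratios and conditional expectations below are well defined and bounded, and the martingale part of $\eta$ has vanishing conditional expectation.

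For the "if" direction I would assume $b^\eta_u=g(u)\eta_u$ with $g$ deterministic (and bounded, being $b^\eta/\eta$). Fixing $s$ and setting $m(u):=\mathbb E^{\mathcal F_s}[\eta_u]$ for $u\ge s$, the dynamics of $\eta$ and the conditional Fubini theorem give $m(t)=\eta_s+\int_s^t g(u)m(u)\,du$. This linear integral equation has the unique solution $m(t)=\eta_s\exp(\int_s^t g(u)\,du)$, hence $c(s,t)=\exp(\int_s^t g(u)\,du)$ is deterministic, which is exactly the claim.

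For the "only if" direction I would first upgrade the hypothesis to an exponential form through a cocycle identity. Conditioning on $\mathcal F_r$ for $s\le r\le t$ and using that $c(r,t)$ is deterministic gives $\mathbb E^{\mathcal F_s}[\eta_t]=c(r,t)\,\mathbb E^{\mathcal F_s}[\eta_r]$, whence $c(s,t)=c(s,r)\,c(r,t)$. Because $c$ is positive and bounded away from $0$ and $\infty$, and $t\mapsto c(0,t)=\mathbb E[\eta_t]/\eta_0$ is absolutely continuous, this multiplicative cocycle forces $c(s,t)=\exp(\int_s^t g(u)\,du)$ for the bounded measurable function $g:=\partial_u\log\mathbb E[\eta_u]$. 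Thus $\mathbb E^{\mathcal F_s}[\eta_t]=\eta_s\exp(\int_s^t g)$; comparing this with $\mathbb E^{\mathcal F_s}[\eta_t]=\eta_s+\int_s^t\mathbb E^{\mathcal F_s}[b^\eta_u]\,du$ and using $\eta_s\big(e^{\int_s^t g}-1\big)=\int_s^t g(u)\,\mathbb E^{\mathcal F_s}[\eta_u]\,du$, a further conditional Fubini interchange yields
\[
\mathbb E^{\mathcal F_s}\!\left[\int_s^t\big(b^\eta_u-g(u)\eta_u\big)\,du\right]=0,\qquad 0\le s\le t\le T.
\]

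The final step is to pass from this integrated identity to the pointwise statement. I would introduce the absolutely continuous process $A_t:=\int_0^t\big(b^\eta_u-g(u)\eta_u\big)\,du$, which is bounded hence integrable; the displayed identity says precisely that $\mathbb E^{\mathcal F_s}[A_t]=A_s$, i.e.\ $A$ is a martingale. Being simultaneously continuous, of finite variation, and null at $0$, the process $A$ must vanish identically, so its density $b^\eta_u-g(u)\eta_u$ is zero for $d\mathbb P\otimes du$-a.e.\ $(u,\omega)$, which is the desired form of $b^\eta$ (the drift in an Itô decomposition being determined only up to such null sets). The main obstacle is this last extraction together with the derivation of the exponential form from the cocycle; the "continuous finite-variation martingale is constant" argument is what makes the extraction clean and avoids any delicate Lebesgue-differentiation or martingale-limit technicalities, while the two conditional-Fubini interchanges and the boundedness of $g$ are routine consequences of Assumption~\ref{assump:eta_gamma}.
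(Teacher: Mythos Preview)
Your proof is correct but proceeds along a different path than the paper. For the ``if'' direction you give a direct argument via the linear ODE for $m(t)=\mathbb E^{\mathcal F_s}[\eta_t]$, whereas the paper simply cites \cite[Example 5.1]{anki:jean:krus:13}. For the ``only if'' direction the paper invokes the characterisation from \cite[Lemma 5.1]{anki:jean:krus:13} that $\eta$ has uncorrelated multiplicative increments iff $M_t=\eta_t/\mathbb E[\eta_t]$ is a martingale, then applies It\^o's formula to $M$ (the denominator being deterministic) to read off that the drift $(\mathbb E[\eta_t]\,b^\eta_t-\mathbb E[b^\eta_t]\,\eta_t)/(\mathbb E[\eta_t])^2$ must vanish, giving $b^\eta_t=(\mathbb E[b^\eta_t]/\mathbb E[\eta_t])\,\eta_t$ in one line. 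Your route---cocycle identity, exponential form $c(s,t)=\exp\!\int_s^t g$, and the observation that $A_t=\int_0^t(b^\eta_u-g(u)\eta_u)\,du$ is a continuous finite-variation martingale hence zero---is longer but entirely self-contained and avoids any appeal to \cite{anki:jean:krus:13}. Both arguments identify the same function $g(t)=\mathbb E[b^\eta_t]/\mathbb E[\eta_t]=\partial_t\log\mathbb E[\eta_t]$; the paper's shortcut is simply that It\^o's formula packages your cocycle, exponential, and finite-variation-martingale steps into a single drift computation.
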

\begin{proof}
If the drift $b^\eta$ is of the form: $b^\eta_t = g(t) \eta_t$, from \cite[Example 5.1]{anki:jean:krus:13}, $\eta$ has uncorrelated multiplicative increments. Conversely we have 
$$\eta_t = \eta_0 + \int_0^t b^\eta_s ds + \int_0^t \sigma_s dW_s \Longrightarrow \mathbb E \eta_t = \eta_0 + \int_0^t \mathbb E (b^\eta_s) ds.$$
If we consider $M_t = \eta_t / \mathbb E \eta_t $, from \cite[Lemma 5.1]{anki:jean:krus:13}, $\eta$ has uncorrelated multiplicative increments if and only if $M$ is a martingale. But 
$$dM_t = \dfrac{1}{(\mathbb E \eta_t)^2} \left[  \mathbb E (\eta_t ) b^\eta_t - \mathbb E (b^\eta_t) \eta_t \right] dt + \dfrac{\sigma_t}{\mathbb E \eta_t} dW_t.$$
Hence $b^\eta_t = \left(  \dfrac{ \mathbb E b^\eta_t }{\mathbb E \eta_t} \right) \eta_t $.
\end{proof}
From \cite[Propositions 5.2 and 5.3]{anki:jean:krus:13}, we know that $\eta$ has uncorrelated multiplicative increments if and only if $\Xi$ is deterministic, that is $D_\theta \Xi_s = 0$ for any $0\leq \theta \leq s \leq T$. We also have another result.
\begin{lemme}
When $\gamma = 0$, $\eta$ has uncorrelated multiplicative increments if and only if for any $0\leq \theta \leq u < T$, $D_\theta \parent{\dfrac{Y_u}{\eta_u}}= 0$.
\end{lemme}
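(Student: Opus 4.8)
The plan is to use the explicit formula \eqref{eq:Malliavin_deriv_Xi} as the bridge between the Malliavin derivative of the optimal state $\Xi$ and that of $Y/\eta$. By the discussion preceding this lemma together with \cite[Propositions 5.2 and 5.3]{anki:jean:krus:13}, when $\gamma=0$ the process $\eta$ has uncorrelated multiplicative increments if and only if $\Xi$ is deterministic, i.e. $D_\theta\Xi_s=0$ for all $0\le\theta\le s\le T$. Hence it suffices to establish the equivalence
$$\Big( D_\theta\Xi_s=0 \text{ for all } 0\le\theta\le s\le T\Big) \iff \Big( D_\theta\parent{\dfrac{Y_u}{\eta_u}}=0 \text{ for all } 0\le\theta\le u<T\Big),$$
and for this I would rely entirely on \eqref{eq:Malliavin_deriv_Xi}, recalling that $D_\theta(Y_u/\eta_u)=0$ whenever $u<\theta$, so that the integral there effectively runs over $[\theta,s]$.

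The implication from right to left is immediate: if $D_\theta(Y_u/\eta_u)=0$ for a.e. $u\in[\theta,T)$, the integrand in \eqref{eq:Malliavin_deriv_Xi} vanishes and thus $D_\theta\Xi_s=0$. For the converse, I would first note that $\Xi_s\neq 0$ for $s<T$ (it equals $x$ times a strictly positive factor), so that \eqref{eq:Malliavin_deriv_Xi} forces
$$\int_\theta^s \left|\dfrac{Y_u}{\eta_u}\right|^{q-2}\mbox{sign}(Y_u)\, D_\theta\parent{\dfrac{Y_u}{\eta_u}}\,du=0 \quad\text{for every } s\in[\theta,T).$$
Since this holds for all $s$, Lebesgue differentiation in $s$ shows that the integrand is zero for a.e. $u$. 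The remaining point is to remove the weight $|Y_u/\eta_u|^{q-2}\mbox{sign}(Y_u)$, which is possible precisely when $Y$ is strictly positive away from $T$.

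The main step is therefore the strict positivity of $Y$ on $[0,T)$, and I expect this to be the only genuine subtlety. With $\gamma=0$, each truncated solution $Y^n$ solves the \emph{linear} BSDE $Y^n_u=n-\int_u^T a^n_s Y^n_s\,ds-\int_u^T Z^n_s\,dW_s$ with non-negative bounded coefficient $a^n_s=(p-1)|Y^n_s|^{q-1}/\eta_s^{q-1}$, whence the representation $Y^n_u=\mathbb E^{\mathcal F_u}\big[n\exp(-\int_u^T a^n_s\,ds)\big]>0$. Monotonicity of the sequence (Lemma \ref{prop:truncated:bsde}) then gives $Y_u\ge Y^1_u>0$ for every $u\in[0,T)$. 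Consequently $|Y_u/\eta_u|^{q-2}\mbox{sign}(Y_u)=(Y_u/\eta_u)^{q-2}>0$, so the vanishing of the integrand yields $D_\theta(Y_u/\eta_u)=0$ for a.e. $u\in[\theta,T)$, and the continuous version of $u\mapsto D_\theta(Y_u/\eta_u)$ (inherited from the continuity of $H$, $\eta$ and their Malliavin derivatives) upgrades this to all $u<T$. This positivity is exactly what makes the weight harmless in both regimes, $q>2$ (where it could vanish at $Y_u=0$) and $1<q<2$ (where it could blow up); everywhere else the argument is a direct reading of \eqref{eq:Malliavin_deriv_Xi}.
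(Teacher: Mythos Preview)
Your proof is correct, but for the forward direction you take a longer route than the paper. The paper's argument for ``uncorrelated multiplicative increments $\Rightarrow D_\theta(Y_u/\eta_u)=0$'' is a one-liner: it invokes directly from \cite[Proposition 5.3]{anki:jean:krus:13} that, under this hypothesis, the \emph{ratio} $Y/\eta$ is itself deterministic, so its Malliavin derivative vanishes immediately. You instead pass through the weaker consequence that $\Xi$ is deterministic, then differentiate the integral in \eqref{eq:Malliavin_deriv_Xi} and peel off the weight $|Y_u/\eta_u|^{q-2}\,\mathrm{sign}(Y_u)$ using the strict positivity of $Y$ on $[0,T)$. This works, and your positivity argument via the linear representation of $Y^n$ is clean, but it is extra machinery the paper avoids entirely. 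For the converse direction (``$D_\theta(Y/\eta)=0 \Rightarrow$ uncorrelated multiplicative increments''), both proofs are identical: plug into \eqref{eq:Malliavin_deriv_Xi} to get $D_\theta\Xi_s=0$, hence $\Xi$ deterministic, hence uncorrelated multiplicative increments. The upshot is that your argument is more self-contained (it only needs the $\Xi$-characterization, not the stronger $Y/\eta$-characterization from the reference), at the cost of the Lebesgue-differentiation and positivity steps.
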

\begin{proof}
From the proof of \cite[Proposition 5.3]{anki:jean:krus:13}, we also know that if $\eta$ has uncorrelated multiplicative increments, then $\dfrac{Y}{\eta}$ is deterministic, hence its Malliavin derivative is zero. Conversely if the Malliavin derivaitive of $Y/\eta$ is null, using \eqref{eq:Malliavin_deriv_Xi}, $D_\theta \Xi_s=0$, thus $\Xi$ is deterministic and $\eta$ has uncorrelated multiplicative increments. 
\end{proof}
Moreover we can explicitly compute $H$, either from \cite[Proposition 5.3]{anki:jean:krus:13} where $Y$ is given, or directly. 
\begin{lemme}
Assume that $\eta$ has uncorrelated multiplicative increments, with $b^\eta_t = g(t) \eta_t$. Then $H_t = \eta_t (T-t) h(t)$ with
$$h(t) =   -1 + \left( \dfrac{1}{T-t} \int_t^T  \exp\left( - (q-1) \int_t^s g(u)du  \right) ds \right)^{1-p} .$$
\end{lemme}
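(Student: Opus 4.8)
The plan is to exploit the structural fact, established in the preceding lemma, that when $\gamma = 0$ and $\eta$ has uncorrelated multiplicative increments the ratio $\psi := Y/\eta$ is \emph{deterministic}. This collapses the BSDE \eqref{eq:liquid_BSDE} to a scalar ODE for $\psi$, which can be solved explicitly; the value of $H$ is then read off from the asymptotic expansion \eqref{eq:asymp_Y}.

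First I would write $Y_t = \eta_t\,\psi(t)$ with $\psi$ deterministic and apply It\^o's formula, using $d\eta_t = g(t)\eta_t\,dt + \sigma^\eta_t\,dW_t$. Since
$$dY_t = \psi(t)\,d\eta_t + \eta_t \psi'(t)\,dt = \bigl[\psi(t)g(t)\eta_t + \eta_t\psi'(t)\bigr]\,dt + \psi(t)\sigma^\eta_t\,dW_t,$$
matching the $dt$-term against the drift $(p-1)Y_t^q/\eta_t^{q-1} = (p-1)\eta_t\psi(t)^q$ of \eqref{eq:liquid_BSDE} with $\gamma = 0$ (the martingale parts automatically agree and determine $Z$), and dividing by $\eta_t$, gives the Bernoulli ODE
$$\psi'(t) = (p-1)\psi(t)^q - g(t)\psi(t), \qquad \psi(T^-) = +\infty,$$
where the terminal blow-up comes from $\lim_{t\to T}Y_t = +\infty$ in Proposition \ref{prop:exist_sur_solution}.

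Next I would linearize via the substitution $w := \psi^{1-q}$. Using the conjugacy relation $(p-1)(q-1) = 1$, so that $(1-q)(p-1) = -1$, a direct computation gives the linear equation $w'(t) = -1 + (q-1)g(t)\,w(t)$ with the clean terminal condition $w(T) = 0$ (because $1-q < 0$ and $\psi \to +\infty$). Solving this first-order equation backward from $T$ yields
$$w(t) = \int_t^T \exp\left(-(q-1)\int_t^s g(u)\,du\right)\,ds,$$
and inverting, using $1/(q-1) = p-1$ so that $1/(1-q) = 1-p$, gives $\psi = w^{1-p}$.

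Finally I would substitute into \eqref{eq:asymp_Y}: from $\eta_t\psi(t) = \eta_t/(T-t)^{p-1} + H_t/(T-t)^p$ one obtains $H_t = \eta_t(T-t)^p\psi(t) - \eta_t(T-t) = \eta_t(T-t)\bigl[(T-t)^{p-1}\psi(t) - 1\bigr]$. Writing $(T-t)^{p-1}\psi(t) = (T-t)^{p-1}w(t)^{1-p} = \bigl((T-t)^{-1}w(t)\bigr)^{1-p}$ produces exactly $H_t = \eta_t(T-t)h(t)$ with the stated $h$. The main delicate point is the rigorous treatment of the singular terminal data: one must justify that $w(T)=0$ is the correct \emph{selecting} condition, i.e. that the explicit $\psi$ so obtained is the one attached to the minimal supersolution $Y$ and not to some other blow-up solution of the ODE. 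This is guaranteed precisely because the deterministic character of $\psi$ is already known from the preceding lemma, so that the ODE together with $\psi(T^-)=+\infty$ admits this single admissible solution.
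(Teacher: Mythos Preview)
Your proof is correct and takes a genuinely different route from the paper. The paper works directly with the BSDE \eqref{eq:BSDE_H} for $H$: it posits the ansatz $H_t=\eta_t(T-t)h(t)$, computes $dH_t$ and $F(t,H_t)$, and reduces the martingale condition to an ODE for $i:=h+1$ which it then solves; the conclusion is secured by the uniqueness of $H$ established in Proposition~\ref{prop:existence_H}. You instead start from the BSDE \eqref{eq:liquid_BSDE} for $Y$, exploit the structural fact (from the proof of the preceding lemma, ultimately \cite{anki:jean:krus:13}) that $\psi=Y/\eta$ is deterministic, derive the Bernoulli ODE for $\psi$, linearize via $w=\psi^{1-q}$, and read $H$ off from \eqref{eq:asymp_Y}. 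Your argument is shorter and better motivated---the ansatz in the paper is exactly the statement that $Y/\eta$ is a deterministic function of $t$, so you are making explicit what the paper leaves implicit---but it imports determinism of $\psi$ from outside, whereas the paper's version is self-contained modulo the uniqueness result for $H$ proved in the appendix. Your handling of the terminal selection ($w(T)=0$ is forced because $Y$ is finite on $[0,T)$ and blows up at $T$) is fine; the only cosmetic point is that determinism of $\psi$ is not the \emph{statement} of the preceding lemma but appears in its proof, so a more precise citation would point to \cite[Proposition~5.3]{anki:jean:krus:13}.
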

\begin{proof}
Since $\gamma= 0$ and $b^\eta_t = g(t) \eta_t$, $H$ is the solution of the BSDE with generator
\begin{align*}
F(t,h) & = (T-t) \eta_t g(t)  - (p-1)\eta_t \left[ \left( 1 + \dfrac{1}{\eta_t (T-t)} h\right) \left| 1 + \dfrac{1}{\eta_t (T-t)} h\right|^{q-1} -1 - q\dfrac{1}{\eta_t (T-t)} h \right]
\end{align*}
and terminal condition $0$. Make the ansatz that $H_t = \eta_t (T-t) h(t)$. Then
\begin{align*}
dH_t & = \left[ (T-t) h(t) \eta_t g(t) +\eta_t (T-t)h'(t) -\eta_t  h(t) \right] dt + \sigma^\eta_t dW_t, \\
F(t,H_t) & = (T-t) \eta_t g(t)  - (p-1)\eta_t \left[ \left( 1 + h(t) \right) \left| 1 +h(t) \right|^{q-1} -1 - qh(t) \right].
\end{align*}
Therefore $dH_t + F(t,H_t)dt $ is a martingale if 
$$(T-t) i(t) g(t) +  (T-t)i'(t)  - (p-1) \left[ i(t) \left| i(t) \right|^{q-1} -i(t)\right] = 0$$
with $i(t) = h(t) +1$. Define $G$ as the solution of $G'(t) = g(t) G(t)$ with $G(0)=1$. Then 
$$ (T-t) (i(t) G(t))'  = (p-1) G(t) \left[ i(t) \left| i(t) \right|^{q-1} - i(t)\right].$$
We can verify that: 
$\displaystyle i(t) G(t) = \left( \dfrac{1}{T-t} \int_t^T \dfrac{1}{G(s)^{q-1}} ds \right)^{1-p}.$
Thus 
$$h(t) = -1 + \left( \dfrac{1}{T-t} \int_t^T \left( \dfrac{G(t)}{G(s)} \right)^{q-1} ds \right)^{1-p}.$$
Note that $h(t) \sim (T-t)$ as $t$ goes to $T$. By uniqueness of $H$, we obtain the result. 
\end{proof}
Remark that we obtain an explicit expression for $Y$:
\begin{align*}
Y_t & = \dfrac{\eta_t}{(T-t)^{p-1}} + \dfrac{H_t}{(T-t)^p} =  \dfrac{\eta_t}{(T-t)^{p-1}} \left( \dfrac{1}{T-t} \int_t^T  \exp\left( - (q-1) \int_t^s g(u)du  \right) ds \right)^{1-p} \\
& =  \eta_t \left( \int_t^T \exp\left( - (q-1) \int_t^s g(u)du  \right) ds \right)^{1-p} .
\end{align*}

Let us consider the case where $\eta$ is deterministic. Then for $\theta \leq s < T$
\begin{align*}
D_\theta \Xi_s &  = -(q-1) \Xi_s \int_t^s \dfrac{1}{(T-u)^2 \eta_u}  \left| 1+ \dfrac{H_u}{\eta_u(T-u)} \right|^{q-2}   \mbox{sign }\parent{1+ \dfrac{H_u}{\eta_u(T-u)}} D_\theta H_udu.
\end{align*}
and $D_\theta H$ is the solution of the BSDE \eqref{eq:BSDE_Malliavin_deriv_H}, which becomes:
\begin{align*}
D_\theta H_t & = \int_t^T  \left[ (T-s)^{p} D_\theta \gamma_s  - \dfrac{\partial G}{\partial h}(s,H_s,\eta_s)  D_\theta H_s \right]ds - \int_t^T D_\theta Z^H_s dW_s = \mathbb E^{\mathcal{F}_t} \left[ \int_t^T (T-s)^{p} D_\theta \gamma_s \Gamma_{t,s} ds \right]
\end{align*}
where $\Gamma_{t,s}$ is given by \eqref{eq:def_Gamma}. If $\gamma_s = \gamma(s,X_s)$, then $D_\theta \gamma_s = (\partial_x \gamma) (s,X_s) D_\theta X_s$. In this case, we can find easy conditions on $\partial_x \gamma$ and the coefficients $b$ and $\sigma$ of the SDE of $X$ such that the Malliavin covariance matrix of $D_\theta H$ is definite positive. For example if $\partial_x \gamma$ is bounded away from zero and if the parameters $b$ and $\sigma$ satisfy the conditions of Lemma \ref{lem:derive_eds} and  if $\sigma$ is uniformly elliptic (see Proposition \ref{prop:regul_solution_PDE} for the definition), we can apply \cite[Theorems 2.3.1 and 2.3.3]{nual:06}.

\medskip 

Now in general we have for $0 \leq \theta \leq s < T$
\begin{align*}
D_\theta \Xi_s & = -(q-1) \Xi_s \int_t^s  \left| \dfrac{Y_u}{\eta_u} \right|^{q-2} \mbox{sign }(Y_u) D_\theta \parent{\dfrac{Y_u}{\eta_u}}  du \\
& = -(q-1) \Xi_s \int_t^s \dfrac{1}{(T-u)^2 \eta_u^2}  \left| 1+ \dfrac{H_u}{\eta_u(T-u)} \right|^{q-2}   \mbox{sign }\parent{1+ \dfrac{H_u}{\eta_u(T-u)}} \left[ \eta_u D_\theta H_u - H_u D_\theta \eta_u \right]  du
\end{align*}
with $D_\theta H$ given by \eqref{eq:BSDE_Malliavin_deriv_H}. Existence of tractable conditions such that the Malliavin covariance matrix is definite positive is left for further research. 

\section{Appendix}

Let us evoke the arguments of \cite[Theorem 23]{grae:popi:21}. We want to solve the BSDE \eqref{eq:BSDE_H}
\begin{equation*}
H_t = \int_t^T F(s,H_s) ds - \int_t^T Z^H_s dW_s = \mathbb E^{\mathcal{F}_t} \left[  \int_t^T F(s,H_s) ds \right]
\end{equation*}
where $F$ is given by \eqref{eq:gene_F}. 
We define the operator 
$$\Gamma(H)_t =  \mathbb E^{\mathcal{F}_t} \left[  \int_t^T F(s,H_s) ds \right]$$
and a solution is a fixed point of this operator $\Gamma$. 
\begin{proposition} \label{prop:existence_H}
If $\eta$ is bounded away from zero by $\eta_\star$ and if the drift $b^\eta$ of $\eta$ is bounded, 
there exists a process $(H,Z^H)$ solution of the previous BSDE and there exists three constants $\delta > 0$, $R > 0$ and $C$ such that a.s. on $[T-\delta,T]$, $|H_t | \leq R (T-t)^2$ and on $[0,T]$, $|H_t|\leq C$. 
\end{proposition}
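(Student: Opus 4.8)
The plan is to build $H$ first on the terminal strip $[T-\delta,T]$ by a contraction argument for the operator $\Gamma(H)_t = \mathbb E^{\mathcal F_t}[\int_t^T F(s,H_s)\,ds]$ in the weighted Banach space $\mathcal H^\delta$, and then to extend it to $[0,T-\delta]$ where the generator is no longer singular. I fix the constants $R$, $L$, $\delta$ as in \eqref{eq:def_constants} and work in the closed ball $B_R=\{H\in\mathcal H^\delta:\ \|H\|_{\mathcal H^\delta}\le R\}$. The choice $\delta\le\eta_\star/(2R)$ guarantees that every $H\in B_R$ satisfies $|H_s|\le R(T-s)^2$ and hence $\frac{|H_s|}{\eta_s(T-s)}\le\frac{R(T-s)}{\eta_\star}\le\frac12$, so that \eqref{eq:choix:delta} holds and the integral representation \eqref{eq:deriv_G_h} of $G$ is licit on the whole strip. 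Using that representation, the bound $|1+ax|^{q-2}\le 2^{|q-2|}$ for $|x|\le\frac12$ gives $|G(s,H_s,\eta_s)|\le\frac{qR^2 2^{|q-2|}}{2\eta_\star}(T-s)^2=\frac{LR}{2}(T-s)^2$.

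I would then estimate $\Gamma(H)_t$ by splitting $F$ into its free part $(T-s)b^\eta_s+(T-s)^p\gamma_s$ and the nonlinear part $-G(s,H_s,\eta_s)$. Integrating $(T-s)$, $(T-s)^p$ and $(T-s)^2$ and using $\delta\le 1$ yields $(T-t)^{-2}|\Gamma(H)_t|\le\frac{\|b^\eta\|_\infty}{2}+\frac{\gamma^\star}{p+1}+\frac{LR}{6}(T-t)$. Since $\frac{\|b^\eta\|_\infty}{2}+\frac{\gamma^\star}{p+1}=R/2$ by the definition of $R$, and $\frac{LR}{6}(T-t)\le\frac{LR\delta}{6}\le\frac{R}{12}$ because $\delta\le 1/(2L)$, one gets $\|\Gamma(H)\|_{\mathcal H^\delta}\le\frac{7R}{12}\le R$, i.e. $\Gamma(B_R)\subset B_R$. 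For the contraction, for $H,H'\in B_R$ I write $G(s,H_s,\eta_s)-G(s,H'_s,\eta_s)=\int_0^1\frac{\partial G}{\partial h}(s,H'_s+\lambda(H_s-H'_s),\eta_s)(H_s-H'_s)\,d\lambda$; the intermediate arguments still lie in $B_R$, so the bound $|\frac{\partial G}{\partial h}|\le\frac{qR}{\eta_\star}2^{|q-2|}=L$ read off from \eqref{eq:deriv_G_h} applies uniformly, whence $|\Gamma(H)_t-\Gamma(H')_t|\le L\|H-H'\|_{\mathcal H^\delta}\int_t^T(T-s)^2\,ds=\frac{L}{3}(T-t)^3\|H-H'\|_{\mathcal H^\delta}$. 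Dividing by $(T-t)^2$ and using $\frac{L\delta}{3}\le\frac16<1$ shows $\Gamma$ is a contraction, so Banach's theorem yields a unique $H\in B_R$ with $H_t=\Gamma(H)_t$ on $[T-\delta,T]$; the martingale representation supplies $Z^H$, and by construction $|H_t|\le R(T-t)^2$ there.

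Finally I would extend the solution to $[0,T-\delta]$: on this interval $T-s\ge\delta$, so $F$ is no longer singular, it has polynomial growth in $h$, and since $G(s,\cdot,\eta_s)$ is convex, $h\mapsto -G(s,h,\eta_s)$ is monotone (decreasing). With the bounded terminal datum $H_{T-\delta}$, which satisfies $|H_{T-\delta}|\le R\delta^2$, I can invoke the monotone BSDE theory of \cite[Theorem 4.2]{bria:dely:hu:03} to obtain a unique solution, bounded by comparison with constant super/subsolutions. Concatenating the two pieces (matching at $T-\delta$) gives $(H,Z^H)$ on $[0,T]$ with $|H_t|\le C$, as claimed.

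The main obstacle is the terminal-strip estimate: the generator $G$ blows up like $(T-t)^{-2}$ as $t\to T$, so only the weighted norm of $\mathcal H^\delta$ and the precise calibration of $\delta$ against $R$ and $L$ keep both the stability $\Gamma(B_R)\subset B_R$ and the contraction factor strictly below $1$. Handling the non-Lipschitz nonlinearity $|1+x|^{q-1}$ for $q\neq 2$ forces the use of the integral representation \eqref{eq:deriv_G_h}, which is valid precisely because \eqref{eq:choix:delta} keeps the argument bounded away from the singularity $1+x=0$.
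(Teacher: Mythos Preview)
Your argument is essentially the paper's: the same contraction for $\Gamma$ on the weighted space $\mathcal H^\delta$, the same calibration \eqref{eq:def_constants} of $R$, $L$, $\delta$, and the same extension to $[0,T-\delta]$ by monotone BSDE theory once the singularity is gone. Your stability and contraction bounds are even slightly sharper (factors $7/12$ and $1/6$ against the paper's $1$ and $1/2$) because you integrate $(T-s)^2$ exactly instead of majorising it by $(T-t)^2$; this changes nothing structurally.

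One slip worth fixing: you write that ``since $G(s,\cdot,\eta_s)$ is convex, $h\mapsto -G(s,h,\eta_s)$ is monotone (decreasing)''. Convexity does not imply monotonicity, and here $G(s,\cdot,\eta_s)$ is \emph{not} monotone in $h$: from \eqref{eq:deriv_G_h_0}, $\partial_h G(t,h,\eta)=\frac{p}{T-t}\bigl(|1+\frac{h}{\eta(T-t)}|^{q-1}-1\bigr)$ changes sign at $h=0$. What \cite[Theorem 4.2]{bria:dely:hu:03} (or the references the paper uses, \cite{pard:rasc:14} and \cite{bech:06}) actually requires is the one-sided Lipschitz condition $(F(t,h)-F(t,\tilde h))(h-\tilde h)\le\mu(h-\tilde h)^2$, and this follows on $[0,T-\delta]$ from $-\partial_h G\le\frac{p}{T-t}\le\frac{p}{\delta}$, which is exactly what the paper checks. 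With that correction your extension step is valid, and the $L^\infty$ bound on $[0,T-\delta]$ then follows from the bounded terminal datum $|H_{T-\delta}|\le R\delta^2$ together with the growth and monotonicity of $F$.
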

\begin{proof}
Remark that 
\begin{align*}
G(t,h,\eta)& = (p-1)\eta \left[\left( 1 + \dfrac{1}{\eta (T-t)} h\right) \left| 1 + \dfrac{1}{\eta (T-t)} h\right|^{q-1} -1 - q\dfrac{1}{\eta (T-t)} h \right]\\
& = \dfrac{q h^2}{\eta (T-t)^2} \int_0^1\left| 1 + a\dfrac{1}{\eta (T-t)} h\right|^{q-2} \mbox{sign } \left(1 + a\dfrac{1}{\eta (T-t)} h\right) (1-a) da.
\end{align*}
Now suppose for a while that $|H_t| \leq R(T-t)^2$ on $[T-\delta , T]$. Then for any $a \in [0,1]$ and $T-\delta \leq t \leq T$
$$a\dfrac{|H_t|}{\eta_t (T-t)} \leq a \dfrac{R (T-t)}{\eta_t} \leq \dfrac{R \delta}{\eta_\star}\leq \dfrac{1}{2},$$
if we choose $\delta \leq \dfrac{\eta_\star}{2R}$. Moreover 
\begin{align*}
  \dfrac{\partial G}{\partial h}(t,h,\eta)& = \dfrac{p}{ (T-t)} \left( \left| 1 + \dfrac{1}{\eta (T-t)} h\right|^{q-1} -  1 \right) \\
  & = \dfrac{qh}{\eta (T-t)^2} \int_0^1\left| 1 + a\dfrac{1}{\eta (T-t)} h\right|^{q-2} \mbox{sign } \left(1 + a\dfrac{1}{\eta (T-t)} h\right)  da.
  \end{align*}
Thus if again $|H_t| \leq R(T-t)^2$ on $[T-\delta , T]$, and under our previous condition on $\delta$, we have 
\begin{align*}
 \left| \dfrac{\partial G}{\partial h}(t,H_t,\eta_t) \right|& \leq \dfrac{qR}{\eta_\star} 2^{|q-2|} = L.
  \end{align*}
Therefore if both $H$ and $\widetilde H$ are bounded from above by $t\mapsto R(T-t)^2$ on $[T-\delta , T]$, then 
\begin{align*}
& | \Gamma(H)_t - \Gamma(\widetilde H)_t | \leq  \mathbb E^{\mathcal{F}_t} \left[  \int_t^T |F(s,H_s) -F(s,\widetilde H_s)| ds \right] \leq  \mathbb E^{\mathcal{F}_t} \left[  \int_t^T  L |H_s - \widetilde H_s| ds \right]
\\ &\quad  \leq \mathbb E^{\mathcal{F}_t} \left[  \int_t^T  L  \dfrac{|H_s - \widetilde H_s|}{(T-s)^2} (T-s)^2 ds \right] \leq L(T-t)^3 \|H-\widetilde H\|_{\mathcal H^\delta} \\
& \quad \leq \delta L \|H-\widetilde H\|_{\mathcal H^\delta}  (T-t)^2 \leq \dfrac{1}{2} \|H-\widetilde H\|_{\mathcal H^\delta}  (T-t)^2
\end{align*}
if $\delta \leq 1/(2L)$. Hence 
$$\|\Gamma(H) - \Gamma(\widetilde H)\|_{\mathcal H^\delta} \leq \dfrac{1}{2 } \|H-\widetilde H\|_{\mathcal H^\delta}$$
that is $\Gamma$ is a contraction on the ball of $\mathcal H^\delta$ with radius $R$. Finally 
\begin{align*}
& | \Gamma(H)_t  | \leq  | \Gamma(H)_t  - \Gamma(0)_t | + |\Gamma(0)_t| \leq  \dfrac{1}{2 } \|H\|_{\mathcal H^\delta} (T-t)^2 + \mathbb E^{\mathcal{F}_t} \left[ \int_t^T \left|(T-s) b^\eta_s  + (T-s)^{p} \gamma_s \right| ds \right]   \\
&\quad  \leq \dfrac{R}{2} (T-t)^2 + (T-t)^2  \left[ \dfrac{1}{2}  \|b^\eta\|_\infty  + \dfrac{1}{p+1} (T-t)^{p-1} \gamma^\star \right]  \leq  \left[ \dfrac{1}{2}  \|b^\eta\|_\infty  + \dfrac{1}{p+1} \gamma^\star + \dfrac{R}{2} \right] (T-t)^2 \\
& \quad \leq R (T-t)^2
\end{align*}
if $\delta \leq 1$ and $R =  \|b^\eta\|_\infty  + \dfrac{2}{p+1} \gamma^\star $. To summarize, if 
$$R =  \|b^\eta\|_\infty  + \dfrac{2}{p+1} \gamma^\star ,\quad L= \dfrac{qR}{\eta_\star} 2^{|q-2|},\quad \delta = \min \left( 1, T, \dfrac{1}{2L}, \dfrac{\eta_\star}{2R}\right)$$
then $\Gamma$ is a contraction from the ball of $\mathcal H^\delta$ with radius $R$ into itself, thus has a unique fixed point $H$, which is the solution of the wanted BSDE. Moreover the solution $(H,Z^H)$ is the limit in $\mathcal H^\delta$ of the sequence $(H^k,Z^{H,k})$ unique solution in $\mathcal H^\delta$ of 
$$H^k_t = \int_t^T F(s,H^{k-1}_s) ds - \int_t^T Z^{H,k}_s dW_s$$
with $(H^0,Z^{H,0}) = (0,0)$ and for any $k$ and $t \in [T-\delta,T]$, $|H^k_t| \leq R (T-t)^2$.

Note that the generator $F$ is continuous and monotone in $h$ on $[0,T-\delta]$: 
\begin{align*}
F(t,h) - F(t,\tilde h) & = - (p-1)\eta_t \left[ \left( 1 + \dfrac{1}{\eta_t (T-t)} h\right) \left| 1 + \dfrac{1}{\eta_t (T-t)} h\right|^{q-1} - \left( 1 + \dfrac{1}{\eta_t (T-t)} \tilde h\right) \left| 1 + \dfrac{1}{\eta_t (T-t)} \tilde h\right|^{q-1} \right] \\
& \qquad +  \dfrac{p}{T-t} ( h  - \tilde h ) \\
& = - \dfrac{p}{(T-t)}(h-\tilde h) \int_0^1 \left| 1 +  \dfrac{1}{\eta_t (T-t)} \tilde h+ a  \dfrac{1}{\eta_t (T-t)} (h-\tilde h) \right|^{q-1}  da   +  \dfrac{p}{T-t} (   h  - \tilde h ),
\end{align*}
thus for $t\leq T-\delta$
$$
(F(t,h) - F(t,\tilde h))(h-\tilde h) \leq  \dfrac{p}{\delta} (   h  - \tilde h )^2. 
$$
And \begin{align*}
|F(t,h)| & \leq  (T-t) \|b^\eta\|_\infty  + (T-t)^{p} \gamma^\star  + (p-1)\eta_t \left[  \left| 1 + \dfrac{1}{\eta_t \delta} h\right|^{q} +1 + \dfrac{q}{\eta_t \delta} |h| \right] \\
& \leq T \|b^\eta\|_\infty  + T^{p} \gamma^\star + (p-1)\eta_t 2^{q-1}   \left( 1 + \dfrac{1}{\eta_t^q \delta^q} |h|^q \right)   + \dfrac{p}{ \delta} |h|  \\
& \leq  T \|b^\eta\|_\infty  + T^{p} \gamma^\star + \dfrac{p}{ \delta} |h|  + (p-1) 2^{q-1}  \dfrac{1}{\eta_\star^{q-1} \delta^q} |h|^q +  (p-1)(2^{q-1} +1)\eta_t .
\end{align*}
In particular 
$$\mathbb E \left[ \sup_{|h| \leq M} |F(t,h)|  \leq C (1 + \mathbb E \eta_t ) \right] < + \infty.$$
Since $H_{T-\delta}$ is a bounded random variable, the BSDE 
$$H_t =H_{T-\delta} +  \int_t^{T-\delta} F(s,H_s) ds - \int_t^{T-\delta} Z^H_s dW_s $$
has a unique solution on $[0,T-\delta]$ and there exists a constant $C$ such that $|H_t| \leq C$ on $[0,T-\delta]$. See \cite[Proposition 5.24]{pard:rasc:14} and \cite[Proposition 3.3]{bech:06}. Notice that the martingale $\int Z^H dW$ is a BMO-martingale on $[0,T-\delta]$. Since $H$ is bounded (by $C$), we can modify the generator $F$ outside the interval $[-C,C]$, such that $F$ is Lipschitz continuous and with linear growth w.r.t. $h$. Then we can define the sequence $H^k$ on $[0,T-\delta]$, converging to $H$ and such that a.s. $|H^k_t|\leq C$. 
\end{proof}

We define on $[0,T)$ the process 
$$\widehat Y_t = \dfrac{\eta_t}{(T-t)^{p-1}} + \dfrac{1}{(T-t)^{p}} H_t.$$
We can easily verify that for any $0\leq t \leq s < T$
$$\widehat  Y_t = \widehat Y_s + \int_t^s \left( -(p-1)\dfrac{|\widehat  Y_u|^{q-1} }{\eta_u^{q-1}}\widehat  Y_u + \gamma_u \right) du - \int_t^s \widehat Z_u dW_u,$$
and a.s. 
$$\lim_{t \to T} \widehat  Y_t  = +\infty.$$
Moreover on $[T-\delta,T]$, 
$$\widehat Y_t \geq \dfrac{\eta_t}{(T-t)^{p-1}} - R\dfrac{1}{(T-t)^{p-2}} \geq \dfrac{1}{(T-t)^{p-1}} \left( \eta_\star - R (T-t)\right)\geq \dfrac{\eta_\star}{2(T-t)^{p-1}} .$$
Thus $\widehat Y$ is non-negative on $[T-\delta, T]$. By standard comparison principle on $[0,T-\delta]$ (see \cite[Section 5.3.6]{pard:rasc:14}), $\widehat Y$ is also non-negative on $[0,T]$. Since $Y$ is the minimal non-negative solution, a.s. for any $t \in [0,T]$, $Y_t \leq \widehat  Y_t $.

From the uniqueness result of \cite[Theorem 10]{grae:popi:21}, $\widehat  Y= Y$ and thus the minimal solution of \eqref{eq:liquid_BSDE} is given by \eqref{eq:asymp_Y}. Let us evoke the main arguments. 
\begin{proposition}
If $\eta$ and the process $(\sigma^\eta_u)^2 \eta_u^{-q-1}$ are bounded, $\widehat  Y= Y$.
\end{proposition}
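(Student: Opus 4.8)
The plan is to prove the two inequalities $Y\le\widehat Y$ and $\widehat Y\le Y$ separately; the first is already established, so the real work is the reverse inequality. Indeed, the preceding computations show that $\widehat Y$ is a non-negative process satisfying the dynamics of \eqref{eq:liquid_BSDE} on every interval $[t,s]\subset[0,T)$ and that $\lim_{t\to T}\widehat Y_t=+\infty$; since $Y$ is by construction the minimal non-negative supersolution with these properties, we already have $Y_t\le\widehat Y_t$ a.s. for all $t\in[0,T)$. It therefore remains to prove $\widehat Y_t\le Y_t$, which together with minimality forces $\widehat Y=Y$.

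First I would exploit that $\widehat Y$ and $Y$ solve the same, now genuinely non-singular, BSDE on each $[t,s]\subset[0,T)$, so their difference $\Delta:=\widehat Y-Y\ge 0$ solves a linear BSDE. Linearizing the nonlinearity $y\mapsto(p-1)|y|^{q-1}y/\eta^{q-1}$ by the mean value theorem and solving the resulting linear equation yields, for $t\le s<T$,
\[
\Delta_t=\mathbb E^{\mathcal F_t}\!\left[\Delta_s\exp\!\left(-\int_t^s\beta_u\,du\right)\right],\qquad \beta_u=(p-1)q\,\frac{|\zeta_u|^{q-1}}{\eta_u^{q-1}}\ge 0,
\]
where $\zeta_u$ lies between $Y_u$ and $\widehat Y_u$. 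The idea is then to let $s\uparrow T$: the factor $\Delta_s$ is controlled from above by $\widehat Y_s\le C(T-s)^{-(p-1)}$ (using $|H_s|\le R(T-s)^2$ from \eqref{eq:asymp_Y}), while the discount factor must decay fast enough to absorb this blow-up.

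The crux will be a sharp lower bound on the discount rate $\beta_u$. Since $\zeta_u\ge Y_u$, I would invoke the leading-order asymptotics $(T-u)^{p-1}Y_u\to\eta_u$ valid near $T$ — this is exactly where the boundedness of $\eta$ and of $(\sigma^\eta_u)^2\eta_u^{-q-1}$ enters, through \cite[Theorem 10]{grae:popi:21} — to obtain, for any $\varepsilon>0$ and $u$ close enough to $T$, the estimate $\beta_u\ge(1-\varepsilon)^{q-1}(p-1)q/(T-u)$, where I use the conjugacy relation $(p-1)(q-1)=1$. Integrating gives $\exp(-\int_t^s\beta_u\,du)\le\big((T-s)/(T-t)\big)^{\kappa}$ with $\kappa=(1-\varepsilon)^{q-1}(p-1)q$, which strictly exceeds $p-1$ since $q>1$. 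Hence $\Delta_s\exp(-\int_t^s\beta_u\,du)\le C(T-t)^{-\kappa}(T-s)^{\kappa-(p-1)}\to 0$, and dominated convergence yields $\Delta_t=0$, i.e. $\widehat Y=Y$.

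The hard part will be the sharp lower asymptotic $(T-u)^{p-1}Y_u\to\eta_u$: the naive uniform bound $Y_u\ge\eta_\star(T-u)^{-(p-1)}$ only produces $\kappa=(p-1)q(\eta_\star/\eta^\star)^{q-1}$, which need not exceed the threshold $p-1$ when $\eta_\star\ll\eta^\star$. Pushing the discount exponent above $p-1$ requires the precise leading coefficient $\eta_u$ rather than the crude constant $\eta_\star$, and this quantitative refinement is precisely what \cite[Theorem 10]{grae:popi:21} supplies under the stated boundedness hypotheses on $\eta$ and $(\sigma^\eta_u)^2\eta_u^{-q-1}$. The remaining steps — the linearization, and the integrability bookkeeping needed to justify the conditional-expectation representation and the passage to the limit — I expect to be routine.
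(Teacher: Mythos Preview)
Your overall strategy is sound and would succeed, but there is a circularity in your citation that must be repaired. You invoke \cite[Theorem 10]{grae:popi:21} to obtain the sharp asymptotic $(T-u)^{p-1}Y_u\to\eta_u$; however, Theorem~10 in that reference is precisely the uniqueness result $\widehat Y=Y$ that the present proposition restates, so it cannot be used as an input here. The correct ingredient is the \emph{a priori lower bound}
\[
Y_t\ \ge\ \left[\mathbb E^{\mathcal F_t}\!\int_t^T\eta_s^{1-q}\,ds\right]^{1-p}
\]
(\cite[Lemma 6]{grae:popi:21}), valid for any non-negative supersolution. An It\^o expansion of $\eta^{1-q}$ gives $\mathbb E^{\mathcal F_t}\!\int_t^T\eta_s^{1-q}\,ds=(T-t)\eta_t^{1-q}\big(1+O(T-t)\big)$ --- this is exactly where the boundedness of $\eta$ and of $(\sigma^\eta)^2\eta^{-q-1}$ enters --- and hence $(T-t)^{p-1}Y_t/\eta_t\ge 1-O(T-t)$ a.s. This is the refinement you need for $\beta_u\ge(1-\varepsilon)^{q-1}p/(T-u)$; with the citation corrected, your argument goes through.

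The paper takes a different, somewhat more economical route. It passes to the remainder level, writing $Y_t=\eta_t(T-t)^{1-p}+(T-t)^{-p}\mathcal H_t$ and proving $\Delta H:=H-\mathcal H\le 0$. Because both $H_t$ and $\mathcal H_t$ tend to $0$ at $T$ (from $|H_t|\le R(T-t)^2$ and $-(T-t)\eta_t\le\mathcal H_t\le H_t$), the terminal value in the linear representation for $\Delta H$ vanishes, so one only needs the discount factor $\exp\!\big(-(p-1)\Upsilon_{t,T-\varepsilon}\big)$ to be \emph{bounded}, not to decay with any particular exponent; a convexity inequality for $y\mapsto|y|^{q-1}y$ disposes of the integral term. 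The paper uses the same Lemma~6 lower bound, but in a softer way --- merely to control the negative part of $\Upsilon$ uniformly --- rather than to pin down the leading coefficient of $\beta_u$. Your approach is more direct at the $Y$ level and avoids the $H$-decomposition, at the price of needing the sharper quantitative use of the lower bound; the paper's $H$-level argument trades the algebra of the decomposition for a weaker requirement on the discount factor.
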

\begin{proof}
We split $Y$ as follows 
$$Y_t = \dfrac{\eta_t}{(T-t)^{p-1}} + \dfrac{1}{(T-t)^{p}} \mathcal H_t.$$
Since $\widehat Y \geq Y$, we deduce that a.s. for any $t$, $ \mathcal H_t \leq H_t$. Our goal is to prove that $\mathcal H = H$, thus $\widehat Y = Y$.

Since $Y_t \geq 0$, for any $t$, $-(T-t) \eta_t \leq \mathcal H_t$, and on $[T-\delta,T]$, $\mathcal H_t \leq H_t \leq R(T-t)^2$.
Thus a.s. $\lim_{t\to T} \mathcal H_t = 0$. From the dynamics of $Y$, $\mathcal H$ solves the BSDE \eqref{eq:BSDE_H} on $[0,\tau]$ for any $\tau < T$. Finally
$$1+  \dfrac{\mathcal H_t}{\eta_t (T-t)} = (T-t)^{p-1} \dfrac{Y_t }{\eta_t} \geq 0.$$
Hence
\begin{align*}
F(t,\mathcal H_t) & = \left[ (T-t) b^\eta_t  + (T-t)^{p} \gamma_t \right] - (p-1)\eta_t \left[ \left( 1 + \dfrac{1}{\eta_t (T-t)} \mathcal H_t\right) \left| 1 + \dfrac{1}{\eta_t (T-t)} \mathcal H_t\right|^{q-1} -1 - q\dfrac{1}{\eta_t (T-t)} \mathcal H_t \right] 
\end{align*}
is controlled on $[0,T]$: 
$$
- \eta_t  - (p-1)\eta_t \left( 1 + \dfrac{R}{\eta_t }(T-t) \right)^{q}   \leq F(t,\mathcal H_t) - \left[ (T-t) b^\eta_t  + (T-t)^{p} \gamma_t \right] \leq (p-1)\eta_t + pR(T-t) . 
$$
Hence $\mathcal H$ is also a solution of the BSDE \eqref{eq:BSDE_H}. 

Now let us consider $\Delta H=  H - \mathcal H$, $\Delta Z=  Z^H - \mathcal  Z$ and we proceed as in the proof of \cite[Proposition 20]{grae:popi:21}. We denote 
$$g(y) =  |y|^{q-1} y, \quad g'(y) = q |y|^{q-1},\quad g''(y) = q(q-1) |y|^{q-2} \mbox{sgn} (y).$$
Then for $0 \leq t \leq T$
\begin{align*}
\Delta H_t & = \int_t^T F(s, H_s)  - F(s,\mathcal H_s) ds - \int_t^T \Delta Z_s dW_s \\
& = -(p-1)\int_t^T \eta_s \left[ g \left( 1 + \dfrac{H_s}{\eta_s (T-s)}  \right) - g \left( 1 + \dfrac{\mathcal  H_s}{\eta_s (T-s)} \right) -g' \left( 1 + \dfrac{\mathcal H_s}{\eta_s (T-s)} \right) \dfrac{\Delta H_s}{\eta_s(T-s)}    \right] ds \\
& \qquad  -(p-1)\int_t^T \eta_s \left[ g' \left( 1 + \dfrac{\mathcal H_s}{\eta_s (T-s)} \right) -q \right]  \dfrac{\Delta H_s}{\eta_s(T-s)}  ds  - \int_t^T \Delta Z_s dW_s .
\end{align*}
Define for $T-\delta \leq t \leq s < T$
\begin{align*}
\Xi_s & = \eta_s \left[ g \left( 1 + \dfrac{H_s}{\eta_s (T-s)}  \right) - g \left( 1 + \dfrac{\mathcal H_s}{\eta_s (T-s)} \right) -g' \left( 1 + \dfrac{\mathcal H_s}{\eta_s (T-s)} \right) \dfrac{\Delta H_s}{\eta_s(T-s)}    \right] , \\
\Upsilon_{t,s} & = \int_t^s \eta_u \left[ g' \left( 1 + \dfrac{\mathcal H_u}{\eta_u (T-u)} \right) -q \right]  \dfrac{1}{\eta_u(T-u)} du = \int_t^s q \left[ \left| 1 + \dfrac{\mathcal H_u}{\eta_u (T-u)} \right|^{q-1} -1 \right]  \dfrac{1}{(T-u)} du . 
\end{align*}
Then for any $\varepsilon > 0$
\begin{align*}
\Delta H_t & =  \mathbb E^{\mathcal{F}_t} \left[ \Delta H_{T-\varepsilon} \exp(-(p-1) \Upsilon_{t,T-\varepsilon} )  \right] -(p-1) \mathbb E^{\mathcal{F}_t} \left[ \int_t^{T-\varepsilon} \Xi_s \exp(-(p-1) \Upsilon_{t,s} ) ds \right] \\
& \leq \mathbb E^{\mathcal{F}_t} \left[ \Delta H_{T-\varepsilon} \exp(-(p-1) \Upsilon_{t,T-\varepsilon} ) \right] 
\end{align*}
since $\Xi_s \geq 0$ because $g$ is a convex function on $[0,\infty)$. 

Let us explain how to control the negative part of $\Upsilon_{t,s}$. From \cite[Lemma 6]{grae:popi:21}, we also know that a.s. for any $t \in [0,T]$
$$Y_t \geq \left[ \mathbb E^{\mathcal{F}_t} \left( \int_t^T \eta_s^{1-q} ds \right) \right]^{1-p}.$$
Thus 
$$1+  \dfrac{\mathcal H_t}{\eta_t (T-t)} = (T-t)^{p-1} \dfrac{Y_t}{\eta_t}\geq \left[ \dfrac{\eta_t^{q-1}}{T-t} \mathbb E^{\mathcal{F}_t} \left( \int_t^T \eta_s^{1-q} ds \right) \right]^{1-p}  \geq \dfrac{\eta_\star}{\eta_t}.$$
Evoke that $\eta$ is an It\^o process, bounded from below by $\eta_\star > 0$. By It\^o's formula for $t \leq s \leq T$
$$ \eta_s^{1-q} = \eta_t^{1-q} + \int_t^s (1-q) \eta_u^{-q} b^\eta_u du + \int_t^s (1-q) \eta_u^{-q} \sigma^\eta_u dW_u + \dfrac{q(q-1)}{2} \int_t^s  \eta_u^{-q-1} |\sigma^\eta_u|^2 du.$$
Hence
\begin{align*}
\mathbb E^{\mathcal{F}_t} \left( \int_t^T \eta_s^{1-q} ds \right) & = (T-t) \eta_t^{1-q} +  (q-1) \mathbb E^{\mathcal{F}_t} \left[ \int_t^T  \left( \int_t^s  (- \eta_u^{-q} b^\eta_u +  \dfrac{q}{2}  \eta_u^{-q-1} |\sigma^\eta_u|^2 ) du \right) ds \right] \\
& = (T-t) \eta_t^{1-q} +  (q-1) \mathbb E^{\mathcal{F}_t} \left[ \int_t^T     (T-u)\theta_u du \right] 
\end{align*}
with 
$$\theta_u = - \eta_u^{-q} b^\eta_u +  \dfrac{q}{2}  \eta_u^{-q-1} |\sigma^\eta_u|^2 = \dfrac{1}{\eta_u^{q-1}} \left(  \dfrac{q}{2} \dfrac{ |\sigma^\eta_u|^2}{\eta_u^2}    -\dfrac{b^\eta_u}{\eta_u} \right)  .$$
Thus
\begin{align*}
Y_t & \geq \left[ (T-t) \eta_t^{1-q} +  (q-1) \mathbb E^{\mathcal{F}_t} \left( \int_t^T (T-u)\theta_u du \right)  \right]^{1-p} \\
& \geq \dfrac{\eta_t}{(T-t)^{p-1}}  \left[ 1 +  
\dfrac{ (q-1) \eta_t^{q-1}}{T-t} \mathbb E^{\mathcal{F}_t} \left( \int_t^T (T-u)\theta_u du \right)  \right]^{1-p} ,
\end{align*}
and
\begin{align*}
(T-t)^{p-1} \dfrac{Y_t}{\eta_t}  & = 1 +  \dfrac{\mathcal H_t}{\eta_t (T-t)} \geq \left[ 1 +  
\dfrac{ (q-1) \eta_t^{q-1}}{T-t} \mathbb E^{\mathcal{F}_t} \left( \int_t^T     (T-u)\theta_u du \right)  \right]^{1-p}  .
\end{align*}
We deduce that 
\begin{align*}
& \left|1+  \dfrac{\mathcal H_t}{\eta_t (T-t)} \right|^{q-1} -1  \geq \left[ 1 +  
\dfrac{ (q-1) \eta_t^{q-1}}{T-t} \mathbb E^{\mathcal{F}_t} \left( \int_t^T     (T-u)\theta_u du \right)  \right]^{-1}  -1 \\
&\quad =- \left[ 1 +  
\dfrac{ (q-1) \eta_t^{q-1}}{T-t} \mathbb E^{\mathcal{F}_t} \left( \int_t^T     (T-u)\theta_u du \right)  \right]^{-1} \dfrac{ (q-1) \eta_t^{q-1}}{T-t} \mathbb E^{\mathcal{F}_t} \left( \int_t^T     (T-u)\theta_u du \right) \\
& \quad \geq -  \dfrac{ (q-1) \eta_t^{q-1}}{T-t}\mathbb E^{\mathcal{F}_t} \left( \int_t^T (T-u)\theta_u du   \right) .
\end{align*}
Hence 
\begin{align*}
(p-1)\Upsilon_{t,s} & = (p-1) \int_t^s q \left[ \left| 1 + \dfrac{\mathcal H_u}{\eta_u (T-u)} \right|^{q-1} -1 \right]  \dfrac{1}{(T-u)} du \geq -  q \int_t^s \dfrac{  \eta_u^{q-1}}{(T-u)^2}\mathbb E^{\mathcal{F}_u} \left( \int_u^T (T-r)\theta_r dr \right) du . 
\end{align*}
If $\eta$ and $\theta$ are bounded by $\eta^\star$ and $\|\theta\|_\infty$, we obtain
\begin{align*}
(p-1)\Upsilon_{t,s} &  \geq -  \frac{q}{2} (\eta^\star)^{q-1} \|\theta\|_\infty . 
\end{align*}
Since for any $\varepsilon > 0$,
\begin{align*}
\Delta H_t &  \leq \mathbb E^{\mathcal{F}_t} \left[ \Delta H_{T-\varepsilon} \exp(-(p-1) \Upsilon_{t,T-\varepsilon} )  \right] \leq  C \mathbb E^{\mathcal{F}_t} \left[ \Delta H_{T-\varepsilon} \right]
\end{align*}
we can pass to the limit to deduce that $\Delta H \leq 0$, that is $H\leq \mathcal H$. 
 \end{proof}

\bibliographystyle{abbrv}
\bibliography{biblio}

\def\polhk#1{\setbox0=\hbox{#1}{\ooalign{\hidewidth
  \lower1.5ex\hbox{`}\hidewidth\crcr\unhbox0}}}
  \def\polhk#1{\setbox0=\hbox{#1}{\ooalign{\hidewidth
  \lower1.5ex\hbox{`}\hidewidth\crcr\unhbox0}}}
  \def\polhk#1{\setbox0=\hbox{#1}{\ooalign{\hidewidth
  \lower1.5ex\hbox{`}\hidewidth\crcr\unhbox0}}}
\begin{thebibliography}{10}

\bibitem{almg:chri:01}
R.~Almgren and N.~Chriss.
\newblock Optimal execution of portfolio transactions.
\newblock {\em Journal of Risk}, 3:5--40, 2001.

\bibitem{almg:thum:haup:05}
R.~Almgren, C.~Thum, E.~Hauptmann, and H.~Li.
\newblock Direct estimation of equity market impact.
\newblock 2005.

\bibitem{anki:jean:krus:13}
S.~Ankirchner, M.~Jeanblanc, and T.~Kruse.
\newblock B{SDE}s with {S}ingular {T}erminal {C}ondition and a {C}ontrol
  {P}roblem with {C}onstraints.
\newblock {\em SIAM J. Control Optim.}, 52(2):893--913, 2014.

\bibitem{bech:06}
D.~Becherer.
\newblock Bounded solutions to backward {SDE}'s with jumps for utility
  optimization and indifference hedging.
\newblock {\em Ann. Appl. Probab.}, 16(4):2027--2054, 2006.

\bibitem{bria:dely:hu:03}
P.~Briand, B.~Delyon, Y.~Hu, E.~Pardoux, and L.~Stoica.
\newblock {$L^p$} solutions of backward stochastic differential equations.
\newblock {\em Stochastic Process. Appl.}, 108(1):109--129, 2003.

\bibitem{caci:deni:popi:25}
D.~Cacitti-Holland, L.~Denis, and A.~Popier.
\newblock Growth condition on the generator of bsde with singular terminal
  value ensuring continuity up to terminal time.
\newblock {\em Stochastic Processes and their Applications}, page 104588, 2025.

\bibitem{elka:peng:quen:97}
N.~{El Karoui}, S.~Peng, and M.~Quenez.
\newblock Backward stochastic differential equations in finance.
\newblock {\em Math. Finance}, 7(1):1--71, 1997.

\bibitem{four:lasr:99}
E.~Fourni\'e, J.-M. Lasry, J.~Lebuchoux, P.-L. Lions, and N.~Touzi.
\newblock Applications of {M}alliavin calculus to {M}onte {C}arlo methods in
  finance.
\newblock {\em Finance Stoch.}, 3(4):391--412, 1999.

\bibitem{gobe:muno:05}
E.~Gobet and R.~Munos.
\newblock Sensitivity analysis using {I}t\^o-{M}alliavin calculus and
  martingales, and application to stochastic optimal control.
\newblock {\em SIAM J. Control Optim.}, 43(5):1676--1713, 2005.

\bibitem{grae:hors:sere:18}
P.~Graewe, U.~Horst, and E.~S\'er\'e.
\newblock Smooth solutions to portfolio liquidation problems under
  price-sensitive market impact.
\newblock {\em Stochastic Process. Appl.}, 128(3):979--1006, 2018.

\bibitem{grae:popi:21}
P.~Graewe and A.~Popier.
\newblock Asymptotic approach for backward stochastic differential equation
  with singular terminal condition.
\newblock {\em Stochastic Processes Appl.}, 133:247--277, 2021.

\bibitem{guea:16}
O.~Gu\'{e}ant.
\newblock {\em The financial mathematics of market liquidity}.
\newblock Chapman \& Hall/CRC Financial Mathematics Series. CRC Press, Boca
  Raton, FL, 2016.
\newblock From optimal execution to market making.

\bibitem{jean:mast:poss:15}
M.~Jeanblanc, T.~Mastrolia, D.~Possama{\"\i}, and A.~R{\'e}veillac.
\newblock Utility maximization with random horizon: a {BSDE} approach.
\newblock {\em Int. J. Theor. Appl. Finance}, 18(7):1550045, 43, 2015.

\bibitem{jean:reve:14}
M.~Jeanblanc and A.~R{\'e}veillac.
\newblock A note on {BSDE}s with singular driver coefficients.
\newblock In {\em Arbitrage, credit and informational risks}, volume~5 of {\em
  Peking Univ. Ser. Math.}, pages 207--224. World Sci. Publ., Hackensack, NJ,
  2014.

\bibitem{krus:popi:15}
T.~Kruse and A.~Popier.
\newblock Minimal supersolutions for {BSDEs} with singular terminal condition
  and application to optimal position targeting.
\newblock {\em Stochastic Processes and their Applications}, 126(9):2554 --
  2592, 2016.

\bibitem{ma:zhang:2002}
J.~Ma and J.~Zhang.
\newblock {Representation theorems for backward stochastic differential
  equations}.
\newblock {\em The Annals of Applied Probability}, 12(4):1390 -- 1418, 2002.

\bibitem{mastrolia:2015}
T.~Mastrolia, D.~Possama\"i, and A.~R\'eveillac.
\newblock On the {M}alliavin differentiability of {BSDE}s.
\newblock {\em Ann. Inst. Henri Poincar\'e{} Probab. Stat.}, 53(1):464--492,
  2017.

\bibitem{nual:06}
D.~Nualart.
\newblock {\em The {M}alliavin calculus and related topics}.
\newblock Probability and its Applications (New York). Springer-Verlag, Berlin,
  second edition, 2006.

\bibitem{pard:rasc:14}
E.~Pardoux and A.~Rascanu.
\newblock {\em Stochastic Differential Equations, Backward SDEs, Partial
  Differential Equations}, volume~69 of {\em Stochastic Modelling and Applied
  Probability}.
\newblock Springer-Verlag, 2014.

\bibitem{popi:06}
A.~Popier.
\newblock Backward stochastic differential equations with singular terminal
  condition.
\newblock {\em Stochastic Process. Appl.}, 116(12):2014--2056, 2006.

\bibitem{popi:17}
A.~Popier.
\newblock Integro-partial differential equations with singular terminal
  condition.
\newblock {\em Nonlinear Anal.}, 155:72--96, 2017.

\end{thebibliography}

\end{document}